\newtheorem{theorem}{Theorem}[section]
\newtheorem{lemma}[theorem]{Lemma}
\newtheorem{cor}[theorem]{Corollary}
\newtheorem{proposition}[theorem]{Proposition}
\theoremstyle{definition}
\newtheorem{remark}[theorem]{Remark}
\newtheorem{definition}[theorem]{Definition}
\newcommand{\R}{\mathbb{R}}
\newcommand{\N}{\mathbb{N}}
\newcommand{\PF}{\mathcal{L}}
\newcommand{\p}{\mathcal{P}}
\newcommand{\nivelh}{N(t,h)-\lfloor\mathcal{K}\log N(t,h) \rfloor}
\newcommand{\norm}[1]{\left|{#1}\right|}
\newcommand{\Nivel}{N-\lfloor\mathcal{K} \log N\rfloor}
\newcommand{\fc}{\mathbbm{1}}
\newcommand{\Ncerto}{N_3}
\begin{document}
\author{Amanda de Lima and Daniel Smania}

\address{
Departamento de Matem\'atica,
ICMC-USP, Caixa Postal 668,  S\~ao Carlos-SP,
CEP 13560-970
S\~ao Carlos-SP, Brazil}
\email{smania@icmc.usp.br\\
amandal@icmc.usp.br}
\urladdr{www.icmc.usp.br/$\sim$smania/}
\date{\today}
\title{Central limit theorem for the  modulus of continuity  of averages of observables on transversal families of piecewise expanding unimodal maps}
\begin{abstract}  
Consider a $C^2$  family of   mixing  $C^4$ piecewise expanding unimodal maps $t \in [a,b] \mapsto f_t$, with a critical point $c$,  that is transversal to the topological classes of such maps.  Given a Lipchitz  observable $\phi$  consider the function 
$$\mathcal{R_{\phi}}(t)= \int \phi \ d\mu_t,$$
where $\mu_t$ is the unique absolutely continuous invariant probability of $f_t$. Suppose that $\sigma_t > 0$   for every $t \in [a,b]$, where 
$$\sigma_t^2= \sigma_t^2(\phi) =\lim_{n\to \infty} \int
\left(\frac{\sum_{j=0}^{n-1}\left(\phi \circ f_t^j-\int \phi d\mu_t\right)}{\sqrt{n}}\right)^2 \, d\mu_t.$$ 

We  show that
$$
m \left\{t \in [a,b]\colon t+h \in [a,b] \text { and } \frac{1}{\Psi(t) \sqrt{-\log |h|}}\left(\frac{\mathcal{R_{\phi}}(t+h)-\mathcal{R_{\phi}}(t)}{h}\right) \le y\right\}$$
converges to 
$$\frac{1}{\sqrt{2\pi}} \int_{-\infty}^{y} e^{-\frac{s^2}{2}} ds, $$
 where $\Psi(t)$ is a dynamically defined function and $m$ is the  Lebesgue measure  on $[a,b]$, normalized in such way that $m([a,b])=1$. As a consequence we show that $\mathcal{R_{\phi}}$ is not a Lipchitz function on any subset of $[a,b]$ with positive Lebesgue measure. 
\end{abstract}

\subjclass[2010]{37C30, 37C40, 37D50, 37E05, 37A05}
\keywords{linear response, dynamical systems, unimodal maps, expanding maps, ergodic theory, central limit theorem}
\thanks{A.L. was partially supported by FAPESP 2010/17419-6 and D.S. was partially supported by CNPq 305537/2012-1}
\maketitle
\markright{CLT for the  modulus of continuity  of averages of observables in transversal families}
\setcounter{tocdepth}{1}
\tableofcontents

%----------------------------------------------%
%      Introduction and statement of the main results      %
%----------------------------------------------%

\section{Introduction and statement of the main  results}
Let $f_t$ be a smooth family of (piecewise) smooth maps on a manifold $M$,  and let us suppose that for each $f_t$ there is a physical (or SBR) probability $\mu_t$ on $M$.  Given an observable $\phi: M \to \R$, we can ask if the function 
$$
\begin{array}{cccl}
\mathcal{R_{\phi}} \; : & \! [0,1] & \! \longrightarrow & \! \R\\
& \! t & \! \longmapsto & \! \int \phi \ d\mu_t
\end{array}
$$
is differentiable and if we can find an explicit formula for its derivative.  The study of this question  is the so called  {\it linear response problem}.

D. Ruelle showed that $\mathcal{R_{\phi}}$ is differentiable and also gave the formula for $\mathcal{R_{\phi}}'$, in the case of smooth uniformly hyperbolic dynamical systems (see Ruelle in \cite{ruelle1} and \cite{ruelle2}, and Baladi and Smania in \cite{bs3} for more details).

In the setting of smooth families of piecewise expanding unimodal maps,   Baladi and Smania (see \cite{bs1}) proved that if we have a $C^2$ family of piecewise expanding unimodal maps of class $C^3$, then $\mathcal{R_{\phi}}$   is differentiable in $t_0$, with $\phi\in C^{1+ Lip}$,   {\it provided} that the family  $f_t$ is tangent to the topological class of $f_{t_0}$ at $t=t_0$. It turns out that the family $s\mapsto f_s$ is tangent to the topological class of $f_t$ at the parameter $t$  if and only if 
$$
J(f_t,v_t)= \sum_{k=0}^{M_t-1}\frac{v_t(f_t^k(c))}{Df_t^k(f_t(c))}=0,
$$
where $v_t =\partial_sf_s|_{s=t}$ and $M_t$ is either the period of the critical point $c$ if $c$ is periodic, or $\infty$, otherwise (see \cite{bs2}). Now, let us consider a $C^2$ family of piecewise expanding unimodal maps of class $C^4$ that is {\it transversal} to the topological classes of piecewise unimodal maps, that is 
\begin{equation}\label{eq_funcional}
J(f_t,v_t)= \sum_{k=0}^{M_t-1}\frac{v_t(f_t^k(c))}{Df_t^k(f_t(c))}\neq 0
\end{equation}
for every $t$.  

Baladi and Smania, \cite{bs1} and \cite{bs1b}, proved that $\mathcal{R_{\phi}}$ is not differentiable, for most of the parameters $t$, even if $\phi$ is quite regular. One can ask what is the regularity of the function $\mathcal{R_{\phi}}$ in this case. We know from Keller   \cite{KL3} (see also  Mazzolena \cite{mazzo}) that $\mathcal{R_{\phi}}$ has modulus of continuity $|h|(\log (1/|h|) + 1)$. 

We will show the  Central Limit Theorem for the modulus of continuity of the function $\mathcal{R_{\phi}}$ where $\phi$ is a lipschitzian observable. Let  
$$\sigma_t^2= \sigma_t^2(\phi) =\lim_{n\to \infty} \int
\left(\frac{\sum_{j=0}^{n-1}\left(\phi \circ f_t^j-\int \phi d\mu_t\right)}{\sqrt{n}}\right)^2 \, d\mu_t \neq 0.$$

Let  $t \mapsto f_t$ be a $C^2$ family of $C^4$ piecewise expanding unimodal maps. Note that each $f_t$ has a unique absolutely continuous invariant probability $\mu_t = \rho_t m$, where its density $\rho_t$ has bounded variation. Let 
\begin{equation}\label{defLt}
L_t = \int \log |Df_t| \ d\mu_t > 0, \  \ell_t= \frac{1}{\sqrt{L_t}}.
\end{equation}

Indeed $\rho_t$ is continuous except on the forward orbit $f^j_t(c)$  of the critical point  (see Baladi \cite{baladi}). Let $S_t $ be the jump of $\rho_t$ at the critical value, that is
\begin{equation}\label{defSt}
S_t = \lim_{x\rightarrow f_t(c)^-} \rho_t(x)- \lim_{x\rightarrow f_t(c)^+} \rho_t(x) =  \lim_{x\rightarrow f_t(c)^-} \rho_t(x) >0.
\end{equation}

\begin{theorem}\label{main} Let 
$$t \in [a,b] \mapsto f_t,$$ be a transversal $C^2$ family of  mixing  $C^4$ piecewise expanding unimodal maps
$$f_t \colon [0,1]\rightarrow [0,1].$$
 If  $\phi$ is a lipschitzian observable satisfying  $\sigma_t \neq 0$ for every $t\in [a,b]$, then for every $ y \in \mathbb{R}$
\begin{equation} \label{cltp}
\lim_{h\to 0} m \left\{t \in [a,b]\colon  t+h \in [a,b] \text{ and } \frac{1}{\Psi(t)\sqrt{-\log |h|}}\left(\frac{\mathcal{R_{\phi}}(t+h)-\mathcal{R_{\phi}}(t)}{h}\right) \le y\right\} 
\end{equation}
converges to 
$$\frac{1}{\sqrt{2\pi}} \int_{-\infty}^{y} e^{-\frac{s^2}{2}} ds,$$
where
$$
\Psi(t)=\sigma_t S_t J_t \ell_t.
$$
and $m$ is the Lebesgue measure normalized in such way that $m([a,b])=1$. 
\end{theorem}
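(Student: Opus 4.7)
The overall strategy is to reduce the finite difference quotient to a partial Birkhoff-type sum along the forward orbit of the critical point, and then to establish a CLT for that sum as the parameter $t$ is sampled by $m$. The reduction follows the linear-response analysis of Keller and of Baladi--Smania: to leading order, $\rho_{t+h}-\rho_t$ is a ``spike'' at the critical value of magnitude proportional to $h J_t S_t$, propagated by the transfer operator $\mathcal{L}_t$. Integrating against the Lipschitz observable $\phi$ should give
\begin{equation*}
\frac{\mathcal{R_{\phi}}(t+h)-\mathcal{R_{\phi}}(t)}{h}\approx J_tS_t\sum_{k=0}^{N(t,h)-1}\bigl(\phi(f_t^{k+1}(c))-\mathcal{R_{\phi}}(t)\bigr),
\end{equation*}
where $N(t,h)\asymp -\log|h|/L_t$ is the ``freeze time'' at which the first-order parameter perturbation $|\partial_t f_t^n(c)|\sim |J_t|\,|Df_t^{n-1}(f_t(c))|$ (from the transversality identity \eqref{eq_funcional}) reaches unit size. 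This explains both the $\ell_t$ in $\Psi(t)$, via $\sqrt{N(t,h)}\approx \ell_t\sqrt{-\log|h|}$, and the prefactor $S_tJ_t$.

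To make this rigorous, the error term must be shown to be $o(\sqrt{-\log|h|})$ uniformly in $t$ in a sense strong enough to preserve the limiting distribution. This requires: (a) bounding the tail $k>N(t,h)$ via the exponential decay of correlations coming from the spectral gap of $\mathcal{L}_t$ on $BV$, uniformly in $t$; (b) replacing $f_{t+h}^k(c)$ by $f_t^k(c)+h J_t\, Df_t^{k-1}(f_t(c))$ with errors that remain subdominant for $k\le N(t,h)$; and (c) using the Lipschitz regularity of $\phi$ together with joint continuity of $t\mapsto \rho_t$, $L_t$, $S_t$, $\sigma_t$, $J_t$ on $[a,b]$ to absorb the resulting discrepancies. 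The truncation $\nivelh$ defined in the preamble suggests that the authors split at a slightly smaller level than $N(t,h)$, the small correction $\lfloor \mathcal{K}\log N\rfloor$ providing the safety margin for the large-deviation-type tail estimates needed in (a).

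The central obstacle is then the CLT
\begin{equation*}
\frac{1}{\sigma_t\sqrt{N(t,h)}}\sum_{k=0}^{N(t,h)-1}\bigl(\phi(f_t^{k+1}(c))-\mathcal{R_{\phi}}(t)\bigr)\xrightarrow{d} \mathcal{N}(0,1)\quad \text{under }m,
\end{equation*}
which, combined with $\sqrt{N(t,h)}\approx \ell_t\sqrt{-\log|h|}$ and $\Psi(t)=\sigma_t S_t J_t \ell_t$, yields the theorem. The difficulty is that the sum runs along the \emph{deterministic} orbit of $c$, so all the randomness must come from $t$. Transversality supplies it: since $\partial_t f_t^n(c)$ grows at exponential rate $L_t$, intervals in $t$ of length $\sim e^{-nL_t}$ are mapped diffeomorphically by $t\mapsto f_t^n(c)$ onto macroscopic intervals of $[0,1]$, with Jacobian $\sim |J_t\, Df_t^{n-1}(f_t(c))|$ controlled by a bounded-distortion argument. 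The pushforward of $m$ under $t\mapsto f_t^n(c)$ is therefore close to $\mu_t$. Combining this ``equidistribution in parameter space'' with the CLT for $\phi$ under $(f_t,\mu_t)$ (which holds under the hypothesis $\sigma_t>0$, by Nagaev--Guivarc'h perturbation of transfer operators with a spectral gap) then yields the desired convergence. The quantitative implementation of this transfer -- passing from a CLT in the parameter to a CLT for the Birkhoff-like sum along the critical orbit, uniformly enough in $t$ that the continuous factor $\Psi(t)$ may be inserted inside the limit -- is the technical heart of the argument and is where the $C^4$ smoothness of $f_t$ and the $C^2$ regularity of the family are expected to be used.
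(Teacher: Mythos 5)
Your high-level reduction is essentially the right one: the paper's ``Wild part'' analysis (Propositions \ref{parteA}, \ref{parteB}, \ref{wild_part}) shows precisely that, for parameters in a good set, $\frac{\mathcal{R_{\phi}}(t+h)-\mathcal{R_{\phi}}(t)}{h} = s_1(t)J(f_t,v_t)\sum_{j=0}^{N_3(t,h)}\bigl(\phi(f_t^j(c))-\int\phi\,d\mu_t\bigr)+O(\log\log\frac{1}{|h|})$, which matches your heuristic. You also correctly identify that the randomness must come from the parameter $t$ and that transversality is what converts parameter measure into phase-space measure. However, there is a real gap in how you propose to obtain the CLT. ``Equidistribution of $t\mapsto f_t^n(c)$ combined with the dynamical CLT for $(f_t,\mu_t)$'' does not logically yield a CLT for the sequence $t\mapsto \sum_{k\le N}\bigl(\phi(f_t^k(c))-\int\phi\,d\mu_t\bigr)$: the dynamical CLT randomizes the initial point $x$ under $\mu_t$ with a fixed map $f_t$, whereas here the initial point is the fixed critical value and the map $f_t$ itself varies with $t$, so the two sources of randomness are fundamentally different and one cannot just ``combine'' the statements. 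This is exactly the content of Schnellmann's theorem in \cite{sch2}, which the paper does not re-derive but imports as an ASIP for the parameter-dependent random variables $\xi_i(t)=\sigma_t^{-1}\bigl(\phi(f_t^{i+1}(c))-\int\phi\,d\mu_t\bigr)$ (condition (IV) of Definition \ref{goodf} and Lemma \ref{fclt}). Re-proving that result via Nagaev--Guivarc'h and equidistribution is a substantial project in its own right, not a step one can wave at.

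The second gap is the random summation horizon. You write $\sqrt{N(t,h)}\approx\ell_t\sqrt{-\log|h|}$ as if this could simply be substituted inside the CLT, but $N(t,h)$ (and its refinement $N_3(t,h)$) is a $t$-dependent, $h$-dependent random time, and the normalization $\sigma_t S_t J_t\ell_t$ depends on $t$ as well. Making the substitution requires (i) a \emph{functional} CLT (Donsker/Wiener-process level, which is why the paper needs the ASIP rather than just an ordinary CLT), (ii) a Billingsley-type theorem for sums with random index (Proposition \ref{teo_princ}), (iii) the L\'evy modulus-of-continuity of Brownian motion to bound the time change, and (iv) repeated Slutsky-type lemmas (Lemma \ref{slutsky}) to replace $t$-dependent factors after first freezing $L_t$ on small subintervals and then reassembling via Proposition \ref{cut_good} and Lemma \ref{dec}. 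None of this is addressed by your plan. Finally, the error control in items (a)--(c) of your plan, while pointing in the right direction, actually requires the full machinery of Proposition \ref{main_prop} (the combinatorial control over the critical-orbit shadowing, which is where the $\log\log$ error term ultimately comes from), which is far more delicate than a generic large-deviation estimate.
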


\begin{cor} \label{cor_main} Under the same assumptions above, the function $\mathcal{R_{\phi}}$ is not a lipschitzian function on any subset of $[a,b]$ with positive Lebesgue measure. 
\end{cor}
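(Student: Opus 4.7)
The plan is a contradiction argument that leverages Theorem~\ref{main} directly. Suppose, for contradiction, that there exists a measurable set $A\subset [a,b]$ with $m(A)>0$ on which $\mathcal{R_{\phi}}$ is Lipschitz with some constant $L$. I would like to conclude that the rescaled difference quotient
$$
X_h(t):=\frac{1}{\Psi(t)\sqrt{-\log|h|}}\left(\frac{\mathcal{R_{\phi}}(t+h)-\mathcal{R_{\phi}}(t)}{h}\right)
$$
is forced to be small on a substantial portion of $A$, in contradiction with its asymptotic normality.

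First I would reduce to a compact sub-situation where $|\Psi|$ is bounded away from zero. Since $\sigma_t>0$, $S_t>0$, $J(f_t,v_t)\neq 0$ and $\ell_t>0$ for every $t\in[a,b]$, the function $|\Psi|$ is pointwise positive and measurable. The sets $\{t\in A\colon |\Psi(t)|\geq 1/n\}$ exhaust $A$ modulo null sets, so for $n$ large one obtains a subset $A_0\subset A$ with $m(A_0)>0$ on which $|\Psi|\geq \psi_0:=1/n>0$. Next, by continuity of translation in $L^1$, the sets $B_h:=\{t\in A_0\colon t+h\in A_0\}$ satisfy $m(B_h)\to m(A_0)$ as $h\to 0$, so for every sufficiently small $|h|$ we have $m(B_h)\geq 3m(A_0)/4$.

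On $B_h$, the Lipschitz bound yields $|\mathcal{R_\phi}(t+h)-\mathcal{R_\phi}(t)|\leq L|h|$, and therefore
$$
|X_h(t)|\leq \frac{L}{\psi_0\sqrt{-\log|h|}}\xrightarrow[h\to 0]{} 0.
$$
Now pick $\delta>0$ small enough that the standard normal mass $(2\pi)^{-1/2}\int_{-\delta}^{\delta} e^{-s^2/2}\,ds< m(A_0)/2$. For $|h|$ sufficiently small, the bound above guarantees $B_h\subset\{t\in[a,b]\colon t+h\in[a,b],\ |X_h(t)|\leq \delta\}$, so the latter set has $m$-measure at least $3m(A_0)/4$. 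On the other hand, Theorem~\ref{main} applied with $y=\delta$ and $y=-\delta$ (and subtraction) says that this same measure converges to $(2\pi)^{-1/2}\int_{-\delta}^{\delta}e^{-s^2/2}\,ds<m(A_0)/2$, a contradiction.

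The only conceptually non-routine point is the initial reduction to a set on which $|\Psi|$ is uniformly bounded below; but this requires nothing beyond measurability and the hypotheses of Theorem~\ref{main} that $\sigma_t,S_t,J_t,\ell_t$ never vanish. Everything else is the standard observation that Lipschitz regularity caps the difference quotient at $O(1)$ while the theorem predicts Gaussian fluctuations at scale $\sqrt{-\log|h|}$, which is incompatible with any concentration near zero on a positive-measure set.
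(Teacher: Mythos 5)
Your argument is correct, and it is genuinely simpler than the paper's route. The paper does not prove Corollary~\ref{cor_main} directly: it first establishes the stronger Corollary~\ref{cor_main2}, which asserts that for a.e.\ $t\in\Omega$ one has $\limsup_{h\to0^+} \frac{\mathcal{R_\phi}(t+h)-\mathcal{R_\phi}(t)}{h\sqrt{-\log|h|}}\fc_\Omega(t+h)=+\infty$ and the corresponding $\liminf=-\infty$, and then notes that the non-Lipschitz statement is an immediate consequence. The proof of Corollary~\ref{cor_main2} has to work with a \emph{fixed} threshold $K_2+1$, so it cannot make the normal mass $\mathcal{D}_{\mathcal{N}}(K_2+1)-\mathcal{D}_{\mathcal{N}}(-(K_2+1))$ small; instead it passes to a Lebesgue density point $t_0$ of the bad set $S$ and a small interval $I_\epsilon$ around it, where the relative density of $S$ exceeds $\mathcal{D}_{\mathcal{N}}(K_2+1)$, and then invokes Theorem~\ref{main} for the \emph{restricted} family on $I_\epsilon$ to get the contradiction. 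It also routes through Proposition~\ref{cut_good} to reduce to good transversal subfamilies so that Lemma~\ref{inf} provides $\inf_t|\Psi(t)|>0$. You avoid both the density-point localization and the subfamily reduction by observing that, under a Lipschitz hypothesis, the normalized difference quotient $X_h$ tends to $0$ uniformly on the Steinhaus set $B_h$, so you are free to shrink $\delta$ until $\mathcal{D}_{\mathcal{N}}(\delta)-\mathcal{D}_{\mathcal{N}}(-\delta)<m(A_0)/2$ and apply Theorem~\ref{main} on all of $[a,b]$. The trade-off is that your argument yields only the non-Lipschitz conclusion, whereas the paper's establishes the quantitatively sharper divergence of the modulus of continuity at rate $\sqrt{-\log|h|}$. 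One small point worth making explicit: your reduction to $A_0$ with $|\Psi|\ge\psi_0$ assumes $\Psi$ is measurable; this is true, but if you want to sidestep the discussion you could instead intersect $A$ with one of the good subintervals $[c_i,d_i]$ from Proposition~\ref{cut_good}, on which Lemma~\ref{inf} gives the uniform bound directly.
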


The proof of Corollary \ref{cor_main}  will be given  in the last section as a consequence of a stronger result (Corollary \ref{cor_main2}).

%----------------------------------------------%
%                                 Prelimiraries                                %
%----------------------------------------------%

\section{Families of piecewise expanding unimodal maps} \label{def_fam}
We begin this section by setting the one-parameter family of piecewise expanding unimodal maps.
\begin{definition}\label{def_peum}\index{piecewise expanding unimodal map}
A piecewise expanding $C^r$ unimodal map $f:[0,1] \to [0,1]$ is a continuous map with a critical point $c \in (0,1)$, $f(0)=f(1)=0$ and  such that $f|_{[0,c]}$ and $f|_{[c,1]}$ are $C^r$ and
$$
\norm{\frac{1}{Df}}_{\infty}< 1.
$$
\end{definition}

We say that $f$ is mixing if $f$ is topologically mixing on the interval $[f^2(c), f(c)]$. For instance,  if 
$$\inf_x |Df(x)| > \sqrt{2}$$
then  $f$ is not renormalisable. In particular $f$ is topologically mixing  on $[f^2(c),f(c)]$.

We can see the set of all $C^r$ piecewise expanding unimodal maps that share the same critical point $c \in (0,1)$ as a convex subset of the affine subspace $\{ f \in B^r\colon \ f(0)=f(1)\}$ of the Banach space $B^r$ of all continuous functions $f\colon [0,1]\rightarrow \mathbb{R}$ that are $C^r$ on the intervals $[0,c]$ and $[c,1]$, with the norm
$$|f|_r = |f|_\infty + |f|_{[0,c]}|_{C^r} + |f|_{[c,1]}|_{C^r}.$$

Let $f_t:[0,1] \to [0,1]$, $t \in [a,b]$ be a one-parameter family of piecewise expanding $C^4$ unimodal maps. We assume 
\begin{enumerate}
\item For all $t \in [a,b]$ the critical point of $f_t$ is $c$.
\item The maps $f_t$ are uniformly expanding, that is, there exist constants $1< \lambda \le \Lambda < \infty$ such that for all $t \in [a,b]$,
$$
\norm{\frac{1}{Df_t}}_{\infty}< \frac{1}{\lambda} \mbox{\;\;\; and \;\;\;} \norm{Df_t}_{\infty} < \Lambda.
$$
\item The map
$$t \in [a,b] \mapsto f_t \in B^4$$
is of class $C^2$.
\end{enumerate}

Each $f_t$ admits a unique absolutely continuous invariant probability measure $\mu_t$ and its density $\rho_t$ has bounded variation (see \cite{lasotayorke}).
By Keller  (see \cite{KL3}),
\begin{equation}\label{densidade_L1}
\norm{\rho_{t+h}-\rho_t}_{L^1} \le C |h| (\log\frac{1}{|h|}+1).
\end{equation}

\section{Good transversal families}

 It turns out that we can cut the parameter interval of a transversal family  $f_t$ in smaller intervals in such way that the family, when restricted to each one of those intervals satisfies  stronger assumptions. Here, we introduce the notation of partitions following Schnellmann in \cite{sch2}. Let us denote by $K(t)=[f^2_t(c),f_t(c)]$ the support of $\rho_t$.

Let $\p_j(t)$, $j \ge 1$ be the partition on the dynamical interval composed by the maximal open intervals of smooth monotonicity for the map $f_t^j : K(t) \to K(t)$, where $t$ is a fixed parameter value. Therefore, $\p_j(t)$ is the set of open intervals $\omega \subset K(t)$ such that $f_t^j : \omega \to K(t)$ is $C^4$ and $\omega$ is maximal.

We can also define analogous partitions on the parameter interval $[a,b]$. Let 
$$
\begin{array}{cccl}
x_0 \; : & \! [a,b] & \! \longrightarrow & \! [0,1]\\
& \! t & \! \longmapsto & \! f_t(c)
\end{array}
$$
be a $C^2$ map from the parameter interval into the dynamical interval.
We will denote by
$$
x_j(t):=f_t^j(x_0(t)),
$$
$ j\ge0$, the orbit of the point $x_0(t)$ under the map $f_t$.

Consider a interval $J \subset [a,b]$. Let us denote by $\p_j|J$, $j \ge 1$, the partition on the parameter interval composed by all open intervals $\omega$ in $J$ such that $x_i(t) \neq c$, for all $i$ satisfying $0 \le i < j$, that is 
$$f_t^i(x_0(t)) = f_t^{i+1}(c) \neq c,$$ 
for all $t \in \omega$, and such that $\omega$ is maximal, that is, if $s \in \partial \omega$, then there exists $0 \le i < j$ such that $x_i(s) =c$.

The intervals $\omega \in \p_j$ are also called cylinders.

We quote almost verbatim the definition of the Banach spaces $V_\alpha$ given in \cite{sch2}. The spaces $V_\alpha$ were introduced by Keller \cite{kellerg}. Let $m$ be the Lesbegue measure on the interval $[0,1]$

\begin{definition}{\bf (Banach space $V_{\alpha}$)} \label{bvalpha}
For every  $\psi\colon [0,1]\rightarrow \mathbb{R}$ be a function in $L^1(m)$ and $\gamma > 0$, we can define
$$
\mbox{osc\;}(\psi, \gamma, x) = \mbox{ess\,sup\;} \psi|_{(x-\gamma,x+\gamma)}- \mbox{\;ess\;inf\;}\psi|_{(x-\gamma,x+\gamma)}.
$$
Given  $A>0$ and $0<\alpha \le 1$ denote
$$
|\psi|_{\alpha} = \sup_{0<\gamma \le A} \frac{1}{\gamma^{\alpha}} \int_0^1 \mbox{osc\;}(\psi, \gamma, x)dx.
$$
The Banach space  $V_{\alpha}$ is the set of all $\psi\in L^1(m)$ such that $|\psi|_{\alpha} < \infty$, endowed with the norm
$$
|\!|\psi|\!|_{\alpha}=|\psi|_{\alpha} +\norm{\psi}_{L^1}. 
$$
\end{definition}
We quote almost verbatim the definition of the  almost sure invariant principle  given in \cite{sch2}.
\begin{definition}
Given a sequence of functions  $\xi_i$ on a probability  space, we say that  it   satisfies the {\bf almost sure invariance principle (ASIP)}, with exponent $\kappa < 1/2$   if one can construct a new probability space that has a sequence of functions $\sigma_i$, $i \ge 1$ and a representation of the Weiner process $W$  satisfying 
\begin{itemize}
\item We have
$$
\left| W(n) - \sum_{i=1}^{n} \sigma_i \right| = O(n^{\kappa}),
$$
almost surely as $n \to \infty$.
\item The sequences $\{\sigma_i\}_{i\ge 1}$ and $\{\xi_i\}_{i\ge 1}$ have identical distributions.
\end{itemize}
\end{definition}

A piecewise expanding $C^r$ unimodal map $f$ is {\it good} if either $c$ is not a periodic point of $f$ or 
$$\liminf_{x\rightarrow c} |Df^{p}(x)|\ge2.$$ where $p\ge 2$ is the prime period of $c$ (see \cite{bs1} and \cite{bs2} for more details).

\begin{definition} \label{goodf} \index{good transversal family}
A $C^2$ transversal (see equation (\ref{eq_funcional})) family of good mixing $C^4$ piecewise expanding unimodal maps $f_t$, $t \in [c,d]$ is a  {\bf good transversal family} if we can extend this family to a $C^2$ transversal family of good mixing $C^4$ piecewise expanding unimodal maps $f_t$, $t \in [c-\delta , d+\delta ]$, for some $\delta > 0$, with the following properties  
\begin{itemize}
\item[(I)]  There exists $j_0 > 0$ with the following property.  For every $t \in [c,d]$ and for each $j \ge j_0$ there exists a neighborhood $V$ of $t$ such that   for all $t' \in V\backslash \{t\}$ and all $0 <  i < j$, we have $f_{t'}^i(c) \neq c$. In particular the 
one-sided limits 
$$ \lim_{t'\to t ^+}\frac{\partial_{t'}f_{t'}^j(c)}{Df_{t'}^{j-1}(f_{t'}(c))} \text{\; \;and\; }  \lim_{t'\to t ^-}\frac{\partial_{t'}f_{t'}^j(c)}{Df_{t'}^{j-1}(f_{t'}(c))}$$
exist for every $j \ge j_0$, and there is $C \ge 1$ so that
\begin{equation}\label{const_t}
\frac{1}{C}\le\left| \lim_{t'\to t ^+}\frac{\partial_{t'}f_{t'}^j(c)}{Df_{t'}^{j-1}(f_{t'}(c))}\right| \le C,
\end{equation}
and
\begin{equation} \label{const_t2}
\frac{1}{C}\le\left| \lim_{t'\to t ^-}\frac{\partial_{t'}f_{t'}^j(c)}{Df_{t'}^{j-1}(f_{t'}(c))}\right| \le C,
\end{equation}
for all $j\ge j_0$ and $t \in [c-\delta , d+\delta ]$.
\end{itemize}
 
\begin{itemize}
\item[(II)] 
The  map $f_t$ is mixing and there are constants $\delta>0$, $L\ge 1$ and $0<\tilde{\beta}< 1$ such that for all $\psi \in V_\alpha$
\begin{equation}\label{eq_LY}
\left|\!\left|\PF_t^n \psi\right|\!\right|_{\alpha} \le L\tilde{\beta}^n\norm{\psi}_{\alpha} + L\norm{\psi}_{L^1},
\end{equation}
for all $t \in [c-\delta , d+\delta ]$. Here  $\mathcal{L}_t$ is  the Ruelle-Perron-Frobenious operator of  $f_t$ given by 
$$(\mathcal{L}_t\psi)(x)= \sum_{f_t(y)=x} \frac{1}{|Df_t(y)|} \psi(y).$$
\end{itemize}

\begin{itemize}
\item[(III)] 
There is $\delta > 0$ such that for every $\zeta> 0$ there is a constant $\tilde{C}$ satisfying
$$
\sum_{\omega \in \p_{n}|[a-\delta , b+\delta ]} \frac{1}{\norm{x_n'|_{\omega}}_{\infty}} \le \tilde{C}e^{n^{\zeta}}
$$
for all $n\ge 1$.
\end{itemize}

\begin{itemize}
\item[(IV)] For all $\varphi \in V_{\alpha}$ such that $\sigma_t(\varphi)>0$ the functions $\xi_i:[c-\delta , d+\delta ]\to \R$ $i\ge 1$, defined by
$$
\xi_i(t) = \frac{1}{\sigma_t(\varphi)}\left(\varphi(f_t^{i+1}(c))- \int \varphi d\mu_t\right)
$$
satisfy the ASIP for every exponent $\gamma>2/5$. 
\end{itemize}

\begin{itemize}
\item[(V)]  There are positive constants $\tilde{C}_1, \tilde{C}_2, \tilde{C}_3, \tilde{C}_4, \tilde{C}_5, \tilde{C}_6 $ and $\beta \in (0,1)$ such that  for every $t \in[c-\delta , d+\delta ]$ and its respective  density $\rho_t$ of the unique absolutely continuous invariant  probability of $f_t$
\begin{itemize}
\item[A$_1$.] The Perron-Frobenious operator $\mathcal{L}_t$ satisfies the Lasota-Yorke inequality in the space of bounded variation functions
$$|\mathcal{L}^k_t\phi |_{BV} \leq \tilde{C}_6 \beta^k |\phi|_{BV} + \tilde{C}_5 |\phi|_{L^1(m)}.$$
\item[A$_2$.] We have $\rho_t \in BV$ and $|\rho_t|_{BV}< \tilde{C}_1$.
\item[A$_3$.]  We have  $\rho'_t \in BV$ and  $|\rho'_t|_{BV}< \tilde{C}_2$. Moreover
$$\rho_t(x) = \int_0^x \rho'_t(u) \ du + \sum_{k=1}^{M_t-1} s_k(t) H_{f^k_t(c)}(x), $$
where $H_a(x)=0$ if $x<a$ and $H_a(x)=-1$ if $x\ge a$,
\begin{equation}\label{defs1}
s_1(t) =\frac{\rho_t(c)}{|Df_t(c-)|}+ \frac{\rho_t(c)}{|Df_t(c+)|}
\end{equation}
and
$$s_k(t) =\frac{s_1(t)}{Df_t^{k-1}(f_t(c))}.$$
Note that $S_t=s_1(t)$. 
\item[A$_4$.]  We have $\rho''_t \in BV$ and  $|\rho''_t|_{BV}< \tilde{C}_3$. Moreover
$$\rho'_t(x) = \int_0^x \rho_t''(u) \ du + \sum_{k=1}^{M_t-1} s_k'(t) H_{f^k_t(c)}(x), $$
where 
$$|s_k'(t)|\leq \frac{\tilde{C}_4}{|Df_t^{k-1}(f_t(c))|}.$$
\end{itemize}
\end{itemize}

\begin{itemize}
\item[(VI)] Let $j_0>0$ be the constant given by condition (I). For all $i,j$ satisfying $0\le i,j \le j_0$ and $t \in [c,d]$, such that $t+h \in [c-\delta, d + \delta]$ we have $$c\notin I_{i,j}(t,h),$$
where $I_{i,j}(t,h)$ is the smallest interval that contains the set
$$\{f_{t+h}^{i+j+1}(c),f_{t}^{i+j+1}(c),f_{t+h}^{i} \circ f_{t}^{j+1}(c), f_{t+h}^{i+1} \circ f_{t}^{j}(c)\}.$$
\end{itemize}

\end{definition}

\begin{remark} Conditions (I), (II) and (III) are exactly those that appears in Schnellmann \cite{sch2}, with obvious cosmetic modifications.\end{remark}

\begin{remark} If  $f_t$ is a good transversal family then of course  Eq. (\ref{cltp}) converges to
$$\frac{1}{\sqrt{2\pi}} \int_{-\infty}^{y} e^{-\frac{s^2}{2}} ds$$
if and only if 
\begin{equation} \label{clt2}
\lim_{h\to 0} m \left\{t \in [a,b]\colon   \frac{1}{\Psi(t)\sqrt{-\log |h|}}\left(\frac{\mathcal{R_{\phi}}(t+h)-\mathcal{R_{\phi}}(t)}{h}\right) \le y\right\} 
\end{equation}
converges to it as well. 
\end{remark}

\begin{proposition} \label{cut_good}  Let $f_t$, $t\in [a,b]$, be a transversal $C^2$  family of mixing $C^4$ piecewise expanding unimodal maps. Then there is a countable family of intervals  $[c_i,d_i] \subset [a,b]$, $i \in \Delta \subset \mathbb{N}$,  with pairwise disjoint interior and  
$$m([a,b]\setminus \bigcup_{i \in \Delta} [c_i,d_i])=0,$$
such that $f_t$ is a good transversal family on each $[c_i,d_i]$, $i\in \Delta$.
\end{proposition}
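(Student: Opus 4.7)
The strategy is to show that, apart from a countable set of exceptional parameters, every point of $[a,b]$ lies in an open interval on which conditions (I)--(VI) of Definition~\ref{goodf} all hold, and then to extract a countable family of closed subintervals covering $[a,b]$ up to a set of measure zero.

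First I would identify the exceptional set. Let $P \subset [a,b]$ be the set of parameters $t$ for which $c$ is periodic under $f_t$. A chain-rule computation yields
$$\partial_t f_t^p(c) = Df_t^{p-1}(f_t(c)) \cdot \sum_{k=0}^{p-1}\frac{v_t(f_t^k(c))}{Df_t^k(f_t(c))},$$
and at a parameter where $c$ has exact period $p$ (so $M_t=p$) the inner sum equals $J(f_t,v_t)$, which is nonzero by transversality~\eqref{eq_funcional}. Hence $t\mapsto f_t^p(c)-c$ has isolated zeros of each exact period $p$, so $P$ is countable. Fix $j_0$ as in Definition~\ref{goodf}(I) and let $P_0\subset P$ be the finite subset of parameters whose period is at most $2j_0+1$.

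Next, for $t_0\in(a,b)\setminus P_0$ I would construct an open interval $U_{t_0}\ni t_0$ with $\overline{U_{t_0}}\subset(a,b)$ on which the restricted family is a good transversal family. Conditions (II)--(IV) are exactly Schnellmann's estimates from~\cite{sch2}, whose proofs yield uniform constants on any closed parameter interval where transversality, mixing, and uniform expansion are preserved---all automatic by continuity of $t\mapsto f_t$ in $B^4$ and of $t\mapsto J(f_t,v_t)$. Condition (V), the BV regularity and jump decomposition of $\rho_t,\rho_t',\rho_t''$ from~\cite{bs1,bs2}, has constants uniformly controlled on compact subintervals where (II) already provides Lasota--Yorke uniformity. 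Condition (I) follows by shrinking $U_{t_0}$, using the isolatedness proved above: for each $j$, one arranges that no $t'\in\overline{U_{t_0}}\setminus\{t_0\}$ solves $f_{t'}^i(c)=c$ for some $0<i<j$; the existence of the one-sided limits and the bounds \eqref{const_t}--\eqref{const_t2} then follow from the chain-rule identity
$$\frac{\partial_{t'}f_{t'}^j(c)}{Df_{t'}^{j-1}(f_{t'}(c))} = \sum_{k=0}^{j-1}\frac{v_{t'}(f_{t'}^k(c))}{Df_{t'}^k(f_{t'}(c))},$$
the uniform two-sided bound on $|J(f_{t'},v_{t'})|$, and the exponential tail from uniform expansion. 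Condition (VI) requires $c\notin\{f_{t_0}^k(c):1\le k\le 2j_0+1\}$; this holds precisely because $t_0\notin P_0$ and persists with a uniform gap on $\overline{U_{t_0}}$ and for $|h|$ small, provided the extension parameter $\delta$ is chosen small enough.

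Finally, the family $\{U_{t_0}\}_{t_0\in(a,b)\setminus P_0}$ open-covers $(a,b)\setminus P_0$. By Lindel\"of I would extract a countable subcover and refine it into closed subintervals $[c_i,d_i]\subset(a,b)$ with pairwise disjoint interiors, for instance by splitting overlaps at a common interior point. Since $P_0\cup\{a,b\}$ is finite, $m([a,b]\setminus\bigcup_i[c_i,d_i])=0$ as desired. The main obstacle is the bookkeeping of uniform constants: one has to verify that all of $L,\tilde\beta,\tilde C,\tilde C_1,\dots,\tilde C_6,\beta$, and the $C$ of \eqref{const_t}--\eqref{const_t2} can be chosen simultaneously uniform on $\overline{U_{t_0}}$, which requires a careful tracing of dependencies in~\cite{sch2,bs1,bs2} but introduces no genuinely new analytic work.
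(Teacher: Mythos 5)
Your proposal follows essentially the same strategy as the paper: discard the (at most countable) set of parameters where the critical point is periodic, verify locally near each remaining parameter that conditions (I)--(VI) hold on small subintervals (citing Schnellmann for (I)--(IV), Baladi--Smania for (V), and a continuity argument for (VI)), and then extract a covering with pairwise disjoint interiors. The paper concludes with Vitali's covering theorem, which handles the disjoint-interiors bookkeeping more cleanly than your Lindel\"of-plus-``splitting overlaps'' step; otherwise the arguments coincide.
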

\begin{proof} 
Since $f_t$ is transversal, there is just a countable subset  $Q$ of parameters where $f_t$ has a periodic critical point. It is easy to see that the  subset $Q' \subset Q$  of parameters $t$ such that $f_t$ is not good and it has  a periodic critical point  is finite,  so without loss of generality we can assume that all maps $f_t$ are good.     Consider $\Omega = [a,b]\setminus (Q\cup \{a,b\})$. It  follows from the analysis in the proof of \cite[Theorem 4.1]{bs3} and  \cite[Proposition 3.3]{baladi} that for every $t' \in \Omega$ there  exists $\epsilon_1=\epsilon_1(t')$ such that if $[c,d] \subset (t'-\epsilon_1, t'+ \epsilon_1)$ then the family $f_t$ restricted to  $[c,d]$ satisfies condition $(V)$.  By Schnellmann \cite{sch2} for every $t' \in \Omega$  there  exists $\epsilon_2=\epsilon_2(t')$ such that if $[c,d] \subset (t'-\epsilon_2, t'+ \epsilon_2)$ then the family $f_t$ restricted to  $[c,d]$ satisfies conditions $(I)$, $(II)$, $(III)$ and $(IV)$.  

\noindent We claim that for every $t'\in \Omega$ there is $\epsilon_3=\epsilon_3(t')$ such that if $[c, d] \subset (t'-\epsilon_3, t'+ \epsilon_3)$ and $\delta > 0$ is small enough  then  the family $f_t$, with $t \in [c,d]$, satisfies condition  $(VI)$. Indeed, since $c$ is not a periodic point of $f_{t'}$,  there is $\epsilon_3(t')>0$ such that 
\begin{equation}\label{eq_cont}
\eta := \min\ \{|f_t^{i+j+1}(c) - c| : 0\le j \le j_0 \mbox{\; and \;} 0< i \le j_0, t \in  (t'-\epsilon_3, t'+ \epsilon_3)\} >0,
\end{equation}
Since $t \in [t'-\epsilon_3/2, t'+ \epsilon_3/2] \mapsto f_t$ is a $C^2$ family the map
$$
(t,h) \mapsto f_{t+h}^i(f_t^j(c)) 
$$
is continuous for every $0<i\le j_0$ and every  $j$ satisfying $0\le j \le j_0$. Therefore  there is $\gamma_1:=\gamma_1(i,j) < \epsilon_3/2$ such that, if $|h|\le \gamma_1$ and $t \in [t'-\epsilon_3/2, t'+ \epsilon_3/2]$, then 
$$
|f_{t+h}^{i+1}(f_t^{j}(c))-f_{t}^{i+1}(f_t^{j}(c))| \le \eta,
$$
and
$$
|f_{t+h}^{i}(f_t^{j+1}(c))-f_{t}^{i}(f_t^{j+1}(c))| \le \eta,
$$
for all $0\le j \le j_0$ and $0< i \le j_0$. Let $\gamma := \min\{\gamma_1(i,j) : 0\le j \le j_0 \mbox{\; and \;} 0< i \le j_0\}$.  In particular  if $|h|\le \gamma_1$ and $t \in [t'-\epsilon_3/2, t'+ \epsilon_3/2]$ then 
 $c \notin I_{i,j}(t,h)$ for all $0\le j \le j_0$, $0< i \le j_0$.

\noindent Let $\epsilon_4(t')=\min \{\epsilon_1(t'),  \epsilon_2(t'),  \gamma\}.$ Consider the family $\mathcal{F}$ of intervals $[c,d]\subset [a,b]$ such that $[c,d] \subset (t'-\epsilon_4(t'), t'+ \epsilon_4(t'))$ for some $t'\in \Omega$. By the Vitali's covering theorem there exists a  countable family of intervals  $[c_i,d_i] \subset [a,b]$, $[c_i,d_i]\in \mathcal{F}$, $i \in \Delta \subset \mathbb{N}$,  with pairwise disjoint interior and  
$$m([a,b]\setminus \bigcup_{i \in \Delta} [c_i,d_i])=m(\Omega \setminus \bigcup_{i \in \Delta} [c_i,d_i])=0.$$
\end{proof}

We will also need

\begin{lemma}\label{inf}  Let 
$$t \in [a,b] \mapsto f_t$$ be a good transversal $C^2$ family of good and mixing  $C^4$ piecewise expanding unimodal maps
$$f_t \colon [0,1]\rightarrow [0,1].$$
 If  $\phi$ is a lipschitzian observable satisfying  $\sigma_t \neq 0$ for every $t\in [a,b]$ then
$$\underline{J}= \inf_{t\in[a,b]} |J(f_t,v_t)|,\;\; \underline{\sigma} = \inf_{t\in[a,b]} \sigma_t(\phi),\;\; \underline{s}=\inf_{t\in [a,b]} S_t, \ \underline{\ell} =  \inf_{t\in [a,b]} \ell_t,$$
are positive, where $S_t$ and $\ell_t$ are as defined in Eqs. (\ref{defSt}) and (\ref{defLt}) respectively. Moreover $J(f_t,v_t)$ does not changes signs for $t \in [a,b]$.  In particular the function 
$$t \rightarrow \Psi(t)=\sigma_t S_t J_t \ell_t$$
does not change signs for $t \in [a,b]$ and satisfies 
$$\inf_t |\Psi(t)| > 0.$$
\end{lemma}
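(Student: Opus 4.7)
The plan is to prove positivity of each infimum by establishing continuity (or continuous extension via one-sided limits) of the underlying function in $t$ and invoking compactness of $[a,b]$ together with the pointwise non-vanishing hypothesis. Sign-constancy of $J(f_t,v_t)$ then follows because a continuous, nowhere-vanishing function on a connected interval has constant sign.

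The easiest bound is $\underline{\ell}>0$: since $|Df_t|$ is uniformly bounded between $\lambda$ and $\Lambda$, we have $\log\lambda\le L_t\le\log\Lambda$, so $\ell_t\in[1/\sqrt{\log\Lambda},1/\sqrt{\log\lambda}]$, and continuity of $L_t=\int\log|Df_t|\,d\mu_t$ follows from the $L^1$-continuity of $\rho_t$ (Eq.~\ref{densidade_L1}), continuity of $t\mapsto Df_t$, and the uniform $L^\infty$-bounds on $\log|Df_t|$. For $\underline{\sigma}>0$, I would expand
$$\sigma_t^2(\phi)=\int\bar\phi_t^2\,d\mu_t+2\sum_{n\ge 1}\int\bar\phi_t\cdot(\bar\phi_t\circ f_t^n)\,d\mu_t,\qquad \bar\phi_t=\phi-\int\phi\,d\mu_t,$$
via Green--Kubo; each correlation term is continuous in $t$ by Keller--Liverani stability applied to the Lasota--Yorke inequality (II), and the spectral gap gives decay $\tilde\beta^n$ uniformly in $t$, so the series converges uniformly and $\sigma_t$ is continuous. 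For $\underline{s}>0$, I would use the explicit formula
$S_t=\rho_t(c)\bigl(|Df_t(c^-)|^{-1}+|Df_t(c^+)|^{-1}\bigr)$
from (V).A$_3$; the derivative factor is bounded below by $2/\Lambda$ and continuous in $t$, while continuity and strict positivity of $\rho_t(c^-)$ follow from stability of the leading spectral projector of $\mathcal{L}_t$ combined with mixing (which makes $\rho_t$ positive on $[f_t^2(c),f_t(c)]$). Compactness of $[a,b]$ then yields each of $\underline{\sigma},\underline{s},\underline{\ell}>0$.

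For $\underline{J}>0$ and the sign statement I would first note the identity
$$\frac{\partial_{t}f_{t}^j(c)}{Df_{t}^{j-1}(f_{t}(c))}=\sum_{i=0}^{j-1}\frac{v_t(f_t^i(c))}{Df_t^i(f_t(c))},$$
so the partial sums of $J_t$ are exactly the quantities controlled by condition (I). Uniform expansion $|Df_t^i(f_t(c))|\ge\lambda^i$ and boundedness of $v_t$ give $|{\rm i}\text{-th term}|\le C\lambda^{-i}$, so the series converges uniformly on $[a,b]$. On the cofinite subset where $c$ is not periodic each term is continuous in $t$, so $J_t$ is continuous there; at the at-most-finitely-many exceptional parameters in $[a,b]$, condition (I) supplies uniformly bounded one-sided limits. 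Extending $|J_t|$ by those one-sided limits yields a continuous function on each closed subinterval between exceptional points which, by transversality, never vanishes; compactness then gives a uniform positive lower bound on each piece, and sign-constancy on each piece follows from connectedness. The global conclusions $\inf_t|\Psi(t)|>0$ and sign-constancy of $\Psi$ are immediate from $\Psi=\sigma_tS_tJ_t\ell_t$ together with positivity of $\sigma_t,S_t,\ell_t$. \emph{The main technical point} is handling $J_t$ at the exceptional parameters where $c$ is periodic: there $J_t$ is a finite sum that can differ from its one-sided limits, so one must lean on condition (I) to argue those limits are nonzero with matching signs, so that no flip occurs when crossing an exceptional parameter.
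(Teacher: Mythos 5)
Your overall strategy (pointwise nonvanishing plus some form of continuity plus compactness) is the right one, and your $\ell_t$ and $\sigma_t$ arguments are fine in spirit (the paper shortcuts the $\sigma_t$ part by citing Schnellmann's H\"older continuity of $t\mapsto\sigma_t$ directly). The gap is in the $J$ argument, which is exactly the part you flag as the main technical point but do not resolve. First, the set of parameters where $c$ is periodic for $f_t$ is only \emph{countable}, not finite, so there is no finite decomposition of $[a,b]$ into ``subintervals between exceptional points''; and even if it were discrete and hence locally finite, per-piece infima over infinitely many pieces do not combine into a global positive lower bound without a further argument. Second, condition (I) only gives you that the one-sided limits of the \emph{partial sums} are bounded and bounded away from zero; it does not, by itself, control the sign or the distance to zero of the full limit $J(f_t,v_t)$ at or near an exceptional parameter. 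The paper fills both gaps in one stroke by invoking the Baladi--Smania stability result for $J$ (if $f_n\to f$, $v_n\to v$ and $J(f_n,v_n)\to 0$ then $J(f,v)=0$; and if $J(f,v)\neq 0$ then $J(f_n,v_n)$ eventually has the same sign). With that, a minimizing sequence for $|J|$ converges to a parameter where $J$ would vanish, contradicting transversality, and sign-constancy follows because the sets $\{J>0\}$ and $\{J<0\}$ are both open and partition the connected interval. Your $S_t$ argument is also a little optimistic: you claim continuity of $t\mapsto\rho_t(c^-)$ from spectral stability, but stability gives $L^1$ (or $BV$-to-$L^1$) convergence, not pointwise evaluations; the paper instead runs a compactness argument using the decomposition $\rho_t=\rho_{abs,t}+\rho_{sal,t}$ with a uniform $BV$ bound on $\rho'_{abs,t}$: if $s_1(t_n)\to 0$ along $t_n\to t$, then $\rho_t$ would be continuous, contradicting $S_t\neq 0$.
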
 
\begin{proof} The function
$$t \mapsto J(f_t,v_t)$$
is {\it not} continuous in a transversal family (see \cite{bs2}). Indeed, its points of discontinuity lie on the parameters $t$ where the critical point $c$ is periodic for $f_t$, where this function have one-sided limits.  However, in \cite{bs2},  Baladi and Smania showed that if $v_n$ converges to $v$ and $f_n$ converges to $f$, then if  $J(f_n,v_n)\to0$ when $n\to \infty$ we have  $J(f,v)=0$ and  if $J(f,v)\neq 0$ then $J(f_n,v_n)$ has the same sign that $J(f,v)$ for $n$ large. From this it follows that $\underline{J} > 0$ and that $J(f_t,v_t)$ does not changes signs for $t \in [a,b]$. In \cite{sch2}, Schnellmann proved that $t \mapsto \sigma_t$ is H\"older continuous. Therefore, $\underline{\sigma} > 0$.  Note that $S_t=s_1(t) > 0$ everywhere, where $s_1$ is as defined in Eq. (\ref{defs1}). Suppose that $\lim_n s_1(t_n)=0$. Remember that (see \cite{bs1} and \cite{baladi}), 
\begin{equation}\label{comp1} \rho_{t_n} = \rho_{abs, t_n} + \rho_{sal,t_n} = \rho_{abs,t_n} + \sum_{k=1}^{M_{t_n}-1} \frac{s_1(t_n)}{Df^{k-1}_{t_n}(f_{t_n}(c))} H_{f^k_{t_n}(c)}\end{equation}
where $\rho_{abs, t_n}$ is absolutely  continuous, $(\rho_{abs,t_n})'$ has bounded variation and 
\begin{equation}\label{comp2} |(\rho_{abs,t_n})'|_{BV}\leq C.\end{equation} 
Taking a subsequence, if necessary, we can assume that $\lim_n t_n=t$  and that $\rho_{t_n}$  converges in $L^1(m)$  to $\rho_{ t}$. But if $\lim_n s_1(t_n)=0$ then by Eqs. (\ref{comp1}) and (\ref{comp2}) we conclude that $\rho_{t}$ is a continuous function. But  this is absurd since $s_1(t)\neq 0$ for every $t$. 
\end{proof}

\begin{remark}
As an example, we have the family of tent maps defined by
 \begin{equation*}
  f_t(x)= \left\{
    \begin{array}{ll}
      tx, & \text{if } x < 1/2,\\
      t-tx, & \text{if } x \ge 1/2,\\
     \end{array} \right.
\end{equation*}
$t \in (1,2)$. Tsujii \cite{Tsujii} show that the family of tent maps satisfies $$J(f_{t_0}, \partial_t f_t |_{t=t_0})\neq 0$$ at every  parameter $t_0$ where $f_{t_0}$ has  a periodic turning point. So the restriction of this family to a small neighborhood of such parameter $t_0$ is a  transversal family. We can observe that, since $f_t$ is a piecewise linear map for all $t$, the density $\rho_t$ is purely a saltus function.
\end{remark}

%----------------------------------------------%
%            Decomposition of the Newton quotient             %
%----------------------------------------------%

\section{Decomposition of the Newton quotient  for  good families}

In this section we will assume that $f_t$ is a good family.  In order to prove Theorem \ref{main} we will decompose the quotient
$$
\frac{\mathcal{R_{\phi}}(t+h) - \mathcal{R_{\phi}}(t)}{h}
$$
in two parts which  will be called the {\it Wild part}  and the {\it Tame part} of the decomposition.

\begin{definition}\label{def_projection}
Let $g: [0,1] \to \R$ be a function of bounded variation and $t\in [a,b]$. We define the projection
$$
\begin{array}{cccl}
\Pi_t \; : & \! BV & \! \longrightarrow & \! BV\\
& \! g & \! \longmapsto & \! g-\rho_t\int g dm.
\end{array}
$$
\end{definition}

Indeed  $\Pi_t$ is also a well defined operator in $L^1(m)$ and $$\sup_ t|\Pi_t|_{BV} < \infty   \ and \ \sup_ t|\Pi_t|_{L^1(m)}< \infty. $$ 
A function $g \in L^1(m)$ belongs to $\Pi_t(BV)$ if and only if $\int g \ dm =0$. In particular  the operator $(I-\PF_{t})^{-1}$ is well defined on $\Pi_t(BV)$.We are going to use the following observation quite often. If  $\int g \ dm =0$, and 
$$g=\sum_{i=0}^{\infty} g_i,$$
with $g_i  \in BV$ and the convergence of the series is in the BV norm, then
$$
(I-\PF_{t})^{-1}g = \sum_{i=0}^\infty (I-\PF_{t})^{-1}\Pi_t(g_i).
$$
Note also that
$$\Pi_t\circ \PF_{t}= \PF_{t}\circ \Pi_t.$$

\begin{proposition}\label{parteA}
Assume that $f_t$ is a family of piecewise expanding unimodal maps as defined in section \ref{def_fam} and let $\PF_{t}$ be the Perron-Frobenius operator. Then
$$
\frac{\rho_{t+h}-\rho_{t}}{h}= (I-\PF_{t+h})^{-1}\left(\frac{\PF_{t+h}(\rho_t)-\PF_{t}(\rho_t)}{h}\right).
$$
\end{proposition}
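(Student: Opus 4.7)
The strategy is an algebraic identity followed by an invertibility argument. The invariance of the densities gives $\mathcal{L}_t \rho_t = \rho_t$ and $\mathcal{L}_{t+h} \rho_{t+h} = \rho_{t+h}$. Using these two fixed-point equations, I would write
\begin{align*}
\rho_{t+h} - \rho_t &= \mathcal{L}_{t+h} \rho_{t+h} - \mathcal{L}_t \rho_t\\
&= \mathcal{L}_{t+h}(\rho_{t+h} - \rho_t) + (\mathcal{L}_{t+h} - \mathcal{L}_t)\rho_t,
\end{align*}
so that
$$
(I - \mathcal{L}_{t+h})(\rho_{t+h} - \rho_t) = \mathcal{L}_{t+h}\rho_t - \mathcal{L}_t\rho_t.
$$
Dividing by $h$ and applying $(I - \mathcal{L}_{t+h})^{-1}$ to both sides formally yields the claim.

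To justify the inversion, I would verify that both $\rho_{t+h} - \rho_t$ and $\mathcal{L}_{t+h}\rho_t - \mathcal{L}_t\rho_t$ lie in the domain $\Pi_{t+h}(BV)$ on which $(I - \mathcal{L}_{t+h})^{-1}$ is well defined. For the left-hand side, $\int(\rho_{t+h} - \rho_t)\,dm = 1 - 1 = 0$. For the right-hand side, using that $\mathcal{L}_s$ preserves integrals (it is the transfer operator),
$$
\int \bigl(\mathcal{L}_{t+h}\rho_t - \mathcal{L}_t\rho_t\bigr)\,dm = \int \rho_t\,dm - \int \rho_t\,dm = 0.
$$
Both functions belong to $BV$ by property (V)(A$_2$) and the fact that $\mathcal{L}_s$ maps $BV$ to $BV$; hence both sides are genuinely in $\Pi_{t+h}(BV)$, where $(I-\mathcal{L}_{t+h})^{-1}$ acts, as noted just after Definition~\ref{def_projection}.

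There is essentially no obstacle here beyond bookkeeping: the identity is a direct rearrangement of the two invariance equations, and the only nontrivial input is the spectral fact, already invoked in the paper, that $I - \mathcal{L}_{t+h}$ is invertible on the zero-mean subspace of $BV$ (which follows from the mixing hypothesis and the Lasota--Yorke estimate (V)(A$_1$) by standard quasi-compactness arguments). Dividing the established identity by $h \neq 0$ and applying $(I-\mathcal{L}_{t+h})^{-1}$ concludes the proof.
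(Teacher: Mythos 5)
Your proof is correct and uses essentially the same approach as the paper: rewrite $\rho_{t+h}-\rho_t$ via the fixed-point equations $\mathcal{L}_s\rho_s=\rho_s$ to obtain $(I-\mathcal{L}_{t+h})(\rho_{t+h}-\rho_t)=\mathcal{L}_{t+h}\rho_t-\mathcal{L}_t\rho_t$, then invert $(I-\mathcal{L}_{t+h})$ on the zero-mean BV subspace. You supply the zero-mean bookkeeping explicitly, which the paper omits (and indeed the paper's displayed identity has a stray sign, though the intended algebra is the one you carried out).
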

\proof
\noindent Note that $(I-\PF_{t})^{-1}$ is well defined in $\Pi_t(BV)$ and is given by
$$
(I-\PF_{t})^{-1}(\rho) = \sum_{i=0}^{\infty}\PF_{t}^i (\rho),
$$  
for every $\rho \in \Pi_t(BV)$.
Therefore, the result follows as an immediate consequence of the identity
$$
(I-\PF_{t+h})(\rho_{t+h}-\rho_t)=(I-\PF_{t+h})(\rho_t) - (I-\PF_{t})(\rho_t).
$$

\endproof

\begin{proposition}\label{parteB} 
Let $f_t$ be a $C^2$ family of good mixing  $C^4$ piecewise expanding unimodal maps that satisfies property (V) in Definition \ref{goodf}. There exists $C > 0$ with the following property. For every $t \in [a,b]$ such that the critical point of $f_t$ is not periodic, we can decompose
$$
\frac{\PF_{t+h}(\rho_t)-\PF_{t}(\rho_t)}{h} = \Phi_h + r_h
$$
where 
\begin{align*}
\Phi_h =& \frac{1}{h}\sum_{k=0}^{\infty}s_{k+1}(t)\Pi_{t+h}\left(H_{f_{t+h}(f_t^k(c))}- H_{f_{t}(f_t^k(c))}\right)
\end{align*}
and $r_h$ satifies
$$\int r_h dm =0 \mbox{\;\;\; and\;\;\;} \sup_{h\neq 0}\norm{r_h}_{BV}< C.$$
\end{proposition}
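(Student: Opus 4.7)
The plan is to use the saltus decomposition from hypothesis (V) $\mathrm{A}_3$. Since the critical point of $f_t$ is non-periodic we have $M_t=\infty$, so
\[
\rho_t(x) = R_t(x) + \sum_{k=1}^{\infty} s_k(t)\,H_{f_t^k(c)}(x),\qquad R_t(x):=\int_0^x \rho'_t(u)\,du.
\]
The function $R_t$ is Lipschitz (since $\rho'_t\in BV$ by $\mathrm{A}_4$), and uniform expansion gives $|s_k(t)|\le C\lambda^{-k}$, so the sum converges in BV. Applying $\PF_{t+h}-\PF_t$ term by term,
\[
\PF_{t+h}(\rho_t)-\PF_t(\rho_t) = (\PF_{t+h}-\PF_t)(R_t) + \sum_{k=1}^{\infty} s_k(t)\,(\PF_{t+h}-\PF_t)(H_{f_t^k(c)}).
\]
I would analyse each summand, extract its ``jump transport'' contribution, and show that the remainder has BV norm of order $|h|$, so that dividing by $h$ yields a uniformly BV-bounded $r_h$.

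For each $k\ge 1$ I use the inverse-branch formula $\PF_t(g)(x)=\sum_{\iota=\pm} g(\psi_{t,\iota}(x))|\psi_{t,\iota}'(x)|\mathbf{1}_{[0,f_t(c)]}(x)$. Since $f_t^k(c)\ne c$, the Heaviside $H_{f_t^k(c)}$ enters only one of the two inverse branches of $f_t$, so $\PF_t(H_{f_t^k(c)})$ has at most two jumps: a \emph{transport jump} at $f_t(f_t^k(c))=f_t^{k+1}(c)$ of magnitude $\pm|Df_t(f_t^k(c))|^{-1}$, and a \emph{boundary jump} at $f_t(c)$, present precisely when $f_t^k(c)\le c$. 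The analogous formula for $f_{t+h}$ yields a transport jump at $f_{t+h}(f_t^k(c))$ and a boundary jump at $f_{t+h}(c)$. Subtracting, multiplying by $s_k(t)$, and using $s_k(t)/|Df_t(f_t^k(c))|=\pm s_{k+1}(t)$, the leading transport contribution is $\pm s_{k+1}(t)\bigl(H_{f_{t+h}(f_t^k(c))}-H_{f_t^{k+1}(c)}\bigr)$, exactly the $k$-th summand of $\Phi_h$. The remainder has three parts: (a) the mismatch $|Df_{t+h}(f_t^k(c))|^{-1}-|Df_t(f_t^k(c))|^{-1}=O(|h|)$, by the $C^2$ dependence $t\mapsto f_t$, multiplying a Heaviside; (b) the difference of the ``other branch'' continuous pieces, of BV norm $O(|h|)$ again by $C^2$-regularity; and (c) the boundary jumps at $f_t(c)$ and $f_{t+h}(c)$. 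After multiplying by $s_k(t)$ and summing over $k\ge 1$, items (a) and (b) contribute a BV error of total norm $O(|h|)$; item (c) requires separate treatment.

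The $k=0$ summand of $\Phi_h$ emerges from combining item (c) with $(\PF_{t+h}-\PF_t)(R_t)$. Since $R_t$ is Lipschitz, a direct computation gives $(\PF_{t+h}-\PF_t)(R_t) = R_t(c)A(t)\bigl(H_{f_{t+h}(c)}-H_{f_t(c)}\bigr) + O(|h|)$ in BV, where $A(t):=|Df_t(c^-)|^{-1}+|Df_t(c^+)|^{-1}$ and the leading term captures the boundary jumps $-R_t(c)A(t+h)$ of $\PF_{t+h}(R_t)$ at $f_{t+h}(c)$ and $-R_t(c)A(t)$ of $\PF_t(R_t)$ at $f_t(c)$. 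Summing item (c) over $k\ge 1$ produces $-A(t)\bigl(\sum_{k:\,f_t^k(c)\le c} s_k(t)\bigr)(H_{f_{t+h}(c)}-H_{f_t(c)})+O(|h|)$ (the sign opposite to $R_t$'s arises because $H_{f_t^k(c)}(c)=-1$ when $f_t^k(c)\le c$). Evaluating the saltus decomposition of $\rho_t$ at $x=c$ yields the key identity
\[
\rho_t(c) = R_t(c) - \sum_{k:\,f_t^k(c)\le c} s_k(t),
\]
so the two boundary contributions combine into $\rho_t(c)A(t)(H_{f_{t+h}(c)}-H_{f_t(c)}) = s_1(t)(H_{f_{t+h}(c)}-H_{f_t(c)})$, precisely the $k=0$ term of $\Phi_h$.

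Collecting everything gives
\[
\frac{\PF_{t+h}(\rho_t)-\PF_t(\rho_t)}{h} = \frac{1}{h}\sum_{k=0}^{\infty} s_{k+1}(t)\bigl(H_{f_{t+h}(f_t^k(c))} - H_{f_t(f_t^k(c))}\bigr) + \tilde r_h,
\]
with $\sup_h|\tilde r_h|_{BV}<\infty$. To replace the raw Heaviside differences by their $\Pi_{t+h}$-projections appearing in $\Phi_h$, one absorbs the additive correction $\frac{1}{h}\sum_{k\ge 0} s_{k+1}(t)\bigl(f_{t+h}(f_t^k(c))-f_t(f_t^k(c))\bigr)\rho_{t+h}$ into the remainder; this is uniformly BV-bounded because $|f_{t+h}-f_t|_\infty = O(|h|)$, $\sum_{k}|s_{k+1}(t)|<\infty$, and $|\rho_{t+h}|_{BV}\le \tilde C_1$ by $\mathrm{A}_2$. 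Setting $r_h$ equal to $\tilde r_h$ plus this correction gives the decomposition; the condition $\int r_h\,dm=0$ follows automatically, since the LHS has zero integral (integral-preservation under $\PF$) and $\Phi_h$ does too by construction of $\Pi_{t+h}$. The main technical hurdle is the boundary-jump bookkeeping in the third paragraph: the individual pieces in (c) are of size $O(|s_k(t)|)$ per $k$, not $O(|h|)$, and naive summation would blow up after dividing by $h$; the $O(|h|)$ remainder only appears \emph{after} the algebraic collapse via the identity for $\rho_t(c)$ merges them with the boundary contribution from $R_t$.
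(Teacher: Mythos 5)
Your proposal is correct in its broad logic, but it takes a genuinely different route from the paper's proof. The paper also starts from the saltus decomposition $\rho_t=(\rho_t)_{abs}+(\rho_t)_{sal}$ of $A_3$, but it then differentiates once more: it writes $(\PF_{t+h}\rho_t)_{abs}-(\PF_t\rho_t)_{abs}=A_{t,h}+B_{t,h}$, controls $A_{t,h}$ by expressing $(\PF_t\rho_t)''$ through push-forwards of measures built from $\rho_t''$, $\rho_t'$, $\rho_t$ and $Df_t,D^2f_t,D^3f_t$, controls $B_{t,h}$ by push-forwards of atomic measures built from the jumps $s_k'(t)$ of $\rho_t'$, and estimates both via Proposition~\ref{lemma_delta}. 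The saltus part of $\PF_{t+h}\rho_t$ is then simply written down (one transport jump at $f_{t+h}(f_t^k(c))$ per $k$ and a boundary jump at $f_{t+h}(c)$ of weight $\rho_t(c)\bigl[|Df_{t+h}(c-)|^{-1}+|Df_{t+h}(c+)|^{-1}\bigr]$), and $\tilde S_{11},\tilde S_2,\tilde S_3$ are bounded directly. Your version instead applies $\PF_{t+h}-\PF_t$ Heaviside by Heaviside via the inverse-branch formula, records for each $k$ a transport jump and a possible boundary jump at $f_\cdot(c)$, and notes that these boundary jumps, each of the wrong order $O(|s_k(t)|)$, only collapse to the right order after summing and merging with the boundary contribution of $R_t$ through the identity $\rho_t(c)=R_t(c)-\sum_{k:\,f_t^k(c)\le c}s_k(t)$; this both produces the $k=0$ term and \emph{explains} why the paper's boundary jump carries the coefficient $\rho_t(c)$. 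Your route also avoids the second-derivative machinery entirely: at the level of detail given you use only $A_3$ ($\rho_t'\in BV$), whereas the paper's $A_{t,h}$ estimate genuinely uses $A_4$ ($\rho_t''\in BV$).

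Two small corrections. Where you write ``since $\rho'_t\in BV$ by $A_4$'' you mean $A_3$ (it is $A_4$ that concerns $\rho''_t$). And the justification ``Since $R_t$ is Lipschitz, a direct computation gives $(\PF_{t+h}-\PF_t)(R_t)=R_t(c)A(t)(H_{f_{t+h}(c)}-H_{f_t(c)})+O(|h|)$ in $BV$'' over-credits Lipschitzness: bounding the \emph{variation} of the remainder by $O(|h|)$ requires $R'_t\in BV$, not just $R'_t\in L^\infty$, since away from the boundary jump one must estimate $\int|R'_t(\psi_{t+h,\iota}(x))-R'_t(\psi_{t,\iota}(x))|\,dx$ by an $L^1$-shift bound for BV functions. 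That is available from $A_3$ because $R'_t=\rho'_t$; just cite the correct reason.
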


We will prove Proposition \ref{parteB} in Section \ref{sec_tame}. We will call $\mathcal{W}(t,h)=(I-\PF_{t+h})^{-1}\Phi_h$ the {\it Wild part}  and $(I-\PF_{t+h})^{-1} r_h$ will be called the {\it Tame  part } of the decomposition.  Note that 

$$
\frac{\mathcal{R}_{\phi}(t+h)-\mathcal{R}_{\phi}(t)}{h}= \int \phi \mathcal{W}(t,h)\ dm + \int \phi (I-\mathcal{L}_{t+h})^{-1}r_h \ dm.
$$

\begin{definition}Given  $h\neq 0$ and $t \in [0,1]$, let $N:=N(t,h)$ be the unique integer such that
\begin{equation}\label{defNth}
\frac{1}{|Df_t^{N+1}(f_t(c))|}\le |h| <  \frac{1}{|Df_t^{N}(f_t(c))|}.
\end{equation}
There is some ambiguity in the definition of $N(t,h)$  when $f_t^k(c)=c$ for some $k >0$. But since the family is transversal, there exists just a countable number of such parameters (see \cite{bs2}).
\end{definition} 

Given $a \in \mathbb{R}$ define 
$$ \lfloor  a \rfloor= \max \{ k \in \mathbb{Z}\colon \ a \geq k\}.$$ 
The following proposition gives us a control on the orbit of the critical point.

\begin{proposition}\label{main_prop}  
For large $\mathcal{K} > 0$ and  every $\gamma > 0$ there exists $\delta > 0$ such that for every  small  $h_0$ there are sets  $\Gamma^\delta_{h',h_0}, \Gamma^\delta_{h_0} \subset I=[a,b]$, with  $\Gamma^\delta_{h',h_0}\subset \Gamma^\delta_{h_0}$, for every  $h'$ satisfying $0< h'< h_0$, with the following properties
\begin{itemize}
\item[A.]  $\lim_{h' \rightarrow 0} m(\Gamma^\delta_{h',h_0})=  m(\Gamma^\delta_{h_0}) > 1-\gamma$.
\item[B.] If $t \in  \Gamma^\delta_{h',h_0}$ and $|h|\leq h'$ then there exists $N_3(t,h)$ such that 
\begin{equation}\label{est_N3} \lfloor  \frac{\mathcal{K}}{2} \log N(t,h) \rfloor  \leq N(t,h)-N_3(t,h)\leq C_5 \mathcal{K} \log N(t,h)\end{equation} 
and 
\begin{equation}\label{semc1} c \notin I_{i,j}
\end{equation}
for all $0\le j<  N_3(t,h)$ and $0\le i <  N_3(t,h)-j$, where $I_{i,j}$ is the smallest interval that contains the set
$$\{  f_{t+h}^{i+j+1}(c), f_{t}^{i+j+1}(c), f_{t+h}^i\circ f_t^{j+1}(c), f_{t+h}^{i+1}\circ f_{t}^{j}(c)\}$$
\item[C.] For every $t \in \Gamma^\delta_{h',h_0}$ the critical point of $f_t$ is not periodic. 
\item[D.] If $0< \hat{h} < h' \leq h_0$ then $\Gamma^\delta_{h',h_0}\subset \Gamma^\delta_{\hat{h},h_0}$, 
\end{itemize}
where $m$ is the normalized Lebesgue measure on $I=[a,b]$.
\end{proposition}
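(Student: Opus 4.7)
The plan is to pick $\Gamma^\delta_{h_0}$ as the set of non-periodic parameters on which the forward orbit of $c$ under $f_t$ stays quantitatively away from $c$ at every relevant scale, and then to check by a shadowing estimate that this forces $c\notin I_{i,j}(t,h)$ throughout the required range. The shadowing step is standard: using uniform $C^2$-control of $t\mapsto f_t$ and $|Df_t|\le\Lambda$, a telescoping argument gives, for $k=i+j+1\le N(t,h)$ and provided the intermediate iterates stay inside a common smoothness branch (a condition bootstrapped from condition (VI) for small $k$),
$$\max\bigl\{|f_{t+h}^{i+j+1}(c)-f_t^{k}(c)|,\,|f_{t+h}^{i}\!\circ\! f_t^{j+1}(c)-f_t^{k}(c)|,\,|f_{t+h}^{i+1}\!\circ\! f_t^{j}(c)-f_t^{k}(c)|\bigr\}\le C|h|\,|Df_t^{k-1}(f_t(c))|.$$
By the definition (\ref{defNth}) the right-hand side is at most $\lambda^{-(N-k+1)}$; if $k\le N_3:=N-\lfloor\mathcal{K}_1\log N\rfloor$ with $\mathcal{K}_1\in[\mathcal{K}/2,\,C_5\mathcal{K}]$, this is bounded by $CN^{-\mathcal{K}_1\log\lambda}$, so the four points of $I_{i,j}$ all lie within that distance of $f_t^{k}(c)$ and hence $c\notin I_{i,j}$ as soon as $|f_t^{k}(c)-c|$ exceeds that threshold.

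For the parameter-space control, on each cylinder $\omega\in\mathcal{P}_{k-1}|[a-\delta,b+\delta]$ the map $t\mapsto x_{k-1}(t)=f_t^{k}(c)$ is $C^2$ and monotone with uniformly bounded distortion---this is exactly what condition (I) together with transversality (\ref{eq_funcional}) provides (cf.\ Schnellmann~\cite{sch2}). Combined with (III), this yields, for any $\zeta>0$,
$$m\{t\in[a,b]:|f_t^k(c)-c|<\epsilon\}\le C\epsilon\sum_{\omega\in\mathcal{P}_{k-1}}\frac{1}{\|x_{k-1}'|_\omega\|_\infty}\le C\tilde C\epsilon\, e^{k^\zeta}.$$
Fix $\mathcal{K}$ large and $\zeta>0$ small, chosen quantitatively so that summing the above bound with $\epsilon=CN^{-\mathcal{K}_1\log\lambda}$ over the $O(\log N)$ binding $k$-values $k\in[N-C_5\mathcal{K}\log N,\,N-\lfloor\mathcal{K}/2\log N\rfloor]$, and then over $N\ge N(t,h_0)$, produces a bad set of total measure less than $\gamma/2$. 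Intersect the complement with the co-countable set of non-periodic parameters (only countably many $t$ have periodic critical point, \cite{bs2}) and with the (full-measure) set where (VI) applies; this is $\Gamma^\delta_{h_0}$, with $m(\Gamma^\delta_{h_0})>1-\gamma$. For $t\in\Gamma^\delta_{h_0}$ the explicit choice $N_3(t,h)=N(t,h)-\lfloor\mathcal{K}\log N(t,h)\rfloor$ satisfies (\ref{est_N3}) and, by the shadowing step combined with (VI) for $k\le j_0+1$, also (\ref{semc1}).

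Finally, let $\Gamma^\delta_{h',h_0}\subset\Gamma^\delta_{h_0}$ be the subset on which the defining inequalities already hold for every $|h|\le h'$; since $N(t,h)\to\infty$ uniformly in $t$ as $h\to 0$, these sets increase as $h'\to 0$ and exhaust $\Gamma^\delta_{h_0}$, giving (A) and (D); (C) is immediate from the removal of periodic parameters. \emph{Main obstacle.} The delicate point is the quantitative balance between the subexponential slack $e^{k^\zeta}$ in (III) and the exponential gain $\lambda^{-(N-k+1)}$ from shadowing: $\mathcal{K}$ must be chosen large enough (with $\zeta$ correspondingly small) that the measure bound summed across all binding $(N,k)$-pairs stays below $\gamma$. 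The bounded-distortion control of $x_{k-1}$ on cylinders, which uses conditions (I) and (V) in essential ways, is the other place where the good-family structure does its real work.
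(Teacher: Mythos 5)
The central measure estimate in your plan does not close, and the gap is quantitative, not cosmetic. You propose to bound the bad set at scale $k$ by $m\{t:|f_t^k(c)-c|<\epsilon_k\}\le C\tilde C\,\epsilon_k\,e^{k^\zeta}$, invoking condition (III), with $\epsilon_k\sim\lambda^{-(N-k)}\le CN^{-\mathcal{K}_1\log\lambda}$ when $k$ is in the binding range $[N-C_5\mathcal{K}\log N,\,N-\lfloor(\mathcal{K}/2)\log N\rfloor]$. But for such $k$ we have $k\sim N$, so the bound reads $CN^{-\mathcal{K}_1\log\lambda}e^{N^\zeta}$, and the product diverges as $N\to\infty$ for every $\zeta>0$ and every $\mathcal{K}_1>0$: a stretched exponential $e^{N^\zeta}$ always beats a polynomial $N^{-\mathcal{K}_1\log\lambda}$. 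Taking $\mathcal{K}$ large and $\zeta$ small cannot fix this, because $\zeta$ must be strictly positive for (III) to apply. So the total bad measure is not small --- it is not even finite --- and your $\Gamma^\delta_{h_0}$ cannot be produced this way. This is precisely why the paper does not estimate $\{t:|f_t^k(c)-c|<\epsilon\}$ directly from (III). Instead, it introduces the combinatorially defined families $E_{N,J}$, transfers them to the phase space of $f_{t_R}$ via the map $\mathcal{U}_J$ (Lemma~\ref{phase_par}), and shows by a dyadic counting argument (Lemma~\ref{lemma_card_ENJ}) that each $\eta'\in\p_{N-\lfloor\mathcal{K}\log N\rfloor}(t_R)$ contains at most $2^{\lfloor\mathcal{K}\log N/\tau\rfloor+1}$ bad cylinders of $\p_N(t_R)$, each of length $\le\lambda^{-\lfloor\mathcal{K}\log N\rfloor}$ after pushing forward by $f_{t_R}^{N-\lfloor\mathcal{K}\log N\rfloor}$. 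Together with the uniform density bounds this yields $|E_N|\lesssim N^{\hat C_2-\hat C_1\mathcal{K}'}$, a genuinely polynomial and summable estimate, with no $e^{N^\zeta}$ loss. This combinatorial counting is the essential idea your approach is missing.

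A second, less severe gap: your shadowing estimate requires that the intermediate iterates stay in a common monotonicity branch, and you say this is ``bootstrapped from condition (VI) for small $k$.'' For large $k$ this is exactly the statement you are trying to prove, so it must be set up as a careful induction. The paper does this in Lemma~\ref{lemmaB}, and the induction works only under the stronger hypothesis that the whole interval $[t,t+h]$ is \emph{deep inside} a parameter cylinder $\omega\in\p_{N_3}$, i.e.\ $\mathrm{dist}(t,\partial\omega)>(M+1)|h|$ with $M$ chosen to dominate the distortion constants in Eq.~(\ref{def_M}). Merely knowing that $|f_t^k(c)-c|$ is not too small (a pointwise statement about the single parameter $t$) is not enough; you need the cylinder geometry to control the orbit of $c$ over the whole interval of parameters $[t,t+h]$ at once, which is why the paper then needs the additional Borel--Cantelli argument of Lemma~\ref{lema_M} over the geometric sequence $h_i=(1-\epsilon)^i h_0$ to establish property~A. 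Your construction of $\Gamma^\delta_{h',h_0}$ as ``the subset where the inequalities hold for every $|h|\le h'$'' skips this step and does not obviously yield the monotonicity in $h'$ needed for properties~A and~D.
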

We will prove Proposition \ref{main_prop} in Section \ref{sec_controle}. The following proposition is one of the most important results in this work.  It relates the Birkhoff sum of  the observable $\phi$ with  the Wild part. This fact will allow us to use  the almost sure invariance principle obtained by  Schnellmann \cite{sch2}.

\begin{proposition}\label{wild_part} 
Let $f_t$ be a good transversal family.  Let  $\phi: [0,1] \to \R$ be a lipschitzian observable. If $t \in \Gamma_{h,h_0}^\delta$, where $\Gamma_{h,h_0}^\delta$ is the set given by Proposition \ref{main_prop}, then
$$
\int \phi \, \mathcal{W}(t,h) dm = s_1(t)J(f_t,v_t)\sum_{j=0}^{N_3(t,h)}\left(\phi(f_t^{j}(c))-\int \phi d\mu_t\right) + O\left(\log \log \frac{1}{|h|}\right).
$$
\end{proposition}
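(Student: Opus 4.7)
The plan is to open
\[
\mathcal{W}(t,h)=(I-\mathcal{L}_{t+h})^{-1}\Phi_h=\sum_{i\geq 0}\mathcal{L}_{t+h}^{i}\Phi_h
\]
together with the $k$-series defining $\Phi_h$, and to analyze the resulting double sum of atomic contributions. With $a_{k}=f_{t+h}(f_t^k(c))$ and $b_k=f_t^{k+1}(c)$, the duality $\int\phi\,\mathcal{L}_{t+h}^{i}g\,dm=\int(\phi\circ f_{t+h}^{i})g\,dm$, the identity $\int(H_{a_k}-H_{b_k})\,dm=a_k-b_k$, and the invariance of $\mu_{t+h}$ give
\[
T_{i,k}:=\frac{s_{k+1}(t)}{h}\Bigl(\int_{b_k}^{a_k}(\phi\circ f_{t+h}^{i})(x)\,dx-(a_k-b_k)\int\phi\,d\mu_{t+h}\Bigr).
\]
I would split the double sum into a principal block $\{i+k<N_3(t,h)\}$ and a remainder block $\{i+k\geq N_3\}$.

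In the principal block, assertion (\ref{semc1}) of Proposition \ref{main_prop} guarantees that $f_{t+h}^{i}$ is $C^{4}$ on $[b_k,a_k]$, and bounded distortion on its monotone branch makes $\sup_{[b_k,a_k]}|Df_{t+h}^{i}|$ comparable to $|Df_t^{i}(f_t^{k+1}(c))|=|Df_t^{i+k}(f_t(c))|/|Df_t^{k}(f_t(c))|$. Using this together with $a_k-b_k=hv_t(f_t^k(c))+O(h^{2})$ from the $C^{2}$-regularity, Keller's estimate (\ref{densidade_L1}) to replace $\int\phi\,d\mu_{t+h}$ by $\int\phi\,d\mu_t$ at cost $O(|h|\log(1/|h|))$, and the orbit-comparison bound $|f_{t+h}^{i}(b_k)-f_t^{i+k+1}(c)|\leq C|h|\,|Df_t^{i+k}(f_t(c))|/|Df_t^{k+1}(f_t(c))|$ derived from the chain-rule identity above, each $T_{i,k}$ becomes $s_{k+1}(t)v_t(f_t^k(c))[\phi(f_t^{i+k+1}(c))-\int\phi\,d\mu_t]$ modulo an error of order $|h|\,|Df_t^{i+k}(f_t(c))|/|Df_t^{k}(f_t(c))|^{2}$. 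Summing these errors—first in $k$ via $\sum_k\lambda^{-2k}<\infty$, then in $j=i+k$ via the bound $|h|\,|Df_t^{N_3-1}(f_t(c))|\leq\lambda^{-(N-N_3)}\leq N^{-\mathcal{K}\log\lambda/2}$ from (\ref{est_N3})—gives a total of $O(1)$ once $\mathcal{K}$ is large enough. Setting $j=i+k+1$ and using $s_{k+1}(t)=s_1(t)/Df_t^k(f_t(c))$, the inner $k$-sum telescopes into $s_1(t)\sum_{k=0}^{j-1}v_t(f_t^k(c))/Df_t^k(f_t(c))=s_1(t)J(f_t,v_t)+O(\lambda^{-j})$; the bounded factor $\phi(f_t^j(c))-\int\phi\,d\mu_t$ turns the $\lambda^{-j}$-tails into $O(1)$ and the $j=0$ term is itself $O(1)$, so the principal block delivers exactly $s_1(t)J(f_t,v_t)\sum_{j=0}^{N_3(t,h)}[\phi(f_t^j(c))-\int\phi\,d\mu_t]+O(1)$.

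The main obstacle is the remainder block $\{i+k\geq N_3\}$, where iterates of $f_{t+h}^{i}$ on $[b_k,a_k]$ may cross a critical point and the explicit Lipschitz expansion breaks down. Here I would exploit that $\Pi_{t+h}(H_{a_k}-H_{b_k})$ is mean-zero with $BV$-norm $O(1)$ and $L^{1}$-norm $O(|h|)$, so by the spectral gap (\ref{eq_LY}) restricted to $\Pi_{t+h}(BV)$ one has $|\mathcal{L}_{t+h}^{i}\Pi_{t+h}(H_{a_k}-H_{b_k})|_{BV}\leq C\tilde\beta^{i}$; interpolating with the $L^{1}$-contraction of $\mathcal{L}_{t+h}$ gives $\|\mathcal{L}_{t+h}^{i}\Pi_{t+h}(H_{a_k}-H_{b_k})\|_{L^{1}}\leq C\min(|h|,\tilde\beta^{i})$, whence $|T_{i,k}|\leq Cs_{k+1}(t)\min(1,\tilde\beta^{i}/|h|)$. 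Summing $\min(1,\tilde\beta^{i}/|h|)$ over $i\geq \max(N_3-k,0)$ yields $O(\log N+k)$ because $\log(1/|h|)/\log(1/\tilde\beta)$ is comparable to $N$ while $N-N_3=O(\log N)$ by (\ref{est_N3}); weighting by $s_{k+1}(t)$ and summing over $k\geq 0$ produces $O(\log N)=O(\log\log(1/|h|))$, which is the dominant error and completes the proof.
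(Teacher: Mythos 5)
Your decomposition of the double sum into a principal block $\{i+k< N_3\}$ and a remainder block $\{i+k\geq N_3\}$, and your treatment of the principal block, match the paper's strategy in spirit: you use the duality $\int\phi\,\mathcal{L}_{t+h}^i g\,dm=\int(\phi\circ f_{t+h}^i)g\,dm$, the no-crossing condition (\ref{semc1}) to run a bounded-distortion argument, Keller's estimate (\ref{densidade_L1}) to swap $\mu_{t+h}$ for $\mu_t$, and a telescoping of the partial sums of $J$. These steps are sound and parallel the paper's $S_{111}+S_{1121}$ analysis, where the analogue of your per-term error is handled via Eq.\ (\ref{dirac2}) of Proposition \ref{var_LN3}.

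The remainder block is where the argument breaks down, and the gap is genuine. You bound $\|\mathcal{L}_{t+h}^i\Pi_{t+h}(H_{a_k}-H_{b_k})\|_{L^1}\leq C\min(|h|,\tilde\beta^i)$ by interpolating the spectral-gap decay $\tilde\beta^i$ of the BV norm with the $L^1$-contraction. Summing $\min(1,\tilde\beta^i/|h|)$ over $i\geq N_3-k$ produces a block of roughly $\max(0,\,i_c-(N_3-k))$ unit terms, where $i_c=\log(1/|h|)/\log(1/\tilde\beta)$ is the crossover index. You assert this is $O(\log N+k)$ because ``$i_c$ is comparable to $N$,'' but comparability up to a constant factor is not enough: $\tilde\beta$ is an abstract spectral-gap constant from (\ref{eq_LY}) with no a priori relation to $\lambda$, $\Lambda$, or the Lyapunov exponent $L_t$ that determines $N$. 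If $\tilde\beta$ is close to $1$ (or merely larger than $e^{-L_t}$), then $i_c$ exceeds $N$ by a quantity of order $N$, and $i_c-N_3+k=O(N)=O(\log(1/|h|))$, which is far larger than the required $O(\log\log(1/|h|))$. The paper circumvents this precisely by \emph{not} interpolating naively from the raw bump: for $k\leq N_3$ it first applies $\mathcal{L}_{t+h}^{N_3-k}$, and thanks to (\ref{semc1}) and Eq.\ (\ref{wild33}) of Proposition \ref{var_LN3} the BV norm of $\frac{1}{h}\mathcal{L}_{t+h}^{N_3-k}(H_{a_k}-H_{b_k})$ drops from $O(1/|h|)$ to $O(\Lambda^{N-N_3+k})$, which is only $N^{O(1)}\Lambda^k$; Lemma \ref{norma_L1} then gives an $L^1$ bound of $O(N-N_3+k)=O(\log N+k)$ for the resolvent applied to this partially smoothed bump, and the geometric weight $|s_{k+1}(t)|$ sums this to $O(\log N)$. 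Your approach needs to incorporate this ``pre-smoothing before applying the spectral gap'' step; as written, the remainder estimate is too weak by an unbounded factor.
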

We will prove Proposition \ref{wild_part} in Section \ref{sec_wild}.

\begin{proposition}\label{prop_eqs}
Let $f_t$ be a good transversal family.  Let  $\phi: [0,1] \to \R$ be a lipschitzian observable. If $t \in \Gamma_{h,h_0}^\delta$, where $\Gamma_{h,h_0}^\delta$ is the set given by Proposition \ref{main_prop}, then
 \begin{align*}
\frac{\mathcal{R_{\phi}}(t+h_n)-\mathcal{R_{\phi}}(t)}{s_1(t)J(f_t,v_t) h_n}=&\sum_{j=0}^{N_3(t,h_n)}\left(\phi(f_t^{j}(c))-\int \phi d\mu_t\right) +O\left(\log\log \frac{1}{|h_n|}\right)\\
& + \frac{1}{s_1(t)J(f_t,v_t)}\int \phi (I-\PF_{t+h})^{-1}r_h \ dm.
\end{align*}
\end{proposition}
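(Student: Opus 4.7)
The plan is to assemble the stated identity directly from Propositions \ref{parteA}, \ref{parteB}, and \ref{wild_part}, which already carry out all the substantive analytic work; Proposition \ref{prop_eqs} is essentially a bookkeeping statement collecting them into one line. First, since $\mu_t = \rho_t\, m$, I would rewrite
$$\mathcal{R}_\phi(t+h_n)-\mathcal{R}_\phi(t) = \int \phi\,(\rho_{t+h_n}-\rho_t)\,dm,$$
so that dividing by $h_n$ and applying Proposition \ref{parteA} yields
$$\frac{\mathcal{R}_\phi(t+h_n)-\mathcal{R}_\phi(t)}{h_n} = \int \phi\,(I-\PF_{t+h_n})^{-1}\!\left(\frac{\PF_{t+h_n}(\rho_t)-\PF_{t}(\rho_t)}{h_n}\right)dm.$$
This step uses that $\rho_t$ and $\rho_{t+h_n}$ are probability densities, so $\rho_{t+h_n}-\rho_t$ has zero integral and hence lies in $\Pi_{t+h_n}(BV)$, making the inverse $(I-\PF_{t+h_n})^{-1}$ meaningful on the relevant element.

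Next, I would apply Proposition \ref{parteB}. Property (C) of Proposition \ref{main_prop} guarantees that for $t\in\Gamma^\delta_{h_n,h_0}$ the critical point of $f_t$ is not periodic, so the decomposition
$$\frac{\PF_{t+h_n}(\rho_t)-\PF_t(\rho_t)}{h_n}=\Phi_{h_n}+r_{h_n}$$
is legitimate. Since both $\Phi_{h_n}$ and $r_{h_n}$ have zero mean (the first by construction and the second by Proposition \ref{parteB}), the operator $(I-\PF_{t+h_n})^{-1}$ can be applied termwise, and integrating against $\phi$ produces, exactly as noted in the excerpt,
$$\frac{\mathcal{R}_\phi(t+h_n)-\mathcal{R}_\phi(t)}{h_n} = \int\phi\,\mathcal{W}(t,h_n)\,dm + \int \phi\,(I-\PF_{t+h_n})^{-1}r_{h_n}\,dm.$$

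Finally, I would substitute the expansion of the wild part provided by Proposition \ref{wild_part}, namely
$$\int\phi\,\mathcal{W}(t,h_n)\,dm = s_1(t)J(f_t,v_t)\sum_{j=0}^{N_3(t,h_n)}\!\left(\phi(f_t^j(c))-\int\phi\,d\mu_t\right) + O\!\left(\log\log\tfrac{1}{|h_n|}\right),$$
and divide through by $s_1(t)J(f_t,v_t)$; the displayed formula of the proposition then falls out. The only point requiring a small observation is uniformity of the error after division: by Lemma \ref{inf}, $|s_1(t)J(f_t,v_t)|=S_t\,|J(f_t,v_t)|\ge \underline{s}\,\underline{J}>0$ uniformly for $t\in[a,b]$, so dividing preserves the $O(\log\log(1/|h_n|))$ estimate with a constant independent of $t\in\Gamma^\delta_{h_n,h_0}$. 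There is no real obstacle here, as all the nontrivial estimates are hidden inside the three propositions being combined; the statement is simply the compact form in which the decomposition will be fed into the CLT argument in the subsequent sections.
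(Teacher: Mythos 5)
Your proof is correct and takes essentially the same approach as the paper, which simply states that the proposition ``follows directly from Propositions \ref{parteB} and \ref{wild_part}.'' You have merely filled in the intermediate bookkeeping (the role of Proposition \ref{parteA}, the zero-mean observation, and the uniformity of the error constant via Lemma \ref{inf}), all of which is sound and consistent with the paper's displayed identity after Proposition \ref{parteB}.
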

The proof follows directly from Propositions \ref{parteB} and \ref{wild_part}.

%----------------------------------------------%
%               Proof of the Central Limit Theorem                %
%----------------------------------------------%

\section{Proof of the Central Limit Theorem for the modulus of continuity of $\mathcal{R_{\phi}}$}

To simplify the notation in this section, given a transversal family $t \mapsto f_t$ we will denote $S_t^f= s_1^f(t)$, $J_t^f=J(f_t,\partial_s f_s|_{s=t})$, $\sigma_t^f=\sigma_t^f(\phi)$. Moreover 
$$L_t^f = \int \log |Df_t| d \mu_t^f,$$
where $\mu_t^f$ is the unique absolutely continuous invariant probability of $f_t$, and 
$$\ell^f_t = \frac{1}{\sqrt{L_t^f}}.$$
When there are not confusion with respect  to which family we are dealing with, we will omit $f$ in the notation. 

\begin{lemma}[Functional Central Limit Theorem]\label{fclt} Let $f_t$ be a good transversal $C^2$ family of $C^4$ unimodal maps and $\sigma_t(\phi) \neq 0$ for every $t$. For each $t\in [a,b]$ let us consider the continuous function $\theta \mapsto X_N(\theta,t)$, where
 $$X_N(\theta,t)$$
$$= \frac{1}{\sigma_t  \sqrt{N} }\sum_{k=0}^{\lfloor N \theta  \rfloor-1} \big(  \phi(f_t^k(c)) -\int \phi \ d\mu_t\big)+ \frac{(N\theta-\lfloor N \theta  \rfloor)}{\sigma_t  \sqrt{N}} \big( \phi(f^{\lfloor N \theta  \rfloor}_t(c)) -\int \phi \ d\mu_t\big).$$
Considering the  normalized Lebesgue measure on $ t \in [a,b]$, for each $N$ the function $t\mapsto X_N(\cdot,t)$  induces a measure on the space of continuous functions   and such measures converges  in distribution to the Wiener measure.  We denote $X_N  \stackrel{D} \longrightarrow_N W.$
\end{lemma}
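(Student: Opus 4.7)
The plan is to derive Lemma \ref{fclt} as an immediate consequence of the almost sure invariance principle (ASIP) guaranteed by condition (IV) in Definition \ref{goodf}. Applying (IV) to $\varphi=\phi$ furnishes, on some enlarged probability space, a sequence $\{\widetilde\xi_i\}_{i\ge 1}$ jointly equidistributed with $\{\xi_i\}_{i\ge 1}$ on $([a,b],m)$ together with a standard Brownian motion $W$ such that
$$\left|W(n)-\sum_{i=1}^{n}\widetilde\xi_i\right|=O(n^\kappa)\quad\text{a.s.}$$
for some fixed $\kappa\in(2/5,1/2)$. Since $X_N(\cdot,t)$ is a measurable functional of the sequence $(\xi_1(t),\xi_2(t),\dots)$, its law on $C[0,1]$ coincides with that of the analogous process built from $\{\widetilde\xi_i\}$, so it suffices to work on the enlarged space.

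First I would rewrite $X_N$ in terms of the $\xi_i$. Using $\phi(f_t^k(c))-\int\phi\,d\mu_t=\sigma_t\,\xi_{k-1}(t)$ for $k\ge 1$, one obtains
$$X_N(\theta,t)=\frac{1}{\sqrt{N}}\sum_{i=1}^{\lfloor N\theta\rfloor-1}\xi_i(t)+E_N(\theta,t),\qquad \sup_{\theta,t}|E_N(\theta,t)|\le \frac{3|\phi|_\infty}{\underline\sigma\sqrt{N}}\longrightarrow 0,$$
where the error absorbs the $k=0$ term and the linear-interpolation piece, and where Lemma \ref{inf} supplies $\underline\sigma=\inf_t\sigma_t>0$.

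Next, on the enlarged space define
$$\widetilde X_N(\theta)=\frac{1}{\sqrt{N}}\sum_{i=1}^{\lfloor N\theta\rfloor-1}\widetilde\xi_i,\qquad B_N(\theta)=\frac{W(N\theta)}{\sqrt{N}}.$$
By Brownian scaling, each $B_N$ is a standard Brownian motion on $[0,1]$. The ASIP bound gives
$$\sup_{\theta\in[0,1]}\left|\widetilde X_N(\theta)-\frac{W(\lfloor N\theta\rfloor-1)}{\sqrt{N}}\right|=O\!\left(N^{\kappa-1/2}\right)\quad\text{a.s.,}$$
while L\'evy's modulus of continuity for $W$ yields
$$\sup_{\theta\in[0,1]}\left|\frac{W(\lfloor N\theta\rfloor-1)}{\sqrt{N}}-B_N(\theta)\right|=O\!\left(\sqrt{\tfrac{\log N}{N}}\right)\quad\text{a.s.}$$
Adding these two estimates together with the uniform smallness of $E_N$ gives $\sup_\theta|\widetilde X_N(\theta)-B_N(\theta)|\to 0$ almost surely in $C[0,1]$; combined with $B_N\stackrel{D}=W$ for each $N$, the standard converging-together (Slutsky) theorem for $C[0,1]$-valued random elements forces $\widetilde X_N\stackrel{D}\longrightarrow W$. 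Pushing this back to the parameter space produces $X_N\stackrel{D}\longrightarrow_N W$.

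Essentially no obstacle remains: the deep analytic input is the ASIP from Schnellmann quoted in (IV); the $t$-dependence of $\sigma_t$ and $\mu_t$ is entirely absorbed into the normalised variables $\xi_i(t)$; and the only delicate cosmetic point is the uniform smallness of $E_N$, which is handled by the positive lower bound on $\sigma_t$ furnished by Lemma \ref{inf}.
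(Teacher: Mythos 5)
Your argument is correct and runs on the same engine as the paper's proof: the ASIP furnished by condition (IV) (Schnellmann) applied to the normalized sequence $\xi_i$, combined with the uniform lower bound $\underline\sigma>0$ from Lemma \ref{inf} to dispose of the boundary and interpolation terms. The only difference is that the paper simply cites Theorem E of Philipp--Stout for the implication ``ASIP $\Rightarrow$ weak invariance principle,'' whereas you re-derive that implication directly via Brownian scaling, the oscillation bound for $W$ on $[0,N]$, and a converging-together argument; both are valid, yours being self-contained at the cost of reproving a standard result.
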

\begin{proof} 
By Schnellmann \cite{sch2} we know that the sequence of functions 
$$\xi_i(t) = \frac{1}{\sigma_t}\left(\phi(f_t^{i+1}(c))- \int_{0}^{1} \phi d\mu_t\right)$$
satisfies the ASIP for every exponent error larger than $2/5$.  By  \cite[Theorem E]{philipp}, the  ASIP implies the Functional Central Limit Theorem for $X_N(\theta,t)$. 
\end{proof}

We are going to need the following 

\begin{proposition}[\cite{bil}]\label{teo_princ}
If
\begin{equation}
\frac{\nu_n}{a_n} \stackrel{P} \longrightarrow_n L \label{hip},
\end{equation}
where $L$ is a positive constant and $(a_n)_n$ is a sequence such that $a_n\to \infty$ when $n \to \infty$, then
$$X_N \stackrel{D} \longrightarrow_N W$$
implies
$$Y_n \stackrel{D} \longrightarrow_n W,$$
where $Y_n$ is 
$$ \frac{1}{\sigma_t \sqrt{\nu_n(t)} }\sum_{k=0}^{\lfloor \nu_n \theta  \rfloor-1} \big(  \phi(f_t^k(c)) -\int \phi \ d\mu_t\big)+ \frac{(\nu_n\theta-\lfloor \nu_n \theta  \rfloor)}{\sigma_t\sqrt{\nu_n(t)}} \big( \phi(f^{\lfloor \nu_n \theta  \rfloor}_t(c)) -\int \phi \ d\mu_t\big),$$
\end{proposition}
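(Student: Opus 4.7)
The plan is to follow the classical random-time-substitution argument underlying Anscombe-style extensions of functional central limit theorems. The starting point is the trivial identity $Y_n(\theta,t)=X_{\nu_n(t)}(\theta,t)$: the process $Y_n$ is exactly the process $X_N$ from Lemma \ref{fclt} evaluated at the random index $N=\nu_n(t)$. Because Brownian motion satisfies the scaling identity $L^{-1/2}W(L\theta)\stackrel{d}{=}W(\theta)$, it is enough to prove that for every $T>0$, in $m$-measure,
$$\sup_{\theta\in[0,T]}\bigl|Y_n(\theta,t)-L^{-1/2}X_{a_n}(L\theta,t)\bigr|\longrightarrow 0;$$
combining this with $X_{a_n}\stackrel{D}\longrightarrow W$, Slutsky's theorem and the continuity of the map $f\mapsto L^{-1/2}f(L\,\cdot\,)$ on $C([0,T])$ then yields $Y_n\stackrel{D}\longrightarrow W$.

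Fix $\epsilon>0$ and $T>0$. The hypothesis $\nu_n/a_n\to L$ in $m$-measure implies that the set $A_{n,\epsilon}=\{t\in[a,b]:|\nu_n(t)/(La_n)-1|<\epsilon\}$ has $m$-measure exceeding $1-\epsilon$ for $n$ large. On $A_{n,\epsilon}$ the indices $\lfloor\nu_n\theta\rfloor$ and $\lfloor La_n\theta\rfloor$ differ by at most $\epsilon La_n T$ uniformly in $\theta\in[0,T]$, and the normalising factors $\sqrt{\nu_n}$ and $\sqrt{La_n}$ agree to within a factor $1+O(\epsilon)$. A direct manipulation of the definitions then gives, for $t\in A_{n,\epsilon}$,
$$Y_n(\theta,t)-L^{-1/2}X_{a_n}(L\theta,t)=L^{-1/2}\bigl(X_{a_n}(L\theta+\delta_n(\theta,t),t)-X_{a_n}(L\theta,t)\bigr)+O(\epsilon)\sup_{\tau\in[0,T+1]}|X_{a_n}(\tau,t)|,$$
with $|\delta_n(\theta,t)|\le\epsilon LT$.

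The main step is to absorb the right-hand side via tightness of $\{X_{a_n}\}_n$ in $C([0,T+1])$, which is automatic from the functional CLT. Tightness supplies the modulus-of-continuity estimate
$$\lim_{\delta\to 0}\limsup_{n\to\infty}m\bigl\{t:\sup_{|s-r|<\delta}|X_{a_n}(s,t)-X_{a_n}(r,t)|>\eta\bigr\}=0$$
for every $\eta>0$, together with uniform boundedness in $m$-probability of $\sup_{\tau\in[0,T+1]}|X_{a_n}(\tau,\cdot)|$. Applying the first bound with $\delta=\epsilon LT$ and the second to the $O(\epsilon)$ remainder shows that for every $\eta>0$ the $\sup$-norm on $[0,T]$ of $Y_n(\cdot,t)-L^{-1/2}X_{a_n}(L\,\cdot\,,t)$ is smaller than $\eta$ on a subset of $[a,b]$ of $m$-measure at least $1-3\epsilon$. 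Letting first $n\to\infty$ and then $\epsilon\to 0$ gives the required in-measure convergence.

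The main obstacle is precisely the interaction between the deterministic time variable $\theta$ and the parameter $t$ playing the role of randomness: the random change of index $\theta\mapsto(\nu_n(t)/a_n)\theta$ depends on $t$ and cannot be removed by a pointwise argument since only $m$-measure convergence of $\nu_n/a_n$ is available. Routing through tightness of the canonical processes $X_{a_n}$ converts this random time change into a uniformly controllable perturbation; a Skorokhod-representation argument would give the same conclusion but only after passing to an auxiliary probability space.
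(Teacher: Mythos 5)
Your argument is essentially correct, but it takes a genuinely different route from the paper, whose ``proof'' consists of a reference to Billingsley \cite{bil}, p.~152 (his random change of time theorem, usually established by applying the continuous-mapping theorem to the composition map $(x,u)\mapsto x\circ u$ after checking almost-sure continuity of that map on the support of the limit law). You instead give a self-contained, elementary argument: the exact identity $Y_n(\theta,t)=\sqrt{a_n/\nu_n(t)}\,X_{a_n}\big((\nu_n(t)/a_n)\theta,t\big)$ rewrites $Y_n$ as a random time/amplitude rescaling of the deterministic-index process $X_{a_n}$, and the resulting small perturbations of time (order $\epsilon LT$) and amplitude (order $\epsilon$) are absorbed using the modulus-of-continuity and uniform-boundedness estimates that characterize tightness of $\{X_{a_n}\}$ in $C([0,M])$. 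This buys a proof that never leaves the original probability space and avoids both Skorokhod representation and the abstract composition lemma. A few minor points to tidy: the time window in the tightness estimates should be $[0,LT(1+\epsilon)]$, or any fixed $[0,M]$ with $M>LT$, rather than $[0,T+1]$, which fails when $L>1$; the quantifier order at the end should fix $\eta$ first and then choose $\epsilon$ small enough that $\epsilon LT$ falls below the threshold $\delta$ appearing in the modulus bound; and since $a_n$ need not be an integer, one should either observe that the formula defining $X_N$ extends to real $N\ge1$ with $\sup_\theta|X_{a_n}(\theta,\cdot)-X_{\lfloor a_n\rfloor}(\theta,\cdot)|\to0$, or simply replace $a_n$ by $\lfloor a_n\rfloor$ throughout.
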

\proof See \cite{bil}, page 152.

From now on we will denote
$$\mathcal{D}_{\mathcal{N}}(y)=  \frac{1}{\sqrt{2\pi}} \int_{-\infty}^{y} e^{-\frac{s^2}{2}} ds.$$
The following lemma will be used many times
\begin{lemma}[A variation of Slutsky's Theorem]\label{slutsky}  Let $A_n\colon [0,1] \rightarrow \mathbb{R}$ be   functions and $\Omega_n \subset [0,1]$ be such that 
$$\liminf_n m(\Omega_n) > 1-\gamma,$$
and for every  $y \in \mathbb{R}$  the sequence  $$ a_n(y)=m( t \in \Omega_n  \colon   A_n(t)\leq y )$$
eventually belongs to 
 $$O(y,\epsilon) =( \mathcal{D}_{\mathcal{N}}(y)- \epsilon, \mathcal{D}_{\mathcal{N}}(y)+ \epsilon),$$
 that  is, there is $n_0=n_0(y)$ such that $a_n(y) \in O(y,\epsilon)$ for every $n\geq n_0$. 
 Then
 \begin{itemize}
\item[A.] There exists $\delta > 0$ such that if  $B_n\colon [0,1] \rightarrow \mathbb{R} $ is a function such that 
$$ \liminf_n m(t\in [0,1]\colon |B_n(t)-1|< \delta) > 1-\gamma,$$
then the sequence
$$ b_n(y)=m( t \in [0,1]  \colon   A_n(t)B_n(t) \leq y )$$
eventually belong to $O(y,\epsilon+3\gamma)$.
\item[B.]   There exists $\delta > 0$ such that if  $B_n\colon [0,1] \rightarrow \mathbb{R} $ is a function such that 
$$ \liminf_n m(t\in [0,1]\colon |B_n(t)|< \delta) > 1-\gamma,$$
then the sequence
$$ b_n(y)=m( t \in [0,1]  \colon   A_n(t)+B_n(t) \leq y )$$
eventually belong to  $O(y,\epsilon+3\gamma)$.
 \end{itemize}
\end{lemma}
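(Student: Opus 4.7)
The plan is to treat this as a quantitative, distribution-free analogue of Slutsky's theorem. The proof will rest on two ingredients: (i) simple sandwich inclusions that compare the level sets of $A_n+B_n$ (resp.\ $A_nB_n$) with those of $A_n$ shifted by a small amount, and (ii) the uniform continuity of $\mathcal{D}_{\mathcal{N}}$ on $\mathbb{R}$, which lets us absorb small shifts in $y$ into an arbitrarily small error in $\mathcal{D}_{\mathcal{N}}(y)$. Throughout, the allowed exceptional mass $\gamma$ is budgeted so that both $m(\Omega_n^c)$ and $m(\{t:|B_n(t)-1|\ge\delta\})$ (or $\{|B_n|\ge\delta\}$) are each at most $\gamma$ eventually, which is what will produce the $3\gamma$ term.

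I would do part (B) first because it is cleanest. Fix $\epsilon,\gamma>0$ and choose $\delta>0$ so small that $|\mathcal{D}_{\mathcal{N}}(y\pm\delta)-\mathcal{D}_{\mathcal{N}}(y)|<\gamma$. Then use the inclusions
$$\{A_n+B_n\le y\}\cap\{|B_n|<\delta\}\cap\Omega_n\subset\{A_n\le y+\delta\}\cap\Omega_n,$$
$$\{A_n\le y-\delta\}\cap\{|B_n|<\delta\}\cap\Omega_n\subset\{A_n+B_n\le y\}.$$
Taking measures and discarding the sets $\{|B_n|\ge\delta\}$ and $\Omega_n^c$ (each of measure at most $\gamma$ eventually), these inclusions give, once $n$ is large enough,
$$\mathcal{D}_{\mathcal{N}}(y-\delta)-\epsilon-2\gamma\;\le\;b_n(y)\;\le\;\mathcal{D}_{\mathcal{N}}(y+\delta)+\epsilon+2\gamma,$$
which by the choice of $\delta$ yields $b_n(y)\in O(y,\epsilon+3\gamma)$.

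For part (A) the same scheme works, but one first needs a truncation step because $A_n$ is not assumed bounded. Using hypothesis on $A_n$ together with $\mathcal{D}_{\mathcal{N}}(\pm M)\to 0,1$ as $M\to\infty$, choose $M$ large enough that $m(\{t\in\Omega_n:|A_n(t)|>M\})<\gamma$ eventually, and then pick $\delta$ so small that $M\delta<\delta_0$, where $\delta_0$ is the quantity supplied by part (B) for the parameters $(\epsilon,\gamma)$. On $\{|A_n|\le M\}\cap\{|B_n-1|<\delta\}$ one has $|A_nB_n-A_n|\le M\delta<\delta_0$, so the level sets of $A_nB_n$ can be sandwiched between those of $A_n$ at $y\pm\delta_0$ up to the exceptional set of total mass at most $2\gamma$; applying part (B)'s bookkeeping and the uniform continuity of $\mathcal{D}_{\mathcal{N}}$ gives the conclusion.

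The only real obstacle is the unboundedness of $A_n$ in part (A), and that is handled by the truncation above (which is legitimate because the hypothesis says $A_n$ is \emph{asymptotically} standard normal on $\Omega_n$, and the standard normal has negligible tails). Everything else is routine quantitative bookkeeping with the constants $\delta,\gamma,\epsilon$, so the proof will be short and elementary.
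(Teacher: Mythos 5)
Your treatment of part B is essentially the paper's argument (the paper omits it, saying it is ``quite similar'' to A), and that part of your plan is sound: sandwich $\{A_n+B_n\le y\}$ between $\{A_n\le y\mp\delta\}$ on the good set and use the uniform continuity of $\mathcal{D}_{\mathcal{N}}$ against additive shifts.

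For part A, however, your route is genuinely different from the paper's and it has a gap that is not cosmetic. You want to truncate to $\{|A_n|\le M\}$ and claim $m(\{t\in\Omega_n:|A_n(t)|>M\})<\gamma$ eventually. But the only quantitative information the hypothesis gives is
$$m(\{t\in\Omega_n:A_n(t)>M\})\le 1-\big(\mathcal{D}_{\mathcal{N}}(M)-\epsilon\big), \qquad m(\{t\in\Omega_n:A_n(t)\le -M\})\le \mathcal{D}_{\mathcal{N}}(-M)+\epsilon,$$
so the total is $\le 2\epsilon+2(1-\mathcal{D}_{\mathcal{N}}(M))$, and letting $M\to\infty$ only brings this down to roughly $2\epsilon$, not to $\gamma$. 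Your truncation therefore only works if $\epsilon<\gamma/2$, and tracking the constants through your scheme yields something like $O(y,3\epsilon+C\gamma)$ rather than the stated $O(y,\epsilon+3\gamma)$. This matters in the paper's own use of the lemma: in Proposition~\ref{main_step} the lemma is applied repeatedly, each time with the tolerance $\epsilon$ already inflated to $4\gamma$, $7\gamma$, $10\gamma$, so the requirement $\epsilon<\gamma/2$ is violated on all but the first application.

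The paper avoids the boundedness issue entirely by never comparing $A_nB_n$ to $A_n$ additively. Instead it rescales the threshold: for $y\ge 0$ one has the inclusions
$$D_A^n((1-\delta)y)\cap D_B^n\subset D_{AB}^n(y)\quad\text{and}\quad D_{AB}^n(y)\cap D_B^n\cap\Omega_n\subset D_A^n\big((1-\delta)^{-1}y\big),$$
(and the mirror inclusions for $y<0$), which hold for all $A_n$ with no size restriction because when $A_n(t)\le 0$ the inequality $A_n(t)B_n(t)\le y$ is automatic, and when $A_n(t)>0$ the multiplicative perturbation $B_n(t)\in(1-\delta,1+\delta)$ moves $A_n(t)$ only by the factor $(1\pm\delta)$. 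The choice of $\delta$ is then governed by the fact that $\sup_{y}\sup_{|\delta'|<\delta}|\mathcal{D}_{\mathcal{N}}(y)-\mathcal{D}_{\mathcal{N}}(y(1-\delta'))|$ can be made less than $\gamma$ (the Gaussian tails make this sup finite and tending to $0$ with $\delta$). This multiplicative rescaling is the key idea your proof is missing; with it, the $3\gamma$ accounting in the statement goes through without any assumption relating $\epsilon$ to $\gamma$.
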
 
\begin{proof}[Proof of A] Define
$$D_{A}^n(y) =\{ t \in \Omega_n \colon A_n(t) \leq y \}$$
 $$D_{B}^n= \{t\in [0,1]\colon |B_n(t)-1|< \delta\}$$
 $$D_{AB}^n(y)= \{ t \in [0,1] \colon A_n(t) B_n(t) \leq y \}$$
 Choose $\delta > 0$  such that 
 $$\sup_{y \in \mathbb{R}}  \sup_{|\delta'| < \delta } |\mathcal{D}_{\mathcal{N}}(y)- \mathcal{D}_{\mathcal{N}}(y (1-\delta'))| < \gamma,$$
 and 
 $$\sup_{y \in \mathbb{R}}  \sup_{|\delta'| < \delta } |\mathcal{D}_{\mathcal{N}}(y)- \mathcal{D}_{\mathcal{N}}(y (1-\delta')^{-1})| < \gamma.$$ 
If $y \geq  0$  
$$D_A^n((1-\delta) y)\cap D_{B}^n  \subset D^n_{AB}(y)  \text { and } D^n_{AB}(y) \cap D_{B}^n  \cap \Omega_n   \subset D^n_A((1-\delta)^{-1}y),  $$
Thus, if $n$ is large
\begin{eqnarray} 
m(D^n_{AB}(y)) &\geq& m(D_A^n((1-\delta) y)\cap D_{B}^n) \nonumber \\
&\geq&  m(D_A^n((1-\delta) y)) -\gamma   \geq   \mathcal{D}_{\mathcal{N}}((1-\delta) y)- \epsilon -\gamma  \nonumber \\
&\geq& \mathcal{D}_{\mathcal{N}}(y) -\epsilon- 2\gamma,
\end{eqnarray}
and 
\begin{eqnarray} 
m(D^n_{AB}(y))  &\leq& m(D^n_{AB}(y) \cap D_{B}^n) +\gamma  \nonumber \\
&\leq& m(D^n_{AB}(y) \cap D_{B}^n\cap \Omega_n) +2\gamma  \nonumber \\
&\leq&  m(D^n_A((1-\delta)^{-1}y))  +2\gamma \leq   \mathcal{D}_{\mathcal{N}}((1-\delta)^{-1} y) +\epsilon +2\gamma \nonumber \\
&\leq& \mathcal{D}_{\mathcal{N}}(y) +\epsilon+3\gamma,
\end{eqnarray}

and if $y < 0$ we have
$$D_A^n((1-\delta)^{-1} y)\cap D_{B}^n  \subset D^n_{AB}(y)  \text { and } D^n_{AB}(y) \cap D_{B}^n  \cap \Omega_n  \subset D^n_A((1-\delta)y),  $$
and an analogous analysis as above gives
$$m(D^n_{AB}(y)) \in O(y,\epsilon + 3 \gamma).$$
\end{proof}

\begin{proof}[Proof of B] Since the proof is  is quite similar  to the proof of  A, we will skip it. 

\end{proof}

\begin{lemma}\label{red}  Let $t \mapsto f_t$, $t \in [a,b]$ be a good transversal $C^2$ family of $C^4$ unimodal maps. Let $\psi\colon [c,d]\to [a,b]$ be an affine  map, $\psi(c)=a$ and $\psi(d)=b$ and  $g_\theta=f_{\psi(\theta)}$. For every small enough $h\neq 0$ we can define
$$\Omega_g(h,y)= \left\{\theta\in [c,d]\colon \frac{1}{\sigma_\theta^g \ell_\theta^g S_\theta^g  J_\theta^g \sqrt{-\log |h|}}\left(\frac{\mathcal{R_{\phi}}_g(\theta+h)-\mathcal{R_{\phi}}_g(\theta)}{h}\right) \le y\right\}$$
and
$$\Omega_f(w,y)= \left\{t \in [a,b]\colon \frac{1}{\sigma_t^f \ell_t^f S_t^f J_t^f\sqrt{-\log |w|}}\left(\frac{\mathcal{R_{\phi}}_f(t+w)-\mathcal{R_{\phi}}_f(t)}{w}\right) \le y\right\}.$$
 If 
  $$\frac{m(\Omega_g(h, y))}{m([c,d])}$$
 eventually belong to $O(y,\gamma)$ when $h$ converges to $0$ 
then  
$$\frac{m(\Omega_f(rh,y))}{m([a,b])}$$
 eventually belong to $O(y,\gamma')$ when $h$ converges to $0$,  for every $\gamma' > \gamma$.  Here  $r=\psi'$.
\end{lemma}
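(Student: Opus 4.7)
\emph{Proof plan.} The whole statement is a change of variables, and the only mild subtlety is that the normalizing factor contains $\sqrt{-\log|h|}$, which does not transform exactly under $h\mapsto rh$.

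The first step is to read off how each dynamical ingredient transforms under the reparametrization $\theta\mapsto\psi(\theta)=t$. Since $g_\theta$ \emph{is} the map $f_{\psi(\theta)}$, the invariant measure, density, jump, Lyapunov exponent, and variance depend only on the map and not on how it is parametrized. Thus $\mu_\theta^g=\mu_{\psi(\theta)}^f$, $\mathcal{R}_\phi^g(\theta)=\mathcal{R}_\phi^f(\psi(\theta))$, $\sigma_\theta^g=\sigma_{\psi(\theta)}^f$, $\ell_\theta^g=\ell_{\psi(\theta)}^f$ and $S_\theta^g=S_{\psi(\theta)}^f$. The quantity $J$ is the only one that sees the velocity of the family: from $v_\theta^g=\partial_s g_s|_{s=\theta}=\psi'(\theta)\,v_{\psi(\theta)}^f=r\,v_{\psi(\theta)}^f$ and the defining formula for $J$ one obtains $J_\theta^g=r\,J_{\psi(\theta)}^f$.

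Next I would plug these identities into the Newton quotient. A factor $r$ appears in the numerator (from $\mathcal{R}_\phi^g(\theta+h)-\mathcal{R}_\phi^g(\theta) = \mathcal{R}_\phi^f(\psi(\theta)+rh)-\mathcal{R}_\phi^f(\psi(\theta))$ combined with dividing by $h$ rather than $rh$) and another factor $r$ sits in the denominator via $J_\theta^g=rJ_{\psi(\theta)}^f$; they cancel, leaving only a mismatch between $\sqrt{-\log|h|}$ and $\sqrt{-\log|rh|}$. Setting $t=\psi(\theta)$, $w=rh$ and
$$\varepsilon_h:=\frac{\sqrt{-\log|rh|}}{\sqrt{-\log|h|}}\xrightarrow[h\to 0]{}1,$$
one gets the clean identity
$$\frac{1}{\sigma_\theta^g\ell_\theta^g S_\theta^g J_\theta^g\sqrt{-\log|h|}}\cdot\frac{\mathcal{R}_\phi^g(\theta+h)-\mathcal{R}_\phi^g(\theta)}{h}=\varepsilon_h\cdot\frac{1}{\sigma_t^f\ell_t^f S_t^f J_t^f\sqrt{-\log|w|}}\cdot\frac{\mathcal{R}_\phi^f(t+w)-\mathcal{R}_\phi^f(t)}{w}.$$
Hence $\Omega_g(h,y)=\psi^{-1}\!\bigl(\Omega_f(rh,\,y/\varepsilon_h)\bigr)$.

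The change-of-variables formula for Lebesgue measure gives $m(\psi^{-1}E)=m(E)/r$ and $m([a,b])=r\,m([c,d])$, so these two factors cancel and one obtains
$$\frac{m(\Omega_g(h,y))}{m([c,d])}=\frac{m(\Omega_f(rh,\,y/\varepsilon_h))}{m([a,b])}.$$
By hypothesis the left-hand side eventually belongs to $O(y,\gamma)$. Writing $w=rh$ and making the substitution $y\mapsto\varepsilon_h y$ yields that $m(\Omega_f(w,y))/m([a,b])$ eventually belongs to $O(\varepsilon_h y,\gamma)$. Since $\mathcal{D}_\mathcal{N}$ is continuous at $y$ and $\varepsilon_h y\to y$, we have $|\mathcal{D}_\mathcal{N}(\varepsilon_h y)-\mathcal{D}_\mathcal{N}(y)|<\gamma'-\gamma$ for all $h$ sufficiently small, so $O(\varepsilon_h y,\gamma)\subset O(y,\gamma')$ eventually. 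As $h\to 0$ is equivalent to $w=rh\to 0$, this is the desired conclusion.

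\emph{Main obstacle.} There is essentially no hard step; the only point that requires attention is the logarithm mismatch $\varepsilon_h\neq 1$. It is handled purely by pointwise continuity of the Gaussian cumulative distribution at the fixed $y$, which is why one must allow the slight widening $\gamma\to\gamma'$ stated in the lemma.
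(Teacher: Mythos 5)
Your reduction is correct in spirit and matches what the paper intends: the change of variables $t=\psi(\theta)$, the identity $J_\theta^g=r\,J_{\psi(\theta)}^f$ (with $\sigma$, $\ell$, $S$ invariant under reparametrization), the cancellation of the two factors of $r$, and the observation that the only residual mismatch is the scalar $\varepsilon_h=\sqrt{-\log|rh|}/\sqrt{-\log|h|}\to 1$. The paper's proof of Lemma \ref{red} is a one-line citation of Lemma \ref{slutsky}.A; your proposal essentially re-derives the content of that lemma by hand, which is fine as explicit exposition.

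There is, however, one logical gap in the last step. The hypothesis gives, for each \emph{fixed} $y$, a threshold $h_0(y)>0$ below which $m(\Omega_g(h,y))/m([c,d])\in O(y,\gamma)$. You then ``make the substitution $y\mapsto\varepsilon_h y$,'' i.e.\ you evaluate the hypothesis at the $h$-dependent argument $\varepsilon_h y$; this is not immediately licensed, since $h_0(\varepsilon_h y)$ could in principle shrink faster than $|h|$ does. The rescue is the monotonicity in $y$ of $y\mapsto m(\Omega_g(h,y))$ (a sub-level-set measure): choose $\delta>0$ with $|\mathcal{D}_{\mathcal{N}}(y\pm\delta)-\mathcal{D}_{\mathcal{N}}(y)|<\gamma'-\gamma$, apply the hypothesis at the two \emph{fixed} arguments $y-\delta$ and $y+\delta$, note that $\varepsilon_h y\in(y-\delta,y+\delta)$ for $|h|$ small, and squeeze $m(\Omega_g(h,\varepsilon_h y))$ between $m(\Omega_g(h,y-\delta))$ and $m(\Omega_g(h,y+\delta))$. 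This squeeze is precisely the mechanism inside the proof of Lemma \ref{slutsky}, which is why the paper simply invokes Lemma \ref{slutsky}.A (with $A_n$ the normalized Newton quotient of $g$ for an arbitrary sequence $h_n\to 0$, $\Omega_n=[c,d]$, and the constant function $B_n\equiv 1/\varepsilon_{h_n}$). With that citation, or with the monotonicity squeeze spelled out, your proof is complete and coincides with the paper's.
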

\begin{proof} It follows easily from  Lemma \ref{slutsky}.A. \end{proof}
\begin{remark} Lemma \ref{red} implies that it is enough to show our main theorem for families parametrized by $[0,1]$.
\end{remark}

\begin{proposition}\label{main_step} For every $\gamma > 0$ there exists $Q_1$ with the following property. Let $f_t$ be a good transversal $C^2$ family of $C^4$ piecewise expanding  unimodal maps with $\sigma_t(\phi) \neq 0$ for every $t$ and 
$$Q=\sup_{t,t'\in [c,d]} \left|1-\frac{L_{t'}}{L_t}\right| < Q_1.$$
Then for every $h$ small enough we have
$$\frac{1}{m([c,d])}m \left\{t \in [c,d]\colon \frac{1}{\sigma_t \ell_t S_t J_t\sqrt{-\log |h|}}\left(\frac{\mathcal{R_{\phi}}(t+h)-\mathcal{R_{\phi}}(t)}{h}\right) \le y\right\}$$
belongs to $O(y,13\gamma)$. 

\end{proposition}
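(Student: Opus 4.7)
The plan is to decompose the Newton quotient via Proposition \ref{prop_eqs}, approximate the resulting random-index Birkhoff sum by a deterministic-index one via the functional CLT of Lemma \ref{fclt}, and chain Slutsky-type reductions (Lemma \ref{slutsky}), using the hypothesis $Q<Q_1$ to keep the deviation of $L_t$ from a fixed value $L_*$ small enough to fit within the $13\gamma$ budget.

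Let $h_n\to 0$ and $\Gamma_n=\Gamma^\delta_{h_n,h_0}$ be the set of Proposition \ref{main_prop}, arranged so that $m(\Gamma_n)>1-\gamma$. On $\Gamma_n$, Proposition \ref{prop_eqs} gives
\begin{equation*}
Z_n(t):=\frac{1}{\sigma_t\ell_tS_tJ_t\sqrt{-\log|h_n|}}\;\frac{\mathcal{R_{\phi}}(t+h_n)-\mathcal{R_{\phi}}(t)}{h_n}=\frac{S_{N_3(t,h_n)}(t)}{\sigma_t\ell_t\sqrt{-\log|h_n|}}+E_n(t),
\end{equation*}
where $S_k(t)=\sum_{j=0}^{k}(\phi\circ f_t^j-\int\phi\,d\mu_t)$ and $E_n$ absorbs the $O(\log\log 1/|h_n|)$ term together with the tame integral $\int\phi(I-\mathcal{L}_{t+h_n})^{-1}r_{h_n}\,dm/(\sigma_t\ell_tS_tJ_t\sqrt{-\log|h_n|})$. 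By Proposition \ref{parteB}, property (V)-A$_1$ (which makes $(I-\mathcal{L}_{t+h})^{-1}$ uniformly bounded on the mean-zero subspace of $BV$), and the lower bounds of Lemma \ref{inf}, the tame integral is uniformly bounded and $\|E_n\|_\infty\to 0$. Fix $t_0\in[c,d]$, set $L_*=L_{t_0}$ and $M_n=\lfloor -\log|h_n|/L_*\rfloor$, and rewrite
\begin{equation*}
\frac{S_{N_3}(t)}{\sigma_t\ell_t\sqrt{-\log|h_n|}}=\sqrt{L_t/L_*}\cdot\frac{S_{N_3}(t)}{\sigma_t\sqrt{M_n}}+o(1).
\end{equation*}
By the hypothesis $Q<Q_1$, the factor $\sqrt{L_t/L_*}$ lies uniformly within $O(Q_1)$ of $1$.

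For the second factor, Lemma \ref{fclt} yields the functional CLT $X_{M_n}\Rightarrow W$, so in particular $X_{M_n}(1,t)=S_{M_n}(t)/(\sigma_t\sqrt{M_n})$ satisfies $m\{t:X_{M_n}(1,t)\le y\}\to\mathcal{D}_{\mathcal{N}}(y)$. The ASIP of condition (IV) applied to $\log|Df_t|\in V_\alpha$, combined with the definition of $N(t,h_n)$ and $|N_3-N|\le C_5\mathcal{K}\log N$ from Proposition \ref{main_prop}, yields $\theta_n(t):=N_3(t,h_n)/M_n\to L_*/L_t$ in probability in $t$; in particular $|\theta_n-1|\le Q+o(1)$ uniformly. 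Tightness of the Wiener limit (H\"older continuity of exponent $<\tfrac12$) supplies, for each $\gamma>0$, some $\delta=\delta(\gamma)>0$ with $m\{t:\sup_{|\theta-1|\le\delta}|X_{M_n}(\theta,t)-X_{M_n}(1,t)|>\gamma/2\}<\gamma/2$ for $n$ large; choosing $Q_1\le\delta/2$ then ensures $|X_{M_n}(\theta_n(t),t)-X_{M_n}(1,t)|\le\gamma/2$ on a set of measure $\ge 1-\gamma$.

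Iterate Lemma \ref{slutsky}: start from the approximation $m\{t:X_{M_n}(1,t)\le y\}\in O(y,\gamma)$; apply Lemma \ref{slutsky}.A to multiply by $\sqrt{L_t/L_*}$ (uniformly within $\delta$ of $1$ on all of $[c,d]$); apply Lemma \ref{slutsky}.B to add $X_{M_n}(\theta_n,t)-X_{M_n}(1,t)$; apply Lemma \ref{slutsky}.B again to add $E_n$; and finally account for $\Gamma_n^c$ of measure $\le\gamma$. Each application inflates the error by at most $3\gamma$, and the total stays within the $13\gamma$ budget. The main obstacle is precisely this random-to-deterministic index swap: Proposition \ref{teo_princ} demands $\nu_n/a_n$ converge in probability to a \emph{constant}, which fails here since the natural limit $1/L_t$ varies with $t$; the role of the hypothesis $Q<Q_1$ is exactly to quantify how close $1/L_t$ must be to the fixed $1/L_*$ so that the discrepancy can be absorbed by the modulus of continuity of the Wiener limit.
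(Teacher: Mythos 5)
Your proposal is correct in its essentials and takes a route that is genuinely different from the paper's, though the key ingredients (the functional CLT of Lemma \ref{fclt}, Lévy's modulus of continuity, and iterated Slutsky reductions controlled by the smallness of $Q$) are shared. The paper applies Billingsley's random-index theorem, Proposition \ref{teo_princ}, with the cleverly chosen random normalizer $\nu_n(t)=N_4(t,h_n)L_t/L_{t_0}$, so that $\nu_n/(-\log|h_n|)\to 1/L_{t_0}$ is a \emph{constant} (this is exactly what Billingsley's hypothesis demands); evaluating the resulting process $Y_n$ at $\theta=L_{t_0}/L_t$ then recovers the actual index $N_4$, and the discrepancy between $Y_n(1,\cdot)$ and $Y_n(L_{t_0}/L_t,\cdot)$ is controlled by the modulus of continuity because $|1-L_{t_0}/L_t|\le Q$ deterministically. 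You instead bypass Billingsley entirely by using a deterministic index $M_n=\lfloor-\log|h_n|/L_*\rfloor$ in the FCLT of Lemma \ref{fclt} and then evaluating $X_{M_n}$ at the random time $\theta_n(t)=N_3(t,h_n)/M_n$. This is a legitimate simplification: you trade the random-index CLT for the task of showing $\theta_n$ concentrates near $1$, which your choice of $Q_1$ handles via the same modulus-of-continuity argument. Each approach requires essentially the same budget of Slutsky steps.

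Two small inaccuracies in your write-up should be corrected. First, the claim that $|\theta_n-1|\le Q+o(1)$ \emph{uniformly} is not right: the input fact $L_t N_4(t,h_n)/(-\log|h_n|)\to 1$ holds only for almost every $t$ (it is not a uniform statement), so $\theta_n(t)\to L_*/L_t$ only almost everywhere, hence in probability; the bound $|\theta_n-1|\le Q+\epsilon$ then holds on a set of measure $\to 1$, not uniformly. Your proof still goes through, because the decomposition $\{|X_{M_n}(\theta_n,t)-X_{M_n}(1,t)|>\gamma/2\}\subset\{|\theta_n-1|>\delta\}\cup\{\sup_{|\theta-1|\le\delta}|X_{M_n}(\theta,t)-X_{M_n}(1,t)|>\gamma/2\}$ only needs the first set to have small measure for $n$ large, which your choice $Q_1\le\delta/2$ together with convergence in probability supplies. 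Second, the convergence $\theta_n\to L_*/L_t$ does not come from the ASIP of condition (IV) applied to $\log|Df_t|$; it comes from Schnellmann's Birkhoff-type result (\cite{sch1}) that $\frac{1}{N}\sum_{k=1}^N\log|Df_t(f_t^k(c))|\to L_t$ for almost every parameter $t$, combined with the sandwich inequality defining $N(t,h)$ and the bound $|N-N_3|\le C_5\mathcal{K}\log N$ from Proposition \ref{main_prop}. With these two points amended, the proof is sound.
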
 
\begin{proof} Without loss of generality we assume that $[c,d]=[0,1]$. It is enough to prove the following claim: For every sequence 
$$h_n\rightarrow_n 0$$
and every $\gamma > 0$,   the sequence
$$s_n= m \left\{t \in [0,1]:\frac{1}{\sigma_t \ell_t S_t J_t\sqrt{-\log |h_n|}}\left(\frac{\mathcal{R_{\phi}}(t+h_n)-\mathcal{R_{\phi}}(t)}{h_n}\right) \le y\right\}   $$
eventually belong to the interval $O(y,12\gamma)$.

\noindent Fix a large  $\mathcal{K} > 0$.  By Proposition \ref{main_prop}, for every $\gamma > 0$ there exist $\delta > 0$, $h_0 > 0$ and  sets $\Gamma^\delta_{h,h_0}, \Gamma^\delta_{h_0} \subset I$, with  $\Gamma^\delta_{h,h_0}\subset \Gamma^\delta_{h_0}$,  for every $h\neq 0$  satisfying $|h|< h_0$,  such that 
 \begin{itemize}
\item[A.]  $\lim_{h \rightarrow 0} m(\Gamma^\delta_{h,h_0})=  m(\Gamma^\delta_{h_0}) > 1-\gamma$.
\item[B.] If $t \in  \Gamma^\delta_{h,h_0}$ then there exists $N_3(t,h)$ such that 
$$
\lfloor  \frac{\epsilon}{2} \log N(t,h) \rfloor  \leq N(t,h)-N_3(t,h)\leq C_5 \mathcal{K} \log N(t,h)
$$
and 
$$
c \notin [f_{t+h}^i\circ f_t^{j+1}(c), f_{t+h}^{i+1}\circ f_{t}^{j}(c)]
$$
for all $1\le j<  N_3(t,h)$ and $0\le i <  N_3(t,h)-j$.
\end{itemize} 
For all $h \neq 0$ and $t \in [0,1]$, define $N_4(t,h)=N_3(t,h)$ if $t \in \Gamma^\delta_{h,h_0}$ and $|h| < h_0$, and $N_4(t,h)=N(t,h)$, otherwise. Therefore, for $B$ we have
\begin{equation}\label{refer_B} 
N(t,h)-N_4(t,h)\leq C_5 \mathcal{K} \log N(t,h)
\end{equation}
for every $(t,h)$. Since
$$\frac{1}{|Df_t^{N(t,h)+1}(f_t(c))|}\le |h| <  \frac{1}{|Df_t^{N(t,h)}(f_t(c))|},$$
we have 
$$ \frac{1}{N(t,h)} \sum_{k=1}^{N(t,h)}\log |Df_t(f_t^k(c))| <  \frac{-\log |h| }{N(t,h)}\leq   \frac{1}{N(t,h)}   \sum_{k=1}^{N(t,h)+1} \log|Df_t(f_t^k(c))|.$$
By Schnellmann\cite{sch1}, we have for almost every $t$
$$\lim_{N\to +\infty} \frac{1}{N} \sum_{k=1}^{N} \log|Df_t(f_t^k(c))|  = L_t= \int \log |Df_t| \ d\mu_t,$$
which implies that for almost every $t$
$$\lim_{h\rightarrow 0} \frac{-\log |h| }{N(t,h)} =  \int \log|Df_t| \ d\mu_t.$$
And by Eq. (\ref{refer_B})
$$\frac{-\log |h| }{N(t,h)}  \leq \frac{-\log |h| }{N_4(t,h)} \leq  \frac{-\log |h| }{N(t,h)- C_5 \mathcal{K} \log N(t,h) },$$ 
we also have  
\begin{equation}\label{conv_P2} \lim_{h\rightarrow 0} \frac{ L_t N_4(t,h)} {-\log |h| }= 1.\end{equation}
for almost every $t \in [0,1]$.  Fix $t_0 \in [0,1]$ such that $L_{t_0}= min_{t \in [0,1]} L_t$. Then 
\begin{equation}\label{conv_P}
\frac{ L_t /L_{t_0}N_4(t,h_n)} {-\log |h_n| } \stackrel{P} \longrightarrow \frac{1}{L_{t_0}}.
\end{equation} 
By Lemma \ref{fclt} and Propostion \ref{teo_princ},
\begin{equation}\label{yn} Y_n(\theta,t) \stackrel{D} \longrightarrow_n W,\end{equation} 
where $Y_n$ is given in Propostion \ref{teo_princ} and $W$ is the Wiener measure, with 
$$\nu_n(t) =  N_4(t,h_n) \frac{L_t}{L_{t_0}}.$$
Hence, taking $\theta=1$ we conclude that 
\begin{equation}\label{Yn_converge_normal}
Y_n(1,t) \stackrel{D} \longrightarrow_n \mathcal{N}(0,1),
\end{equation} 
where $\mathcal{N}(0,1)$ denotes the Normal distribution with average zero and  variance one.  
Let $$Q=\sup_{t\in [0,1]} \left|1-\frac{L_{t_0}}{L_t}\right|.$$
 Fix $\alpha \in (0,1/2)$.  The L\'evy's modulus of continuity  theorem (see for instance Karatzas and Shreve \cite{w}) implies that for almost every function $f$ with respect to  the Wiener measure there exists $C_f$ such that 
 $$|f(\theta')-f(\theta)|\leq C_f |\theta'-\theta|^\alpha$$
 for all $\theta', \theta \in [0,1]$. In particular there exist $H=H(\gamma)$ and a set $\Omega_\gamma$ of $\alpha$-H\"older continuous  functions in $C([0,1],\mathbb{R})$, whose measure with respect to the Wiener measure is larger than $1-\gamma$, such that 
$$|f(\theta')-f(\theta)|\leq H  |\theta'-\theta|^\alpha.$$
In particular for $f \in \Omega_\gamma$ we have
\begin{equation}\label{wiener}  \max_{\theta \in [1-Q,1]} |f(1)-f(\theta)|\leq HQ^\alpha,\end{equation} 
Due to Eq. (\ref{yn}), 
$$\liminf_{n} m\{ t \in [0,1]\colon \  \max_{\theta \in [1-Q,1]} |Y_n(1,t)-Y_n(\theta,t)|\leq HQ^\alpha  \} > 1-\gamma.$$
In particular if 
$$D_n = \{ t \in [0,1]\colon \   |Y_n(1,t)-Y_n(\frac{L_{t_0}}{L_t},t)|\leq 2HQ^\alpha  \}$$
then $\liminf_n m(D_n) > 1-\gamma.$ Let us apply Lemma \ref{slutsky}.B with $\Omega_n=D_n$, $A_n(t)= Y_n(1,t)$ and $B_n(t)= Y_n(\frac{L_{t_0}}{L_t},t)-Y_n(1,t)$. Observe that by Eq.(\ref{Yn_converge_normal}) the sequence $a_n(y)$ defined in Lemma \ref{slutsky} eventually belongs to $O(y,\epsilon)$ for all $\epsilon>0$. Hence, taking $\epsilon=\gamma$, there exists $\delta_1=\delta_1(\gamma) > 0$ such that if $2HQ^\alpha < \delta$  we have
\begin{equation}
m(t\in [0,1]\colon  Y_n(\frac{L_{t_0}}{L_t},t) \leq y )  
\end{equation} 
eventually belongs to $O(y,4\gamma)$. Choose $Q_0=Q_0{\gamma} > 0$ such that if $Q  < Q_0$ then $2HQ^\alpha < \delta_1$. Note that 
\begin{equation} 
Y_n(\frac{L_{t_0}}{L_t},t)=\sqrt{ \frac{L_{t_0}}{L_t} } \frac{1}{\sigma_t\sqrt{N_4(t,h_n)} }\sum_{k=0}^{\lfloor N_4(t,h_n)  \rfloor-1} \big(\phi(f_t^k(c)) -\int \phi \ d\mu_t\big).
\end{equation} 
By Eq. (\ref{conv_P2}) and Lemma \ref{slutsky}.A, the sequence
$$ m(t \in [0,1]\colon  \frac{\sqrt{L_{t_0}}}{\sigma_t \sqrt{-\log |h_n|} }\sum_{k=0}^{\lfloor N_4(t,h_n)  \rfloor-1} \big(  \phi(f_t^k(c)) -\int \phi \ d\mu_t \big)\leq  y )$$
eventually belongs to  $O(y,7\gamma)$. Applying again Lemma \ref{slutsky}.A, with 
$$A_n(t)=  \frac{\sqrt{L_{t_0}}}{\sigma_t \sqrt{-\log |h_n|} }\sum_{k=0}^{\lfloor N_4(t,h_n)  \rfloor-1} \big(  \phi(f_t^k(c)) -\int \phi \ d\mu_t \big),$$
$\Omega_n= [0,1]$ and 
$$B_n(t)= \sqrt{\frac{L_t}{L_{t_0}}},$$
there exists $\delta_2=\delta_2(\gamma)>0$ such that if 
\begin{equation}
\label{sq}  \left|\sqrt{\frac{L_t}{L_{t_0}}}-1\right| < \delta_2
\end{equation} 
for every $t$ then 
$$ m(t \in [0,1]\colon  \frac{\sqrt{L_{t}}}{\sigma_t \sqrt{-\log |h_n|} }\sum_{k=0}^{\lfloor N_4(t,h_n)  \rfloor-1} \big(  \phi(f_t^k(c)) -\int \phi \ d\mu_t \big)\leq  y )$$
eventually belong to $O(y,10\gamma)$. Choose $Q_1 = \min\{Q_0,\delta_2\}$ such that $Q< Q_1$ implies Eq. (\ref{sq}). Finally by Propositions \ref{wild_part} and \ref{prop_eqs}, if $0< |h_n|\leq h_0$ and $t\in \Gamma^\delta_{h_n,h_0}$  we have 
 \begin{align*}
\frac{\mathcal{R_{\phi}}(t+h_n)-\mathcal{R_{\phi}}(t)}{ S_t J_t  h_n}=&\sum_{j=0}^{N_3(t,h_n)}\left(\phi(f_t^{j}(c))-\int \phi d\mu_t\right) +O\left(\log\log \frac{1}{|h_n|}\right)\\
& + \frac{1}{S_t J_t}\int \phi (I-\PF_{t+h})^{-1}r_h \ dm.
\end{align*}
Since 
$$
\frac{\log\log \frac{1}{|h_n|}}{\sqrt{\log\frac{1}{|h_n|}}}\rightarrow_n 0
$$
and
$$
\sup |(I-\PF_{t+h})^{-1}r_h|_{L^1} < \infty,
$$
we have 
$$\frac{\mathcal{R_{\phi}}(t+h_n)-\mathcal{R_{\phi}}(t)}{ S_t  \sigma_t J_t  h_n \sqrt{-\log |h_n| }}= \frac{1}{\sigma_t  \sqrt{-\log |h_n| }} \sum_{j=1}^{N_3(t,h_n)}\left(\phi(f_t^{j}(c))-\int \phi d\mu_t\right) +r(t,h_n),$$
where 
$$\lim_n \sup_{t\in \Gamma^\delta_{h_n,h_0}} |r(t,h_n)|=0.$$
Hence, it is easy to conclude that 
\begin{align} 
& \label{pp} \frac{\mathcal{R_{\phi}}(t+h_n)-\mathcal{R_{\phi}}(t)}{ S_t \sigma_t  \ell_tJ_t  h_n \sqrt{-\log |h_n| }}\nonumber \\ 
&= \frac{1}{\ell_t  \sigma_t  \sqrt{-\log |h_n| }} \sum_{j=1}^{N_3(t,h_n)}\left(\phi(f_t^{j}(c))-\int \phi d\mu_t\right) +r'(t,h_n),
\end{align} 
for every $t\in \Gamma^\delta_{h_n,h_0}$, where $$\ell_t= \frac{1}{\sqrt{L_t}}$$
and
$$\lim_n \sup_{t\in \Gamma^\delta_{h_n,h_0}} |r'(t,h_n)|=0.$$
Since $m(\Gamma^\delta_{h,h_0})> 1-\gamma$, we can apply Lemma \ref{slutsky} 
  (remember that $N_4(t,h)=N_3(t,h)$ for $t \in \Gamma^\delta_{h,h_0}$)  to conclude that the sequence 
$$m(t \in [0,1] \colon  \frac{\mathcal{R_{\phi}}(t+h_n)-\mathcal{R_{\phi}}(t)}{ S_t \sigma_t  \ell_tJ_t h_n \sqrt{-\log |h_n| }} \leq y)$$
eventually belong to the interval  $O(y,13\gamma)$. 
\end{proof}

\begin{lemma}\label{dec}  
Let $[c_i,d_i] \subset [a,b]$, $i \in \Delta \subset \mathbb{N}$, be intervals with pairwise disjoint interior and such that 
$$m([a,b]\setminus \bigcup_{i \in \Delta} [c_i,d_i])=0.$$
If $t\mapsto f_t$, with $t \in [c_i,d_i]$, are  good  transversal families such that for all $i\in \Delta$ and $y \in \mathbb{R}$ we have
$$\frac{1}{m([c_i,d_i])}m \left\{t \in [c_i,d_i]\colon \frac{1}{\sigma_t \ell_t S_t J_t\sqrt{-\log |h|}}\left(\frac{\mathcal{R_{\phi}}(t+h)-\mathcal{R_{\phi}}(t)}{h}\right) \le y\right\}$$
eventually belongs to $O(y,\gamma),$ then 
$$\frac{1}{m([a,b])}m \left\{t \in [a,b]\colon t + h \in [a,b] \text { and } \frac{1}{\sigma_t \ell_t S_t J_t\sqrt{-\log |h|}}\left(\frac{\mathcal{R_{\phi}}(t+h)-\mathcal{R_{\phi}}(t)}{h}\right) \le y\right\}$$
eventually belongs to $O(y,\gamma +\epsilon),$ for every $\epsilon > 0$.
\end{lemma}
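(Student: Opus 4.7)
The proof is a routine truncation-and-assembly argument, with the only subtlety being the handling of the boundary condition $t+h \in [a,b]$ in the global statement. Fix $\epsilon > 0$ and $y \in \mathbb{R}$; I will show the quotient in the conclusion lies in $O(y, \gamma + \epsilon)$ for all sufficiently small $|h|$.

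First I would truncate the countable cover to a finite one. Since the disjoint union $\bigcup_{i\in\Delta}[c_i,d_i]$ has full $m$-measure in $[a,b]$, I can choose a finite subset $\Delta_0 \subset \Delta$ such that
$$m\Bigl(\bigcup_{i \in \Delta \setminus \Delta_0} [c_i,d_i]\Bigr) < \frac{\epsilon}{3}\, m([a,b]).$$
Next, for each of the finitely many $i \in \Delta_0$, the hypothesis furnishes an $h_i(y) > 0$ such that whenever $0 < |h| < h_i(y)$ the local set
$$A_i(h,y) := \Bigl\{t \in [c_i,d_i]\colon \tfrac{1}{\sigma_t \ell_t S_t J_t \sqrt{-\log |h|}}\bigl(\tfrac{\mathcal{R_{\phi}}(t+h)-\mathcal{R_{\phi}}(t)}{h}\bigr) \le y\Bigr\}$$
satisfies $m(A_i(h,y)) \in \bigl(\mathcal{D}_{\mathcal{N}}(y) - \gamma,\, \mathcal{D}_{\mathcal{N}}(y) + \gamma\bigr)\cdot m([c_i,d_i])$. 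Taking $h^\ast = \min_{i\in\Delta_0} h_i(y)$ (which is positive since $\Delta_0$ is finite) and summing over $\Delta_0$, together with $\sum_{i\in\Delta_0} m([c_i,d_i]) \in \bigl(m([a,b]) - \tfrac{\epsilon}{3},\, m([a,b])\bigr]$, yields
$$\sum_{i \in \Delta_0} m(A_i(h,y)) \in \bigl(\mathcal{D}_{\mathcal{N}}(y) - \gamma - \tfrac{\epsilon}{3},\, \mathcal{D}_{\mathcal{N}}(y) + \gamma + \tfrac{\epsilon}{3}\bigr)\cdot m([a,b])$$
for $0 < |h| < h^\ast$.

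Finally, I would compare the disjoint union $\bigsqcup_{i\in\Delta_0} A_i(h,y)$ with the global set $G(h,y)$ appearing in the conclusion. The symmetric difference $G(h,y)\,\triangle\, \bigsqcup_{i\in\Delta_0} A_i(h,y)$ is contained in
$$\Bigl(\bigcup_{i\in\Delta\setminus\Delta_0}[c_i,d_i]\Bigr) \,\cup\, \bigl\{t \in [a,b] : t+h \notin [a,b]\bigr\},$$
whose $m$-measure is at most $\tfrac{\epsilon}{3}\,m([a,b]) + |h|/(b-a)$. Shrinking $h^\ast$ further so that $|h|/(b-a) < \tfrac{\epsilon}{3}\,m([a,b])$, this symmetric difference has measure below $\tfrac{2\epsilon}{3}\,m([a,b])$. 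Combining with the previous display and dividing by $m([a,b])$ gives $m(G(h,y))/m([a,b]) \in O(y, \gamma + \epsilon)$ for all $0 < |h| < h^\ast$, which is exactly the conclusion.

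I do not expect any real obstacle here: the argument is pure bookkeeping with finite additivity. The only point worth a moment's care is that the threshold $h_i(y)$ from the local hypothesis depends on $i$, but since $\Delta_0$ has been made finite by the initial truncation, taking the minimum causes no difficulty; the infinite tail of the decomposition is absorbed into the $\tfrac{\epsilon}{3}$ budget. The boundary correction $\{t : t+h\notin[a,b]\}$ is negligible because it has measure $O(|h|)$.
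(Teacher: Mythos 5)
Your argument is correct and follows essentially the same route as the paper's: truncate to a finite subfamily $\Delta_0$ absorbing the infinite tail into the $\epsilon$-budget, invoke the local estimate on each of the finitely many pieces and take the minimum of the finitely many thresholds $h_i$, then compare the resulting union with the global set. The only cosmetic difference is that the paper bakes the constraint $t+h\in[a,b]$ directly into its local sets $\Omega_i(h,y)$ and absorbs the discrepancy into $O(y,\gamma+\epsilon)$ before taking a convex combination, whereas you keep the local sets $A_i(h,y)$ free of that constraint and account for the boundary correction once at the end via a symmetric-difference bound — both bookkeepings are equivalent.
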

\begin{proof} Define
$$\Omega(h,y)= \left\{t \in [a,b]\colon t+h \in [a,b] \text{ and } \frac{1}{\sigma_t \ell_t S_t J_t\sqrt{-\log |h|}}\left(\frac{\mathcal{R_{\phi}}(t+h)-\mathcal{R_{\phi}}(t)}{h}\right) \le y\right\}$$
and
$$\Omega_i(h,y)= \left\{t \in [c_i,d_i]\colon t+h \in [a,b] \text{ and } \frac{1}{\sigma_t \ell_t S_t J_t\sqrt{-\log |h|}}\left(\frac{\mathcal{R_{\phi}}(t+h)-\mathcal{R_{\phi}}(t)}{h}\right) \le y\right\}.$$
Of course $\Omega_i(h,y)$ are pairwise disjoint up to a countable set, $\Omega_i(h,y)\subset \Omega(h,y)$ and 
$$m(\Omega(h,y)\setminus \cup_i \Omega_i(h,y))=0.$$
Then $$ m(\Omega(h,y))= \sum_{i \in \Delta} m(\Omega_i(h,y)).$$
Given $\epsilon \in (0,1)$, choose $i_0$ such that 
$$m(\cup_{i >  i_0} [c_i,d_i]) < \epsilon \, m([a,b]) .$$
For every $i \leq  i_0$ there exists $h_i > 0$ such that for every $|h| < h_i$ we have
$$\frac{m(\Omega_i(h,y))}{m([c_i,d_i])}$$
belongs to $O(y, \gamma+\epsilon)$. Let $\hat{h} =\min_{i \leq i_0} h_i$. Let
$$U_{i_0}(h,y)= \cup_{i\leq i_0} \Omega_i(h,y),$$
and
$$W_{i_0}(h,y)=\cup_{i\leq i_0} [c_i,d_i].$$
Then for $|h| < \hat{h}$ we have
$$\frac{m(U_{i_0}(h,y))}{m(W_{i_0}(h,y))} = \sum_{i \leq i_0} \frac{m([c_i,d_i])}{m(W_{i_0}(h,y))}  \frac{m(\Omega_i(h,y))}{m([c_i,d_i])} $$
is a convex combination of elements of $O(y, \gamma+\epsilon)$, then it belongs to $O(y, \gamma+\epsilon)$. We conclude that
\begin{align} 
& (\mathcal{D}_{\mathcal{N}}(y)- \gamma -2\epsilon) m([a,b]) \nonumber \\  
&\le (\mathcal{D}_{\mathcal{N}}(y)- \gamma -\epsilon) ( m([a,b]) -  \epsilon\, m([a,b]))  \nonumber \\ 
&\le (\mathcal{D}_{\mathcal{N}}(y)- \gamma -\epsilon) m(W_{i_0}(h,y))   \nonumber \\ 
&\le  m(U_{i_0}(h,y)) \nonumber \\ 
&\leq     m(\Omega(h,y)) \nonumber \\
&\le m(U_{i_0}(h,y))+ \epsilon \,m([a,b]) \nonumber \\ 
&\le (\mathcal{D}_{\mathcal{N}}(y)+ \gamma+\epsilon) m(W_{i_0}(h,y)) +  \epsilon \,m([a,b]) \nonumber \\
&\le (\mathcal{D}_{\mathcal{N}}(y)+ \gamma+2\epsilon ) m([a,b]).
\end{align} 
\end{proof}

\begin{proof}[Proof of Theorem \ref{main}] Remember that 
$$t\mapsto L_t$$
is a continuous and positive function on $[a,b]$. Given $\gamma > 0$, let $Q_1 > 0$ be as in Proposition \ref{main_step}.  Then there are  $k > 0$ and  intervals $[c_i,d_i]$, $i \leq  k=k(\gamma)$,  which forms a partition $\mathcal{F}$ of $[a,b]$ and 
$$\sup_{t, t' \in [c_i,d_i]} \big| 1- \frac{L_{t'} }{L_t}    \big| < Q_1$$
for every $i \leq k$. Then the restrictions of the family $f_t$ to each one of the intervals $[c_i,d_i]$ satisfy the assumptions of Proposition \ref{main_step}.  Now it remains to apply  Lemma \ref{dec} to the full family and the partition  $\mathcal{F}$. Since $\gamma > 0$ is arbitrary we completed the proof of  Theorem \ref{main}. 
\end{proof}

%----------------------------------------------%
%  Controlling how the orbit of the critical point moves  %
%----------------------------------------------%

\section{Controlling how the orbit of the critical point moves}\label{sec_controle}

The aim of this section is to prove Proposition \ref{main_prop}. Let us denote by $I=[0,1]$ the interval of parameters.

\begin{remark}\label{const2}
In Schnellmann \cite[Lemma 4.4]{sch2}  it is proven  that there is $C_1>0$ such that if $N\ge1$, $|t_1-t_2|<1/N$ and if $\omega_1 \in \p_N(t_1)$ and $\omega_2 \in \p_N(t_2)$ have the same combinatorics up to the $(N-1)$-th iteration then
$$
\left|\frac{Df_{t_1}^N(x_1)}{Df_{t_2}^N(x_2)}\right|\le C_1,
$$
for all $x_1 \in \omega_1$ and $x_2 \in \omega_2$.

We also observe that if $x,y \in \omega \in  \p_{N}(t)$, then by the bounded distortion lemma, there is $C_2 > 0$ such that
$$
\left|\frac{Df_{t}^j(x)}{Df_{t}^j(y)}\right|\le C_2,
$$
for every $j\leq N$. Let 
$$
\tilde{M} = \sup_{0\le j \le j_0}\sup_{t\in [a,b]} |\partial_tf_t^{j}(c)|,
$$ 
and let us define
\begin{equation}\label{def_c3}
C_3 = \max\{C, \tilde{M}\},
\end{equation} 
where $C$ is the constant given by the transversality condition (see Eqs  (\ref{const_t}) and (\ref{const_t2})) and 
$$C_4=\sup_{t\in [0,1]} \sup_{x \in [0,1]} |\partial_t f_t(x)|.$$

\end{remark}

To prove Proposition \ref{main_prop}  we will need

\begin{lemma}\label{lemmaB}  Let $N_3 \in \mathbb{N}$ and  $\omega  \in \p_{N_3}$ be  such that 
$$|\omega|\leq \frac{1}{{N_3}}.$$
If $t \in \omega$ and  
 \begin{equation} \label{deep} dist(t, \partial \omega)> (M+1)|h|,\end{equation} 
 where 
\begin{equation}\label{def_M}
M>  \max\{ C_1C_3C_4, C_1^2C_2 C_3^2 \}
\end{equation}
 Then  
 \begin{equation}\label{semc12} c \notin I_{i,j}(t,h) \end{equation}
for all $0\le j < {N_3}$ and $0\le i < {N_3}-j$, where $I_{i,j}(t,h)$ is the smallest interval that contains the set
$$
\{ f_{t+h}^{i+j+1}(c), f_{t}^{i+j+1}(c),  f_{t+h}^i\circ f_t^{j+1}(c), f_{t+h}^{i+1}\circ f_{t}^{j}(c)\}.
$$
\end{lemma}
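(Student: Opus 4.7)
The plan is to treat each of the four points in the set defining $I_{i,j}(t,h)$ as a perturbation of $f_t^n(c)$ where $n := i+j+1 \le N_3$, bound the spread of those perturbations from above by a quantity of order $|h|\,|Df_t^{n-1}(f_t(c))|$, and bound $|f_t^n(c)-c|$ from below by a quantity of the same form multiplied by $(M+1)$ divided by a constant. The hypothesis $M>\max\{C_1C_3C_4,\,C_1^2C_2C_3^2\}$ is then exactly what is needed for the lower bound to beat the diameter, forcing $c\notin I_{i,j}(t,h)$.

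\textbf{Diameter bound.} The four points arise from $f_t^n(c)$ by substituting $f_{t+h}$ for $f_t$ on various suffixes of the orbit. I would estimate their pairwise differences by the Mean Value Theorem in the parameter, using the derivative formula
\[
\partial_s f_s^m(y)=\sum_{k=0}^{m-1} Df_s^{m-1-k}(f_s^{k+1}(y))\,v_s(f_s^k(y)),
\]
together with the bounds $|v_s|\le C_4$ and the normalised-sum bound $|Q_m(s)|\le C_3$ coming from Remark~\ref{const2}, plus the distortion constants $C_1$ (comparing $f_t$ and $f_{t+h}$ on same-combinatorics cylinders, applicable since $\mathrm{dist}(t,\partial\omega)>(M+1)|h|$ forces $t+h\in\omega$) and $C_2$ (distortion along a fixed $f_t$-orbit). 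The outcome is an estimate of the form
\[
\mathrm{diam}\,I_{i,j}(t,h)\le K\,|h|\,|Df_t^{n-1}(f_t(c))|
\]
for a constant $K$ expressible in $C_1,C_2,C_3,C_4$.

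\textbf{Lower bound on $|f_t^n(c)-c|$.} Let $\omega_n\in\p_n$ be the cylinder of $\p_n$ containing $t$. Since $\omega\subset\omega_n$, we have $\mathrm{dist}(t,\partial\omega_n)\ge\mathrm{dist}(t,\partial\omega)>(M+1)|h|$. In the principal case one endpoint $s^{*}\in\partial\omega_n$ satisfies $f_{s^{*}}^n(c)=c$, which is precisely what ``cutting at step $n$'' in the construction of $\p_n$ means. Applying the Mean Value Theorem to the $C^2$ map $u\mapsto f_u^n(c)$ on $[t,s^{*}]$, and using a lower bound
\[
|\partial_u f_u^n(c)|\ge \frac{|Df_t^{n-1}(f_t(c))|}{C_1C_2C_3}
\]
obtained from transversality (the partial sum $Q_n$ is uniformly bounded below up to an exponentially small tail, for $n$ past the threshold $j_0$ of condition (I)) and bounded distortion along the parameter interval, I would obtain
\[
|f_t^n(c)-c|\ge\frac{(M+1)|h|}{C_1C_2C_3}\,|Df_t^{n-1}(f_t(c))|.
\]
Combining with the diameter bound and the choice of $M$ gives $|f_t^n(c)-c|>\mathrm{diam}\,I_{i,j}(t,h)$, hence $c\notin I_{i,j}(t,h)$.

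\textbf{Main obstacle.} The genuinely delicate situation is when no cutting occurs at step $n$, so that $\omega_n=\omega_{n-1}$ and no endpoint of $\omega_n$ sends $c$ to $c$ under the $n$-th iterate. One must then descend to the largest $m<n$ where cutting did occur, apply the MVT argument at level $m$, and transport the estimate from $f_t^m(c)-c$ up to $f_t^n(c)-c$ via the factorisation $f_t^n=f_t^{n-m}\circ f_t^m$ together with a bounded-distortion comparison, so as to recover the correct power $|Df_t^{n-1}(f_t(c))|$. Shepherding the constants through this recursion so that they still assemble into the prescribed threshold $M$ is the technical heart of the proof; the very small $n$ case (where $Q_n$ has not yet stabilised) is handled directly by hypothesis (VI).
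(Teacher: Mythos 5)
Your high-level plan (upper bound on $\operatorname{diam} I_{i,j}$, lower bound on the distance from the ``clean'' orbit to $c$, comparison driven by the choice of $M$) is the right shape, and the paper's argument ultimately establishes exactly such a comparison. But the way you set up the lower bound, and the way you treat the diameter bound, both contain genuine gaps that the paper's proof is specifically designed to avoid.

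For the lower bound, you work with the cylinder $\omega_n\in\p_n$ containing $t$ and try to produce a boundary parameter $s^*$ with $f_{s^*}^n(c)=c$. As you yourself observe, $\omega_n$ may not be cut at level $n$; but your proposed remedy --- descend to the largest $m<n$ at which cutting occurs, prove $|f_t^m(c)-c|$ is large, then ``transport'' via $f_t^n=f_t^{n-m}\circ f_t^m$ --- does not work. A lower bound on $|f_t^m(c)-c|$ gives no lower bound on $|f_t^n(c)-c|$: the orbit may recur arbitrarily close to $c$ after $n-m$ more iterates. The paper sidesteps all of this by never leaving the $\p_{N_3}$-cylinder $\omega$ given in the hypothesis. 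Since $\omega\in\p_{N_3}$, we have $f_s^k(c)\neq c$ for all $s\in\omega$ and all $1\le k\le N_3$, so for every $n\le N_3$ the (monotone, by condition (I)) image $(f_{s_1}^n(c),f_{s_2}^n(c))$ of $\omega=(s_1,s_2)$ under $\theta\mapsto f_\theta^n(c)$ excludes $c$; no cutting at level $n$ is needed. The Mean Value Theorem then converts the hypothesized gap $\operatorname{dist}(t,\partial\omega)>(M+1)|h|$ directly into the estimate $|f_{s_\ell}^{n}(c)-f_s^{n}(c)|\ge \tfrac{M}{C_1C_3}|Df_t^{n-1}(f_t(c))|\,|h|$ for $s\in[t,t+h]$, which is precisely the lower bound you want, obtained without ever discussing $|f_t^n(c)-c|$ itself.

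For the diameter bound, your write-up invokes the distortion constants $C_1,C_2$ to control, e.g., $|f_{t+h}^{i}(f_t^{j+1}(c))-f_{t+h}^{i}(f_{t+h}^{j+1}(c))|$. But bounded distortion along the $f_{t+h}$-orbit of length $i$ only applies if $f_t^{j+1}(c)$ and $f_{t+h}^{j+1}(c)$ share combinatorics for those $i$ iterates --- and that is exactly the assertion $c\notin I_{i',j}(t,h)$ for $i'<i$ that you are trying to prove. The proof therefore cannot be closed in one sweep as you sketch; it has to be an induction on $i$ (with base case handled by condition (VI) for small indices and by the image-interval argument for $j>j_0$), where the induction hypothesis supplies precisely the matching combinatorics needed to run the distortion estimate at the next level. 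This induction is not an optional refinement: without it the constants $C_1,C_2$ have no justification, and the diameter bound is circular.

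The other ingredients of your proposal --- the parameter-derivative formula, the role of $C_3,C_4$, the observation that the numerical hypothesis on $M$ is exactly what makes the lower bound beat the upper bound --- are all correct and match the paper. What is missing is (a) replacing the $\p_n$-cylinder/cut-point mechanism by the $\p_{N_3}$-cylinder $\omega$ and its $n$-th image interval, which makes the lower bound unconditional, and (b) organizing the whole argument as an induction on $i$, which is the only way to make the distortion-based diameter estimate legitimate.
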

\begin{proof} Let $j_0$ be as defined in condition $(I)$ (see Definition \ref{goodf}). If $j>j_0$ define $i_1=0$ and if $0\le j < j_0$ define $i_1 = j_0$.
First of all, we observe that if $0\le j \le j_0$ and $0\leq i \le j_0$ then Eq. (\ref{semc12})  follows from condition $(VI)$. In particular  
\begin{equation}\label{ooo}  c \notin I_{i,j}(t,h) \text{  for every } i < i_1. \end{equation}  
Hence, it is left to consider the cases when 
$$
j_0 < j < N_3 \mbox{\; and \;}  0 <i \le N_3-j
$$
and
$$
0 < j < j_0 \mbox{\; and \;}  j_0 <i \le N_3-j.
$$
 We claim that 
\begin{equation}\label{contido} 
c \notin  I_{i_1,j}.
\end{equation}
Indeed, if $0\le j\le j_0$, it follows from condition $(VI)$. Now, if $j_0 < j < N_3$, due to condition ($I$), Eqs. (\ref{const_t}) and (\ref{const_t2}) the maps
$$
\theta \in \omega \to f_\theta^k(c) \in [0,1]
$$
are diffeomorphisms on their images for every $j_0< k\leq {N_3}$ and they  do not contain the critical point in its image, for all $j_0<k<{N_3}$, $\theta \in \omega$.  In particular if $\omega=(s_1,s_2)$ then 
\begin{equation}\label{semc} 
c\notin \{f_\theta^k(c)\colon \theta \in \omega\} = (f_{s_1}^k(c),f_{s_2}^k(c))
\end{equation}
for every $j_0< k < {N_3}$. Therefore,
$$
c\notin [f_{t}^k(c),f_{t+h}^k(c)].
$$
By the Mean Value Theorem and Remark \ref{const2}, for every $j < {N_3}$ 
$$
|f_{t}^{j+1}(c) - f_{t+h}^{j+1}(c)| = |\partial_\theta f_{\theta}^{j+1}(c)|_{\theta=\theta_1}||h|\le C_3 |Df_{\theta_1}^j(f_{\theta_1}(c))||h| \leq  C_3C_1 |Df_t^j(f_t(c))||h|.
$$
Moreover,
\begin{equation}\label{length1}
|f_{t+h}(f_{t}^{j}(c)) - f_{t}(f_{t}^{j}(c))|\le  |\partial_\theta f_{\theta}(f_t^{j}(c))|_{\theta=\theta_2}||h|\leq  C_4|h|.
\end{equation}
By assumption, $d([t,t+h], \partial \omega)>M|h|$. Thus,
\begin{equation}\label{length2}
|\omega| \ge (2M +1)|h|.
\end{equation}
If $\partial \omega = \{s_1,s_2\}$ and $s\in [t,t+h]$ then 
\begin{eqnarray}\label{length3}
|f_{s_i}^{k+1}(c) -f_s^{k+1}(c)| &=& |\partial_\theta  f_{\theta}^{k+1}(c)|_{\theta=\theta_3}||s_i-s|\nonumber \\
&\ge& \frac{1}{C_3}|Df_{\theta_3}^k(f_{\theta_3}(c))|M|h|  \nonumber \\
&\geq& \frac{1}{C_1 C_3}|Df_{t}^k(f_{t}(c))|M|h|
\end{eqnarray}
for every $k < {N_3}$. Taking $k=j$ we obtain 
\begin{eqnarray} 
|f_{t+h}(f_{t}^{j}(c)) - f_t(f_{t}^{j}(c))|&\leq& C_4 |h|\leq \frac{M}{C_1C_3} |h| \nonumber \\
&\leq& \frac{M}{C_1C_3}  |Df_{t}^j(f_{t}(c))||h|\leq |f_{s_i}^{j+1}(c) -f_t^{j+1}(c)|.  
\end{eqnarray}
Hence,
\begin{equation}
[f_{t+h}(f_{t}^{j}(c)),f_t(f_{t}^{j}(c))] \subset  (f_{s_1}^{j+1}(c) ,f_{s_2}^{j+1}(c)).
\end{equation}
In particular
$$c \notin I_{0,j}(t,h)=I_{i_1,j}(t,h).$$
 We concluded the proof of our claim. Now fix $0\le j < {N_3}$. We are going to prove by induction on $i$ that, for every $i_1\leq  i <  {N_3}-j$,
\begin{equation}\label{inducao} 
c \notin I_{i,j}(t,h),
\end{equation} 
The case $i=i_1$ follows from Eq. (\ref{contido}). Now suppose that Eq. (\ref{inducao}) holds up to $i$.  Provided that $i\ge i_1$, we have $i+j+1> j_0$. Therefore, by Eq. (\ref{semc}), with $k=i+j+2$, we obtain
$$f_{t+h}^{(i+1)+j}(f_{t+h}(c)) \in (f_{s_1}^{(i+1)+j+1}(c) ,f_{s_2}^{(i+1)+j+1}(c)).$$
And as in Eq. (\ref{length3}) 
\begin{equation}\label{e1}
| f_{s_i}^{(i+1)+j}(f_{s_i}(c))-f_{t+h}^{(i+1)+j}(f_{t+h}(c))| \geq  \frac{1}{C_3C_1}|Df_{t+h}^{(i+1)+j}(f_{t+h}(c))| M|h|
\end{equation}
Moreover by induction assumption and Eq. (\ref{ooo}) , we have for every $0 \leq  k\leq i$ 
$$c \not\in I_{k,j}(t,h)$$
Thus  the points 
$$ f_{t+h}^{j+1}(c) \text { and }  f_{t}^{j+1}(c) $$
have the same combinatorics up to $i$ iterations of the map $f_{t+h}$. Then by Remark \ref{const2}
\begin{align}\label{e2}
|f_{t+h}^{i+1}(f_{t}^{j+1}(c))-f_{t+h}^{i+1}(f_{t+h}^{j+1}(c))| &\leq C_2   |Df_{t+h}^{i+1}(f_{t+h}^{j+1}(c))||f_{t}^{j+1}(c)-f_{t+h}^{j+1}(c)| \nonumber \\
&\le C_2   |Df_{t+h}^{i+1}(f_{t+h}^{j+1}(c))| |\partial_\theta f_{\theta}^{j+1}(c)|_{\theta=\theta_4}||h| \nonumber \\
&\le C_3  C_2   |Df_{t+h}^{i+1}(f_{t+h}^{j+1}(c))| |Df_{\theta_4}^{j}(f_{\theta_4}(c))||h| \nonumber \\
&\le C_1 C_2 C_3 | Df_{t+h}^{i+1}(f_{t+h}^{j+1}(c))| | Df_{t+h}^{j}(f_{t+h}(c))||h| \nonumber\\
&\le C_1 C_2 C_3 |Df_{t+h}^{(i+1)+j}(f_{t+h}(c))||h|
\end{align}
and
$$f_{t}^{j}(c) \text{ and }  f_{t+h}^{j}(c)$$
have the same combinatorics up to $i+1$ iterations of the map $f_{t+h}$. Then by Remark \ref{const2}
\begin{align}\label{e3}
|f_{t+h}^{(i+1)+1}(f_{t}^{j}(c))-f_{t+h}^{(i+1)+1}(f_{t+h}^j(c))|& \leq C_2  |Df_{t+h}^{(i+1)+1}(f_{t+h}^{j}(c))||f_{t}^{j}(c)-f_{t+h}^{j}(c)| \nonumber \\
&\le C_2  |Df_{t+h}^{(i+1)+1}(f_{t+h}^{j}(c))| |\partial_\theta f_{\theta}^{j}(c)|_{\theta=\theta_5}|  |h| \nonumber \\
&\le  C_2 C_3   |Df_{t+h}^{(i+1)+1}(f_{t+h}^{j}(c))| |Df_{\theta_5}^{j-1}(f_{\theta_5}(c))|||h| \nonumber \\
&\le  C_1 C_2 C_3    |Df_{t+h}^{(i+1)+1}(f_{t+h}^{j}(c))|  |Df_{t+h}^{j-1}(f_{t+h}(c))||h| \nonumber \\
&\le  C_1 C_2 C_3    |Df_{t+h}^{(i+1)+j}(f_{t+h}(c))||h|.
\end{align}
Since 
$$C_1 C_2 C_3 < \frac{M}{C_1 C_3},$$
Eqs. (\ref{e1}), (\ref{e2}) and (\ref{e3}) imply that 
$$\{ f_{t+h}^{(i+1)+1}(f_{t}^{j}(c)), f_{t+h}^{i+1}(f_{t}^{j+1}(c)) \} \subset (f_{s_1}^{(i+1)+j+1}(c),f_{s_2}^{(i+1)+j+1}(c)).$$
In particular, $c\notin I_{i,j}(t,h)$ for all $0\le j< N_3$ and $i_1< i <N_3-j$.
\end{proof}

To prove Proposition \ref{main_prop} we need to show that,  for each given $h\neq 0$,  for most of the parameters $ t \in [0,1]$  we can find a cylinder $\omega \in \p_{N_3(t,h)}$ where $[t,t+h]$ is deep inside $\omega$ (see Eq. (\ref{deep}) ) and moreover $N_3(t,h)$ satisfies  Eq. (\ref{est_N3}). To this end, for most $t$  we will find $\omega$, with $t \in \omega$,  in such way that $|\omega|$ is quite large with respect to $|h|$ and $N_3(t,h)$ satisfies  Eq. (\ref{est_N3}), but not necessarily the whole interval $[t,t+h]$ is deep inside  $\omega$.  Then we will use a simple argument to conclude that for most of the parameters $t$ this indeed occurs.  

Let $\p_j$ be the partition of level $j>j_0$. Observe that for each cylinder $\omega \in \p_j$
$$
|\omega| \le C_3 \left(\frac{1}{\lambda}\right)^j,
$$
where $C_3$ is the constant given by Eq. (\ref{def_c3}).

Let $N>1$ and  define $j=j(N)$ as
$$
j = \left\lfloor \frac{\log(C_3N)}{\log \lambda}\right\rfloor +1.
$$
Note that the cylinders of $\p_{j}$ divide the interval of parameters $I$ in subintervals of length shorter than $1/N$. Let $J$ be one of these intervals in $\p_j$. And we will denote by $t_R$ the right boundary point of $J$.

Observe that, by defintion, there is an integer $i$, $0\le i< j$ such that $$x_i(t_R)=f_{t_R}^{i+1}(c)=c.$$

Fix an integer $\tau$ such that $2^{1/\tau}\le \sqrt{\lambda}$.

\begin{definition}[The sets $E_{N,J}$]  
Let $J \in \p_j$, $j=j(N)$. Let  $E_{N,J}$ be the family of all intervals $\omega \in \p_N$ such that  for every $k$ satisfying 
$$
0 \le k \le \left\lfloor \frac{\mathcal{K} \log{N}}{\tau} \right\rfloor  
$$
and for $\tilde{\omega}$ satisfying 
$$
\tilde{\omega} = (a,b) \in \p_{\Nivel + q},  \text { with } \omega \subset \tilde{\omega}\subset J,
$$
where
$$
q=\min \{ (k+1)\tau, \lfloor \mathcal{K} \log N \rfloor \},
$$
one of the following statements  holds: 
\begin{itemize} 
\item[A.]  For every $i$ satisfying 
$$
\Nivel +k\tau\le i < \Nivel +q
$$
we have
$$
 x_{i}(a) \neq c.
$$
\item[B.]
For every $i$ satisfying 
$$
\Nivel +k\tau\le i < \Nivel +q
$$
we have
$$
 x_i(b) \neq c.
$$
\end{itemize} 
\end{definition}
\noindent Define  
$$
E_{N}=\bigcup_{J \in \p_j}  E_{N,J}.
$$

Let us denote by $|E_N|$ the sum of the lengths of the intervals in this family.

Given $n\in \N$ and $\tilde{\omega} \in \p_n$  define
$$\delta_{t} :=   \min \{  |f_t^i(c) - f_t^j(c)|\colon \   f_t^i(c) \neq  f_t^j(c) \ i,j \leq \tau. \}$$
$$\delta_{\omega} :=  \frac{ \min_{t\in \overline{\omega}} \delta_t}{2}.$$
Notice that if  $\tilde{\omega} \supset \omega$ then $\delta_{\tilde{\omega}}\leq \delta_{\omega} $.

Let $C_L$ be such that 
$$|f_t^i(c) - f_s^i(c)|\leq C_L |t-s|$$
for all $i\leq \tau$, $s,t \in [0,1]$.

\begin{lemma}\label{compare} 
There is $C > 0$ such that the following holds.  If  $\tilde{\omega} \in \p_i$, $i>j_0$, with $|\tilde{\omega}| < 1/i$ and $t \in \tilde{\omega}$ then
 \begin{equation}\label{const33}  \frac{1}{C}\  \frac{|x_i(\tilde{\omega})|}{|Df_t^{i}(f_t(c))|}\leq  |\tilde{\omega}| \leq C \frac{|x_i(\tilde{\omega})|}{|Df_t^{i}(f_t(c))|}.\end{equation}
Moreover, if  $\omega \in \p_N\setminus E_{N}$  then there exists $i$ satisfying    
$$ \Nivel\leq i \leq N$$
such that $\omega \subset \tilde{\omega} \in \p_i$ and if
$$C_L |\tilde{\omega}| <      \delta_{\tilde{\omega}}$$ then 
$$|x_i(\tilde{\omega})|\geq \delta_{\tilde{\omega}}$$
 and
 \begin{equation}\label{const}  \frac{1}{C}\  \frac{\delta_{\tilde{\omega}}}{|Df_t^{i}(f_t(c))|}\leq  |\tilde{\omega}| \leq C \frac{1}{|Df_t^{i}(f_t(c))|} \end{equation}
for every $t \in \tilde{\omega}$.
\end{lemma}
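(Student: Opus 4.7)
My plan is to treat the two assertions of the lemma separately.

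For the first assertion, I would argue by the Mean Value Theorem in the parameter: there exists $t^* \in \tilde{\omega}$ with
$$|x_i(\tilde{\omega})| = |\partial_{t'} f_{t'}^{i+1}(c)|_{t'=t^*}| \cdot |\tilde{\omega}|.$$
Since $\tilde{\omega} \in \p_i$, the orbit $f_{t'}^{k+1}(c)$ avoids $c$ for $k < i$ and $t' \in \tilde{\omega}$, so $\partial_{t'} f_{t'}^{i+1}(c)$ is well defined. Condition (I) of Definition \ref{goodf}, via (\ref{const_t}) and (\ref{const_t2}), gives a uniform two-sided comparison
$$|\partial_{t'} f_{t'}^{i+1}(c)| \asymp |Df_{t^*}^{i}(f_{t^*}(c))|$$
with constants independent of $i$. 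Because $|\tilde{\omega}|<1/i$, the points $t,t^*$ are in the same parameter cylinder of level $i$, so the parameter-distortion estimate of Remark \ref{const2} (constant $C_1$) yields $|Df_{t^*}^{i}(f_{t^*}(c))| \asymp |Df_t^{i}(f_t(c))|$. Combining these comparisons establishes (\ref{const33}).

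For the second assertion, I would unpack what $\omega \in \p_N \setminus E_N$ means. By negation of the condition defining $E_{N,J}$, there exist $J \in \p_j$ containing $\omega$, an integer $k$ with $0 \le k \le \lfloor \mathcal{K}\log N/\tau\rfloor$, and a cylinder $\tilde{\omega} = (a,b) \in \p_{\Nivel + q}$ (with $q=\min\{(k+1)\tau,\lfloor\mathcal{K}\log N\rfloor\}$) with $\omega \subset \tilde{\omega} \subset J$, such that \emph{both} alternatives (A) and (B) fail. So there exist integers $i_a, i_b \in [\Nivel + k\tau, \Nivel + q)$ with $x_{i_a}(a) = c$ and $x_{i_b}(b)=c$. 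Setting $i=\max\{i_a,i_b\}$ we automatically get $i \in [\Nivel, N]$.

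It remains to lower-bound $|x_i(\tilde{\omega})|$. I claim $i_a \ne i_b$: since $\tilde{\omega}\in\p_{\Nivel+q}$ and $i_b < \Nivel+q$, the map $t\mapsto x_{i_b}(t)$ is strictly monotone on $\tilde{\omega}$, so $x_{i_b}(a)=x_{i_b}(b)$ is impossible; hence WLOG $i_a<i_b\le i_a+\tau$. Using $f_a^{i_a+1}(c)=c$,
$$x_{i_b}(a)=f_a^{i_b+1}(c)=f_a^{i_b-i_a}(c),\qquad x_{i_b}(b)=c,$$
and the monotonicity argument above also forces $f_a^{i_b-i_a}(c)\ne c$. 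Then the definition of $\delta_a$ yields
$$|x_i(\tilde{\omega})|\ge |x_{i_b}(a)-x_{i_b}(b)|=|f_a^{i_b-i_a}(c)-c|\ge\delta_a\ge 2\delta_{\tilde{\omega}}.$$
The hypothesis $C_L|\tilde{\omega}|<\delta_{\tilde{\omega}}$ provides the safety buffer that transfers this lower bound from the boundary point $a$ to a uniform lower bound on the whole cylinder via the Lipschitz bound $C_L$ (so that $|f_t^{i_b-i_a}(c)-c|$ stays comparable to $\delta_{\tilde{\omega}}$ for all $t\in\tilde{\omega}$, in particular at the reference parameter needed to invoke Part 1). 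Combining $\delta_{\tilde{\omega}}\le|x_i(\tilde{\omega})|\le 1$ with (\ref{const33}) applied to $\tilde{\omega}$ (whose hypothesis $|\tilde{\omega}|<1/i$ holds for large $N$ by the exponential contraction $|\tilde{\omega}|\le C_3\lambda^{-(\Nivel)}$) yields (\ref{const}).

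The main obstacle is the combinatorial part: correctly reading off the pair $(i_a,i_b)$ from the failure of $E_{N,J}$ and showing, via monotonicity of $t\mapsto x_{i_b}(t)$ on $\tilde{\omega}$, that both $i_a\ne i_b$ and $f_a^{i_b-i_a}(c)\ne c$ — so that the returns of the critical orbit genuinely produce an interval of definite size rather than cancelling out at the endpoints.
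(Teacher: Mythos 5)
Your proof of (\ref{const33}) — Mean Value Theorem, condition (I), and the distortion estimate of Remark~\ref{const2} — follows the paper's argument and is fine.

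In the second part there is a genuine gap: a mismatch between your chosen level $i$ and the cylinder $\tilde{\omega}$. Negating the $E_{N,J}$ condition produces $\tilde{\omega}=(a,b)\in\p_{\Nivel+q}$ and indices $i_a,i_b$ with $\Nivel+k\tau\le i_a,i_b<\Nivel+q$ and $x_{i_a}(a)=c=x_{i_b}(b)$. You then set $i=\max\{i_a,i_b\}$, which is \emph{strictly less} than $\Nivel+q$. But the lemma asserts $\tilde{\omega}\in\p_i$, and you invoke (\ref{const33}) for $\tilde{\omega}$ at level $i$, which requires precisely that. Your $\tilde{\omega}$ lies in the strictly finer partition $\p_{\Nivel+q}$, so $\tilde{\omega}\notin\p_i$ in general. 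One cannot repair this by replacing $\tilde{\omega}$ with the element of $\p_{i_b}$ containing it: since $x_{i_b}(b)=c$, the point $b$ only becomes a cylinder endpoint at levels $\geq i_b+1$, so the $\p_{i_b}$-cylinder has a different right endpoint and your computation $x_{i_b}(b)=c$ no longer applies to it. The correct choice is $i=\Nivel+q$, and the lower bound must then be established for $|x_{\Nivel+q}(\tilde{\omega})|$ rather than $|x_{i_b}(\tilde{\omega})|$. The paper does this by writing $x_{\Nivel+q}(a)=f_a^{n_a}(c)$ and $x_{\Nivel+q}(b)=f_b^{n_b}(c)$ with $n_a=\Nivel+q-i_a$, $n_b=\Nivel+q-i_b$, both $\le\tau$ and $n_a\ne n_b$, and then applying a triangle inequality:
$$|f_a^{n_a}(c)-f_b^{n_b}(c)|\geq|f_a^{n_a}(c)-f_a^{n_b}(c)|-|f_a^{n_b}(c)-f_b^{n_b}(c)|\geq 2\delta_{\tilde{\omega}}-C_L|\tilde{\omega}|\geq\delta_{\tilde{\omega}}.$$
Note that the hypothesis $C_L|\tilde{\omega}|<\delta_{\tilde{\omega}}$ enters exactly here, to absorb the error term $|f_a^{n_b}(c)-f_b^{n_b}(c)|\le C_L|a-b|$ — not, as you suggest, to propagate the bound from the endpoint $a$ to other parameters in $\tilde{\omega}$. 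Your observation that $i_a\ne i_b$ via strict monotonicity of $x_{i_b}$ on $\overline{\tilde{\omega}}$ is correct and is the same fact the paper uses (without comment) to conclude $n_a\ne n_b$.
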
 
\begin{proof}  If $t\in \tilde{\omega} \in \p_k$ then by the Mean Value Theorem for some $\theta_1 \in \tilde{\omega}$ 
$$|x_k(\tilde{\omega})|= |\partial_\theta f_\theta^{k+1}(c)|_{\theta=\theta_1}| |\tilde{\omega}|,$$
then
$$\frac{ |Df_{t}^{k}(f_{t}(c))| |\tilde{\omega}|}{C_1C_3}  \leq \frac{ |Df_{\theta_1}^{k}(f_{\theta_1}(c))| |\tilde{\omega}|}{C_3}  \leq  |x_k(\tilde{\omega})|$$
and
$$ |x_k(\tilde{\omega})| \leq  C_3 |Df_{\theta_1}^{k}(f_{\theta_1}(c))| |\tilde{\omega}| \leq C_1 C_3  |Df_{t}^{k}(f_{t}(c))| |\tilde{\omega}|,$$
therefore, Eq. (\ref{const33}) holds. 
Now assume $\omega \in \p_N\setminus E_{N}$. Then  there exists  
 $k$ satisfying 
$$0 \le k \le \left\lfloor\frac{\mathcal{K} \log{N}}{\tau}\right\rfloor $$
and  $$\tilde{\omega} = (a,b) \in \p_{\Nivel + q},$$
where $q=\min \{ (k+1)\tau, \lfloor \mathcal{K} \log N \rfloor \},$ such that  $x_{i_a}(a)=c=x_{i_b}(b)$, where 
$$\Nivel +k\tau    \leq i_a, i_b <  \Nivel +q, $$
in particular
$$x_{\Nivel +q}(\tilde{\omega})= (f_a^{n_a}(c), f_b^{n_b}(c)),$$
where 
$$0\leq n_a,n_b < \tau, \text { with }  n_a \neq n_b.$$
Thus,
\begin{eqnarray} |x_{\Nivel +q}(\tilde{\omega})|&=& |f_a^{n_a}(c)- f_b^{n_b}(c)| \nonumber \\ 
&\geq&  |f_a^{n_a}(c)- f_a^{n_b}(c)| -  | f_a^{n_b}(c)- f_b^{n_b}(c)| \nonumber \\
&\geq& 2 \delta_{\tilde{\omega}} - C_L |a-b| \geq \delta_{\tilde{\omega}}.
\end{eqnarray}
\end{proof}

Since $\delta_{\tilde{\omega}}\geq 0$ depends only on a fixed finite number of  iterations of the family $f_t$, it will be easy to give positive lower bounds to it  that hold for most of the intervals $\tilde{\omega}$. Indeed define
$$\Lambda_{N_0}^\delta=\{ t \in [0,1] \colon \text{  for every $N\geq N_0$ if   }   t \in \omega \in \p_{N-2\lfloor \mathcal{K} \log N \rfloor} \text{ then } \delta_\omega > \delta\}. $$
Note that $\Lambda_{N_0}^\delta \subset \Lambda_{N_0+1}^\delta$. Moreover $\delta' < \delta$ implies $\Lambda_{N_0}^{\delta'}\supset \Lambda_{N_0}^\delta$. 

\begin{lemma} 
Given  $\gamma > 0$ there exists $\delta >0$ such that  
$$\lim_{N_0\rightarrow \infty} |\Lambda_{N_0}^{\delta}|  \geq 1 -\gamma.$$
 \end{lemma}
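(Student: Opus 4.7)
The plan is to show that $t \mapsto \delta_t$ is continuous and strictly positive outside a closed set $T \subset [0,1]$ of Lebesgue measure zero, and then to exploit the fact that the cylinders $\omega \in \p_{N-2\lfloor \mathcal{K}\log N\rfloor}$ have length bounded by $C_3 \lambda^{-(N-2\lfloor\mathcal{K}\log N\rfloor)}$, which tends to $0$ uniformly as $N\to\infty$. For $t$ off the exceptional set with $\delta_t$ appreciably positive, once $N$ is large the cylinder containing $t$ will lie in a tiny neighborhood on which $\delta_s$ is still large, and the condition $\delta_\omega > \delta$ will be automatic.

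First I will introduce
$$ T := \bigcup_{\substack{0\le i,j \le \tau\\ i \neq j}}\{t \in [0,1]\colon f_t^i(c)=f_t^j(c)\}. $$
Each term in this finite union is the zero set of a $C^2$ function on $[0,1]$. By the same argument used in the proof of Proposition \ref{cut_good} (relying on \cite{bs2}) to show that the parameters at which $c$ is periodic form a countable set, I will argue that each of these zero sets is countable: an equality $f_t^i(c)=f_t^j(c)$ with $i < j \le \tau$ forces $c$ to be preperiodic under $f_t$, and the transversality condition $J(f_t,v_t)\neq 0$ rules out the defining function vanishing on any open interval. Hence $T$ is a closed set of Lebesgue measure zero.

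Next I will verify that $\delta_\cdot$ is continuous and strictly positive on $T^c$. For $t \in T^c$, every pair $(i,j)$ with $i \neq j$, $i,j \le \tau$ satisfies $f_t^i(c)\neq f_t^j(c)$, and by continuity the same holds on an open neighborhood $U \subset T^c$ of $t$; on $U$ the formula defining $\delta_\cdot$ becomes the minimum of a fixed finite family of strictly positive continuous functions. Since $T$ has measure zero, $\{t \in [0,1]\colon \delta_t > 0\}$ has full Lebesgue measure, so given $\gamma > 0$ I pick $\delta > 0$ small enough that $|\{t\in[0,1] \colon \delta_t \le 2\delta\}| < \gamma$.

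For any such $t \in T^c$ with $\delta_t > 2\delta$, continuity provides an open neighborhood $U \subset T^c$ of $t$ on which $\delta_s > 2\delta$. Because the diameters of cylinders in $\p_{N-2\lfloor \mathcal{K}\log N\rfloor}$ tend to $0$ uniformly as $N\to\infty$, for $N$ large (depending on $t$) the closure of the cylinder $\omega \in \p_{N-2\lfloor \mathcal{K}\log N\rfloor}$ containing $t$ will be contained in $U$, so $\delta_\omega = \tfrac12 \min_{s\in\overline\omega}\delta_s > \delta$. This yields $t \in \Lambda_{N_0}^\delta$ for some $N_0=N_0(t)$, hence
$$ \{t \in T^c \colon \delta_t > 2\delta\} \;\subseteq\; \bigcup_{N_0\ge 1}\Lambda_{N_0}^\delta. $$
The left-hand side has measure at least $1-\gamma$, and monotonicity of $N_0 \mapsto \Lambda_{N_0}^\delta$ combined with continuity of measure gives $\lim_{N_0\to\infty}|\Lambda_{N_0}^\delta| \ge 1-\gamma$, completing the proof. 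The main obstacle is the measure-zero claim for $T$; everything else reduces to continuity of finitely many iterates and the uniform shrinking of cylinders, both of which are already in hand.
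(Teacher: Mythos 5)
Your overall strategy matches the paper's: isolate a small exceptional parameter set $T$, use positivity and continuity of $t\mapsto\delta_t$ off $T$, and exploit the uniform shrinking of cylinders to force $\delta_\omega>\delta$ for the cylinder containing a typical $t$. The bookkeeping is slightly rearranged --- you fix $\delta$ via the distribution function of $\delta_t$ and then take a pointwise $N_0(t)$, closing with continuity of measure, whereas the paper fixes a global $N_0$ first and sets $\delta$ as half the minimum of $\delta_\omega$ over the finitely many cylinders at level $N_0-2\lfloor\mathcal{K}\log N_0\rfloor$ whose closure avoids the exceptional set --- but the two presentations are equivalent in effect.

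There is, however, a genuine gap in your justification that $T$ has Lebesgue measure zero. You argue that transversality ``rules out the defining function vanishing on any open interval'' and from this conclude that the closed set $T$ has measure zero. That inference is invalid on its own: the zero set of a $C^2$ function that vanishes on no interval can still have positive measure (a fat Cantor set can be realized as such a zero set). What transversality actually supplies, and what the paper uses, is stronger: at any parameter where $f_t^i(c)=f_t^j(c)$ the $t$-derivative of $t\mapsto f_t^j(c)-f_t^i(c)$ is nonzero (as a consequence of $J(f_t,v_t)\neq 0$), so such parameters are \emph{isolated}, and hence the exceptional set arising from the finitely many pairs $(i,j)$ under consideration is \emph{finite} on the compact parameter interval --- which is exactly the paper's assertion. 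Once you replace the ``no-interval-vanishing implies measure zero'' step with this isolation-of-zeros argument (or simply quote the paper's finiteness claim), the remainder of your proof goes through unchanged.
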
 
\begin{proof} Since $f_t$ is a transversal family, the set of parameters $t$ such that $f_t^i(c)~=f_t^j(c)$ for some $i\neq j$, with $i, j \leq \tau+1$ is finite. Let $t_1, \dots, t_m$ be those parameters. The function $t \rightarrow \delta_t$ is positive and continuous on
$$O=[0,1]\setminus \{ t_1, \dots, t_m\}.$$
Choose $N_0$ large enough such that 
$$\# \{ \omega \in \p_{N_0-2\lfloor \mathcal{K} \log N_0 \rfloor}\colon \overline{\omega} \cap \{ t_1, \dots, t_m\} \neq \emptyset \}\leq 2m.$$
Thus,
$$|\{ \omega \in \p_{N_0-2\lfloor \mathcal{K} \log N_0 \rfloor}\colon \overline{\omega} \subset O \}|\geq 1 - \frac{2C m}{\lambda^{N_0-2\lfloor \mathcal{K} \log N_0 \rfloor}} > 1 - \gamma,$$
provided $N_0$ is large enough. Let
$$\delta:=\frac{1}{2} \min \{ \delta_\omega\colon \ \omega \in \p_{N-2\lfloor \mathcal{K} \log N \rfloor}, \ \overline{ \omega}  \subset O \} .$$
Note that $\delta > 0$ and
$$\Lambda_{N}^\delta \supset \bigcup \{ \omega \in \p_{N-2\lfloor \mathcal{K} \log N \rfloor}\colon \overline{\omega} \subset O \}$$
for every $N \geq N_0$, provided that $N_0$ is large. 
\end{proof}

\begin{proposition}\label{pro_E} 
There exist $\hat{C}_1, \hat{C}_2 > 0$, that do not depend on $\mathcal{K}$,  such that  for every $\mathcal{K}' < \mathcal{K}$  there exists   $K=K(\mathcal{K}') >0$ such that
\begin{equation}
|E_{N}| \le KN^{\hat{C}_2-\hat{C}_1\mathcal{K}' }.
\end{equation}
\end{proposition}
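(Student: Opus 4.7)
The plan is based on a combinatorial reformulation of membership in $E_{N,J}$. Note that a boundary point $s$ of $\tilde\omega\in\p_m|J$ interior to $J$ is characterized (generically uniquely) by an index $i\in[j,m)$ with $x_i(s)=c$. Consequently, when refining $\p_{\Nivel+q_{k-1}}|J$ into $\p_{\Nivel+q_k}|J$, the \emph{new} subdivision points are exactly the parameters with critical-hit index in the window $[\Nivel+k\tau,\Nivel+q_k)$. Conditions A and B at level $k$ forbid the left (resp.\ right) endpoint of $\tilde\omega_k$ from being such a new point, so ``A or B at level $k$'' is equivalent to $\tilde\omega_k$ being a \emph{corner sub-cylinder} of $\tilde\omega_{k-1}$ (i.e., containing one of its endpoints). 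Hence, denoting by $\tilde\omega_{-1}\in\p_{\Nivel}|J$ the cylinder containing $\omega$ and setting $K=\lfloor\mathcal{K}\log N/\tau\rfloor$, one has $\omega\in E_{N,J}$ if and only if the nested chain $\tilde\omega_{-1}\supset\tilde\omega_0\supset\cdots\supset\tilde\omega_K=\omega$ is a corner chain: each $\tilde\omega_k$ is a corner sub-cylinder of $\tilde\omega_{k-1}$.

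Each $\tilde\omega_{k-1}$ admits at most two corner sub-cylinders, so from a fixed $\tilde\omega_{-1}$ there are at most $2^{K+1}$ corner chains, each ending at a distinct $\omega\in\p_N$. By Lemma~\ref{compare} and uniform expansion, every $\omega\in\p_N$ satisfies $|\omega|\le C\lambda^{-N}$. Thus
\[
|E_N|\le\sum_{J\in\p_j}2^{K+1}\,C\,\lambda^{-N}\,\#(\p_{\Nivel}|J)\le 2CC'\cdot 2^K\,\lambda^{-\lfloor\mathcal{K}\log N\rfloor},
\]
where $\#(\p_{\Nivel})\le C'\lambda^{\Nivel}$ (up to a sub-polynomial slack) is deduced by combining condition~(III) with uniform expansion and transversality to count parameter subdivision points.

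The choice $2^{1/\tau}\le\sqrt\lambda$ is the crux that turns this into polynomial decay: $2^K\le N^{\mathcal{K}\log 2/\tau}\le N^{(\mathcal{K}\log\lambda)/2}$ and $\lambda^{-\lfloor\mathcal{K}\log N\rfloor}\le\lambda\,N^{-\mathcal{K}\log\lambda}$, so their product decays like $N^{-(\mathcal{K}\log\lambda)/2}$. Setting $\hat C_1:=(\log\lambda)/2$, independent of $\mathcal{K}$, and choosing any $\mathcal{K}'<\mathcal{K}$, the margin $\mathcal{K}-\mathcal{K}'>0$ absorbs the sub-polynomial slack for all sufficiently large $N$, while finitely many small $N$ are accommodated by enlarging the constant $K=K(\mathcal{K}')$. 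This produces the advertised bound $|E_N|\le K(\mathcal{K}')\,N^{\hat C_2-\hat C_1\mathcal{K}'}$ for a suitable $\hat C_2$.

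The main obstacle is the polynomial-in-$N$ control of $\#(\p_{\Nivel})$: condition~(III) applied directly at the high level $n=\Nivel\asymp N$ produces a super-polynomial factor $\tilde C_\zeta e^{N^\zeta}$, which alone is too coarse. Bridging this gap requires a careful interplay between the corner-chain counting above and derivative-weighted sums at the much smaller scale $j=O(\log N)$, using transversality to pass between parameter-space and dynamical-space estimates. The delicate margin $\mathcal{K}-\mathcal{K}'>0$ in the statement exists precisely to accommodate this residual slack.
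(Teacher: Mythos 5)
Your corner-chain reformulation of membership in $E_{N,J}$ is correct and matches the paper's Lemma~\ref{lemma_card_ENJ} (there proved for $\hat E_{N,t_R,\eta'}$ in phase space); the observation that the choice $2^{1/\tau}\le\sqrt\lambda$ converts the $2^{K+1}$ corner-chain count into a net gain of $\lambda^{-\lfloor\mathcal K\log N\rfloor/2}$ is exactly the numerology in the paper; and summing over the roughly $2^{j(N)}=N^{O(1)}$ cylinders $J\in\p_j$ at the low level $j\asymp\log N$ is how the paper produces $\hat C_2$ in the final step. But the gap you flag at the end is real and fatal to the proposal as written, and the route you suggest for closing it (condition~(III) plus transversality) is not the one that works. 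Condition~(III) controls the sum $\sum_\omega 1/\|x'_{\Nivel}\|_\infty$ up to $e^{N^\zeta}$, which is super-polynomial, and it cannot be upgraded to $\#\p_{\Nivel}\lesssim\lambda^{\Nivel}$: neither uniform expansion nor transversality gives a lower bound on the lengths of individual parameter cylinders, so the cardinality of $\p_{\Nivel}$ is genuinely out of control by this route. Likewise the alternative bookkeeping $|\omega|\lesssim|\tilde\omega_{-1}|\lambda^{-\lfloor\mathcal K\log N\rfloor}$ fails because the distortion carries a factor $|x_{\Nivel}(\tilde\omega_{-1})|^{-1}$ which is not bounded.

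The paper avoids ever counting $\p_{\Nivel}$ by a three-step argument you would need to supply. First, it transfers $E_{N,J}$ from parameter space to the phase space of the fixed map $f_{t_R}$ (where $t_R\in\partial J$) via the combinatorics-preserving map $\mathcal U_J$ of Schnellmann, which gives $|E_{N,J}|\le C'|\hat E_{N,t_R}|$ (Lemma~\ref{phase_par}). Second, since $\hat E_{N,t_R}\subset f_{t_R}^{-\Nivel}(\Omega)$ with $\Omega=\bigcup_{\eta'}f_{t_R}^{\Nivel}(\hat E_{N,t_R,\eta'})$, it uses the $f_{t_R}$-invariance of the a.c.i.p.\ $\mu_{t_R}$ together with the two-sided bound on $\rho_{t_R}$ to get $|\hat E_{N,t_R}|\lesssim|\Omega|$; this replaces any attempt to pull back lengths through $f_{t_R}^{\Nivel}$ (which is where derivative distortion would ruin the estimate). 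Third — and this is the step that kills the cardinality problem — Lemma~\ref{lemma_imagens} shows that the image $f_{t_R}^{\Nivel}(\hat E_{N,t_R,\eta'})$ depends only on the image $f_{t_R}^{\Nivel}(\eta')$, and because $t_R$ is an endpoint of a level-$j$ cylinder its postcritical orbit has at most $j\asymp\log N$ points, so there are at most $(j+1)^2=O((\log N)^2)$ distinct images; thus $|\Omega|$ is bounded by $(\log N)^2\cdot 2^{K+1}\cdot\lambda^{-\lfloor\mathcal K\log N\rfloor}$, with no appearance of $\#\p_{\Nivel}$ anywhere. The polylogarithmic factor $(\log N)^2$ is then absorbed by passing from $\mathcal K$ to any $\mathcal K'<\mathcal K$, which is exactly why the proposition is stated with this margin. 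Without this phase-space / invariance / finite-orbit mechanism your estimate does not close.
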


The proof of this proposition follows from 

\begin{lemma}\label{lema_E}  There exists $\hat{C}_1 > 0$, that does not depend on $\mathcal{K}$, such that for every $\mathcal{K}' < \mathcal{K}$ there exists $K=K(\mathcal{K}') >0$ such that
if  $J \in \p_j$, j =j(N),  and $E_{N,J}$ is as defined before, then
\begin{equation}
|E_{N,J}| \le KN^{-\hat{C}_1\mathcal{K}' }. \end{equation}
\end{lemma}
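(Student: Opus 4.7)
I would argue by a recursive measure estimate across the ``levels'' $k \in \{0, 1, \ldots, k_{\max}\}$, with $k_{\max} := \lfloor \mathcal{K}\log N/\tau \rfloor$, obtaining a uniform geometric decay per level. For each $k$, set $q(k) := \min\{(k+1)\tau, \lfloor \mathcal{K}\log N\rfloor\}$ and
$$
E^{(k)} := \bigcup \bigl\{\omega_k \in \p_{\Nivel + q(k)} : \omega_k \subset J,\ \text{statement (A) or (B) holds at every level } 0,1,\ldots,k\bigr\}.
$$
Note that $E_{N,J} = E^{(k_{\max})}$ and that $|E^{(k)}|$ is nonincreasing in $k$. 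Starting from $|J| \le C_3 \lambda^{-j(N)} \le 1/N$, iterating a one-step estimate $|E^{(k)}| \le \rho\,|E^{(k-1)}|$ with some $\rho \in (0,1)$ independent of $N,J,k$ would then give
$$
|E_{N,J}| \le \rho^{k_{\max}}\,|J| \le K\, N^{-1 - \hat{C}_1 \mathcal{K}},
$$
with $\hat{C}_1 := \log(1/\rho)/\tau$ independent of $\mathcal{K}$. The extra factor $N^{-1}$ provides slack to absorb the gap between $\mathcal{K}$ and any $\mathcal{K}' < \mathcal{K}$ into a constant $K = K(\mathcal{K}')$.

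The heart of the argument is the one-step estimate. Fix $\omega_{k-1} \in \p_{\Nivel + q(k-1)}$ contained in $E^{(k-1)}$ and partition it into sub-cylinders $\omega_k \in \p_{\Nivel + q(k)}$. A boundary of such an $\omega_k$ that is not already a boundary of $\omega_{k-1}$ is, by the very definition of the partitions, a parameter $t$ at which $f_t^{i+1}(c) = c$ for some iterate $i$ in the current window $[\Nivel + k\tau,\Nivel + (k+1)\tau)$. The failure of ``(A) or (B) at level $k$'' for $\omega_k = (a,b)$ is precisely that both $a$ and $b$ are such new boundaries. Passing from parameter intervals to the dynamical image $x_{\Nivel + k\tau}(\omega_{k-1})$ via Lemma \ref{compare} (Eq.~(\ref{const33})) and bounded distortion (Remark \ref{const2}), the combinatorial picture on $\omega_{k-1}$ becomes isomorphic to that of preimages of $c$ of order $\le \tau$ inside the image interval. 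Because $2^{1/\tau} \le \sqrt{\lambda}$, over $\tau$ extra iterations the image expands by a factor at least $\lambda^\tau \ge 4$; combined with the topological mixing of $f_t$ on $[f_t^2(c),f_t(c)]$ and the uniform lower bound on $|x_{\Nivel + q(k-1)}(\omega_{k-1})|$ inherited from level $k-1$, this forces the expanded image to straddle $c$ strictly in its interior. Pulling back, at least a fraction $1 - \rho > 0$ of $|\omega_{k-1}|$ is occupied by ``bad'' subcylinders both of whose boundaries are new, so the complementary set on which (A) or (B) persists has relative measure at most $\rho$.

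The main obstacle is obtaining the uniform $\rho < 1$ across all admissible $\omega_{k-1}$, $k$, $J$ and $N$. The delicate case is when $x_{\Nivel + k\tau}(\omega_{k-1})$ is a thin interval lying far from the post-critical orbit, so that new boundaries could concentrate near its endpoints rather than in its interior. To rule this out I would combine the uniform expansion, condition (III) of Definition \ref{goodf} (which controls the subexponentially many cylinders on which the derivative is atypically small), and the topological mixing of $f_t$ to show that after a bounded number of additional iterations the image must contain preimages of $c$ with definite separation from its endpoints. Once this uniform $\rho < 1$ is in hand, the recursion together with $|J| \le 1/N$ assemble mechanically into the stated polynomial decay.
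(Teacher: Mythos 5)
Your plan does not match the paper's proof, and more importantly the central step would fail. The paper never establishes a per-level \emph{measure} contraction. Instead it proves a per-level \emph{cardinality} bound (each cylinder $\hat\eta_k$ contributes at most two subcylinders $\hat\eta_{k+1}$ to $\mathcal{F}_{k+1}$, see Lemma \ref{lemma_card_ENJ}), transfers the set from parameter space to the phase space of $f_{t_R}$ via Schnellmann's map $\mathcal{U}_J$ and the comparison $|\omega|\le C'|\mathcal{U}_J(\omega)|$, pushes forward once by $f_{t_R}^{N-\lfloor\mathcal{K}\log N\rfloor}$ using the $\mu_{t_R}$-invariance and the bounded density $1/C_1'\le\rho_{t_R}\le C_1'$, exploits the periodicity of $c$ under $f_{t_R}$ to reduce to at most $(p+1)^2$ distinct image intervals (Lemma \ref{lemma_imagens}), and finally multiplies the count $2^{\lfloor\mathcal{K}\log N/\tau\rfloor+1}$ by the length bound $\lambda^{-\lfloor\mathcal{K}\log N\rfloor}$ for a pushed-forward level-$N$ cylinder. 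The inequality $2^{1/\tau}\le\sqrt{\lambda}$ then closes the computation.

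The gap in your plan is the claimed uniform contraction $|E^{(k)}|\le\rho\,|E^{(k-1)}|$ with $\rho<1$. There is no such $\rho$. If in the window $[\Nivel+k\tau,\Nivel+q(k))$ the orbit $t\mapsto x_i(t)$ produces \emph{no} new cuts inside some $\omega_{k-1}$, then $\omega_{k-1}$ coincides with its unique subcylinder at level $\Nivel+q(k)$, both conditions (A) and (B) hold vacuously, and no measure is lost: $\rho=1$ on that piece. Even when cuts do appear, only the (at most two) boundary subcylinders survive, but their combined length can equal that of the parent, e.g.\ when exactly one new cut occurs. Your proposed mechanism --- that the expansion by $\lambda^\tau\ge4$ combined with topological mixing forces the image $x_{\Nivel+k\tau}(\omega_{k-1})$ to straddle $c$ --- cannot work: that image has length at most $C\lambda^{-(\Nivel+k\tau)}$, which remains exponentially small after $\tau$ more iterations, and preimages of $c$ of order $\le\tau$ form a fixed finite set that is not dense at such scales. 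Likewise the ``uniform lower bound on $|x_{\Nivel+q(k-1)}(\omega_{k-1})|$ inherited from level $k-1$'' that you invoke does not exist; nothing in the definition of $E_{N,J}$ prevents those images from being arbitrarily small, and condition (III) controls a distortion sum, not the distribution of preimages of $c$. The reason the paper's argument survives this is precisely that it separates the combinatorics (at most $2^{k_{\max}}$ survivors per $\eta'$) from the metric estimate, and the metric estimate is done \emph{after} a single push-forward by $f^{\Nivel}$, where invariance of $\mu_{t_R}$ replaces the per-level bounded-distortion control that a direct parameter-space recursion would need and cannot have.
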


We will prove Lemma \ref{lema_E} later in this section. 

\begin{proof}[Proof of Proposition  \ref{pro_E}] We have 
$$E_{N}=\bigcup_{J \in \p_j}  E_{N,J}.$$
Since there are at most $2^j$ cylinders of level $j$, we have by Lemma \ref{lema_E} that there exist $\hat{C}_1 > 0$ and $K=K(\mathcal{K}')$ such that 
\begin{equation}\label{medida_EN}
|E_N| \le  2^{\left(\frac{\log(C_3N)}{\log \lambda}\right)} KN^{-\hat{C}_1\mathcal{K}' } = KC_3^{\frac{\log 2}{\log \lambda}} N^{\frac{\log 2}{\log \lambda}-\hat{C}_1\mathcal{K}' }.
\end{equation}
\end{proof}
Define
\begin{equation} \label{defOmega}
\Omega_{N_0} = \left\{ t \in [0,1] \colon \forall N \ge N_0 \, \exists \, \omega  \in \p_{\Nivel}  \backslash E_{\Nivel}\mbox{\, and \, } t \in \omega \right\}. 
\end{equation} 
Note that $\Omega_{N_0}\subset \Omega_{N_0+1}$.
\begin{cor}  
If $\hat{C}_2-\hat{C}_1\mathcal{K} < -1$ we have
\begin{equation} 
\lim_{N_0 \rightarrow \infty} |\Omega_{N_0}| =1.\end{equation}
\end{cor}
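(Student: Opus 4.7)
The plan is a direct Borel--Cantelli type argument based on Proposition \ref{pro_E}. First, observe from the definition (\ref{defOmega}) that $\Omega_{N_0}\subset \Omega_{N_0+1}$, so $\lim_{N_0\to\infty} |\Omega_{N_0}|$ exists, and it suffices to bound the complement. By negation of the defining condition,
$$
[0,1]\setminus \Omega_{N_0} \subseteq \bigcup_{N\ge N_0} \bigcup_{\omega \in E_{\Nivel}} \omega,
$$
so
$$
\bigl|[0,1]\setminus \Omega_{N_0}\bigr| \le \sum_{N\ge N_0} |E_{\Nivel}|.
$$

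Next I would invoke Proposition \ref{pro_E}. Since $\hat{C}_2-\hat{C}_1\mathcal{K}<-1$, by continuity in $\mathcal{K}'$ we may pick $\mathcal{K}'<\mathcal{K}$ strictly with $\hat{C}_2-\hat{C}_1\mathcal{K}'<-1$ as well; concretely, any $\mathcal{K}'\in\bigl((\hat{C}_2+1)/\hat{C}_1, \mathcal{K}\bigr)$ works. Set $\alpha := \hat{C}_1\mathcal{K}' - \hat{C}_2 > 1$. Proposition \ref{pro_E} then yields a constant $K=K(\mathcal{K}')$ such that $|E_N|\le K N^{-\alpha}$ for every $N\ge 1$.

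The one cosmetic point is that the bound is phrased for $|E_N|$ but we need $|E_{\Nivel}|$. Since $\Nivel = N-\lfloor \mathcal{K}\log N\rfloor\to\infty$ and moreover $\Nivel\ge N/2$ for $N$ sufficiently large (depending only on $\mathcal{K}$), we obtain
$$
|E_{\Nivel}| \le K (\Nivel)^{-\alpha} \le K\, 2^{\alpha}\, N^{-\alpha}
$$
for all $N$ larger than some $N_1=N_1(\mathcal{K})$. Combining the above displays,
$$
\bigl|[0,1]\setminus \Omega_{N_0}\bigr| \le K\,2^{\alpha}\sum_{N\ge \max(N_0,N_1)} N^{-\alpha},
$$
which is the tail of a convergent $p$-series (as $\alpha>1$), hence tends to $0$ as $N_0\to\infty$. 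Therefore $\lim_{N_0\to\infty} |\Omega_{N_0}| = 1$, completing the proof.

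There is no serious obstacle here since Proposition \ref{pro_E} already performs the combinatorial work; the only mild care required is the choice of $\mathcal{K}'$ strictly less than $\mathcal{K}$ that still satisfies $\hat{C}_2-\hat{C}_1\mathcal{K}'<-1$, and the asymptotic comparison $\Nivel\sim N$ used to translate the bound on $|E_N|$ into a bound on $|E_{\Nivel}|$.
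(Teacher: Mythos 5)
Your proof is correct and follows essentially the same route as the paper's: express the complement of $\Omega_{N_0}$ as a union of $E_{\Nivel}$ over $N\ge N_0$, apply Proposition \ref{pro_E} with a choice of $\mathcal{K}'<\mathcal{K}$ still satisfying $\hat{C}_2-\hat{C}_1\mathcal{K}'<-1$, and let the tail of the resulting convergent series go to zero. The only difference is cosmetic: you spell out the comparison $\Nivel\ge N/2$ for large $N$ to translate the bound on $|E_{\Nivel}|$ into one in terms of $N$, a step the paper leaves implicit.
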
 
\begin{proof} Notice that
$$\Omega_{N_0} = \bigcap_{N\geq N_0}  \bigcup_{ \omega \in \p_{\Nivel}\setminus  E_{\Nivel}   } \omega .  $$
If we  choose $\mathcal{K}' < \mathcal{K}$ such that $\hat{C}_2-\hat{C}_1\mathcal{K}' <  - 1$ we have 
$$|\Omega_{N_0}^c|= \big|\bigcup_{N\geq N_0}  \bigcup_{ \omega  \in  E_{\Nivel}   } \omega \big|\leq \sum_{N\geq N_0} K(\Nivel)^{\hat{C}_2-\hat{C}_1\mathcal{K}' }\stackrel{N_0\to \infty}\longrightarrow 0.$$
\end{proof}

From now on we choose and fix $\mathcal{K} > 0$ satisfying  $\hat{C}_2-\hat{C}_1\mathcal{K} < -1$.

\begin{cor}\label{grande} For every $\gamma > 0$ there exists $\delta >0$ such that 
$$\lim_{N_0\rightarrow \infty} m(\Lambda_{N_0}^\delta \cap \Omega_{N_0}) >  1 -\gamma.$$
\end{cor}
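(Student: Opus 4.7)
The plan is to combine the two previously established facts by elementary measure-theoretic bookkeeping. The preceding lemma supplies, for any given tolerance $\gamma'>0$, a $\delta>0$ such that $\lim_{N_0\to\infty} m(\Lambda_{N_0}^\delta)\geq 1-\gamma'$, while the preceding corollary (which used the choice $\hat{C}_2-\hat{C}_1\mathcal{K}<-1$) gives $\lim_{N_0\to\infty} m(\Omega_{N_0})=1$. Both $\Lambda_{N_0}^\delta$ and $\Omega_{N_0}$ are increasing in $N_0$, so the limits of their measures exist and equal the measures of their unions.

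Given $\gamma>0$, I would apply the lemma with $\gamma/2$ in place of $\gamma$ to obtain some $\delta=\delta(\gamma)>0$ with $\lim_{N_0\to\infty} m(\Lambda_{N_0}^\delta)\geq 1-\gamma/2$. Then, using the standard inequality $m(A\cap B)\geq m(A)+m(B)-1$ valid for subsets of the probability space $[0,1]$,
$$m(\Lambda_{N_0}^\delta\cap \Omega_{N_0})\geq m(\Lambda_{N_0}^\delta)+m(\Omega_{N_0})-1.$$
Passing to the limit on the right, using monotonicity of both families, yields
$$\lim_{N_0\to\infty} m(\Lambda_{N_0}^\delta\cap \Omega_{N_0})\geq (1-\gamma/2)+1-1 = 1-\gamma/2 > 1-\gamma,$$
which is the desired conclusion.

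There is essentially no obstacle here; the corollary is a pure combination of the two prior results, and the only care needed is to observe that the limits are genuine suprema because the sets are nested, so the union bound can be applied cleanly after taking limits. The real content of the section lies upstream, in controlling $|E_N|$ and in the continuity argument for $\delta_t$, both of which are already in place.
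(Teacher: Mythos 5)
Your proof is correct, and since the paper leaves this corollary without an explicit proof (it follows immediately from the preceding lemma and corollary once $\mathcal{K}$ is fixed so that $\hat{C}_2-\hat{C}_1\mathcal{K}<-1$), your bookkeeping via $m(A\cap B)\geq m(A)+m(B)-1$ together with the monotonicity of $\Lambda_{N_0}^\delta$ and $\Omega_{N_0}$ is precisely the intended argument.
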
 

\begin{definition} \label{defg} Given $\delta > 0$ and $h_0 > 0$, define
$$\Gamma_{h_0}^\delta$$
as the set of all parameters $t \in [0,1]$  such that for every $h$, $0< |h|\leq h_0$, there exists $k$ satisfying 
$$N(t,h)-2 \lfloor \epsilon \log N(t,h) \rfloor   \leq k \leq N(t,h)-\lfloor \epsilon \log N(t,h) \rfloor $$
such that if $t \in \hat{\omega }\in \p_{k}$ then  $|x_{k}(\hat{\omega })| > \delta$. 

Given $t \in \Gamma_{h_0}^\delta$ and $h \neq 0$, let $N_2(t,h)$ be the largest $k$ with this property.   \end{definition}

\begin{definition}Given  $h$ and $t \in [0,1]$, define 
\begin{equation}\label{defN1}
N_1(t,h):= N(t,h) - \lfloor \mathcal{K} \log N(t,h) \rfloor,
\end{equation}
and for $h_0 > 0$ define 
$$
\hat{N}_1(h_0):= \min_{t\in I, |h|\leq h_0} N_1(t,h).
$$
Since 
$$\lim_{N\rightarrow \infty}  \max_{t \in [0,1]} \frac{1}{|Df_t^{N}(f_t(c))|} =0,$$
we have
$$\lim_{h_0\rightarrow 0} \hat{N}_1(h_0) =+\infty.$$
\end{definition} 

\begin{lemma}\label{bc-3} For every $\gamma > 0$ there exists $\delta > 0$ such that 
$$\lim_{h_0\rightarrow 0} m(\Gamma_{ h_0}^\delta) > 1-\gamma.$$
\end{lemma}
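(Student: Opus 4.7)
The plan is to assemble the lemma from two main ingredients already in place: Corollary \ref{grande}, which produces a large-measure set $\Lambda_{N_0}^\delta \cap \Omega_{N_0}$ where we both have good non-returning-to-critical behavior up to finite horizon and a guaranteed cylinder $\omega \in \p_{\Nivel}\setminus E_{\Nivel}$ at every scale $N \geq N_0$, and Lemma \ref{compare}, which translates this good-cylinder property into a length/image estimate. Given $\gamma>0$, first apply Corollary \ref{grande} to pick $\delta>0$ and then $N_0$ so that $m(\Lambda_{N_0}^\delta\cap \Omega_{N_0})>1-\gamma$. The whole game is to show that for all sufficiently small $h_0$,
$$\Lambda_{N_0}^\delta\cap \Omega_{N_0}\subset \Gamma_{h_0}^\delta.$$

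For that inclusion, fix $t\in \Lambda_{N_0}^\delta\cap \Omega_{N_0}$ and $0<|h|\leq h_0$. Since $\lim_{h_0\to 0}\hat{N}_1(h_0)=\infty$, by decreasing $h_0$ we can assume $N:=N(t,h)\geq N_0$ uniformly in $t$ and $h$. Since $t\in\Omega_{N_0}$, there is $\omega\in\p_M\setminus E_M$ with $M:=\Nivel$ and $t\in\omega$. Lemma \ref{compare} then yields some $i$ with
$$M-\lfloor \mathcal{K}\log M\rfloor\leq i\leq M$$
and $\tilde\omega\in\p_i$ with $\omega\subset\tilde\omega$, such that \emph{provided} $C_L|\tilde\omega|<\delta_{\tilde\omega}$, one has $|x_i(\tilde\omega)|\geq \delta_{\tilde\omega}$. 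The hypothesis on $\tilde\omega$ is verified routinely: $|\tilde\omega|\leq C_3\lambda^{-i}$ and $i\geq N_0-2\lfloor\mathcal{K}\log N_0\rfloor$, so taking $N_0$ larger if necessary makes $C_L|\tilde\omega|<\delta$; on the other hand $t\in\Lambda_{N_0}^\delta$ gives $\delta_{\tilde\omega}>\delta$, so $|x_i(\tilde\omega)|\geq \delta_{\tilde\omega}>\delta$.

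It remains to check that such $i$ falls in the window required by Definition \ref{defg}. The upper bound is immediate since $i\leq M=N-\lfloor\mathcal{K}\log N\rfloor$. For the lower bound, the monotonicity estimate
$$\lfloor\mathcal{K}\log(N-\lfloor\mathcal{K}\log N\rfloor)\rfloor\leq \lfloor\mathcal{K}\log N\rfloor$$
gives
$$i\geq M-\lfloor\mathcal{K}\log M\rfloor\geq N-2\lfloor\mathcal{K}\log N\rfloor,$$
which is exactly what is needed. Setting $k=i$ shows $t\in\Gamma_{h_0}^\delta$. Finally, the family $\Gamma_{h_0}^\delta$ is monotone decreasing in $h_0$, so
$$\liminf_{h_0\to 0} m(\Gamma_{h_0}^\delta)\geq m(\Lambda_{N_0}^\delta\cap\Omega_{N_0})>1-\gamma,$$
completing the proof.

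The only real thinking is the comparison of the two different window sizes: the one produced by Lemma \ref{compare} ($M-\lfloor\mathcal{K}\log M\rfloor\leq i\leq M$) versus the one required by Definition \ref{defg} ($N-2\lfloor\mathcal{K}\log N\rfloor\leq k\leq N-\lfloor\mathcal{K}\log N\rfloor$). The arithmetic goes through cleanly because $M=N-\lfloor\mathcal{K}\log N\rfloor$ and $\log M\leq \log N$. Everything else is bookkeeping: monotonicity of $\Gamma_{h_0}^\delta$ in $h_0$, the length bound $|\tilde\omega|\leq C_3\lambda^{-i}$, and uniform growth of $N(t,h)$ as $|h|\to 0$.
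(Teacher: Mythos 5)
Your proof is correct and follows essentially the same route as the paper: invoke Corollary \ref{grande} to get the large-measure set $\Lambda_{N_0}^\delta\cap\Omega_{N_0}$, shrink $h_0$ until $\hat{N}_1(h_0)>N_0$, use membership in $\Omega_{N_0}$ to produce a cylinder $\omega\in\p_{\Nivel}\setminus E_{\Nivel}$ containing $t$, and apply Lemma \ref{compare} together with $t\in\Lambda_{N_0}^\delta$ to get the required $k$ in the window $[N-2\lfloor\mathcal{K}\log N\rfloor,\,N-\lfloor\mathcal{K}\log N\rfloor]$. The window arithmetic and the verification of the hypothesis $C_L|\tilde\omega|<\delta_{\tilde\omega}$ are carried out just as in the paper (which is a bit terser on the latter point), and the monotonicity of $\Gamma_{h_0}^\delta$ as $h_0\downarrow 0$ completes the argument exactly as intended.
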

\begin{proof} By Corollary \ref{grande} there exist $\delta > 0$ and $N_0$ such that 
$$m(\Lambda_{N_0}^\delta \cap \Omega_{N_0}) > 1-\gamma.$$
Choose $h_0$ such 
$$\hat{N}_1(h_0) > N_0.$$
Let  $|h|\leq h_0$.  Then 
$$N(t,h)-\lfloor \mathcal{K} \log N(t,h) \rfloor \geq N_0.$$
If $t \in\Lambda_{N_0}^\delta \cap \Omega_{N_0}$,   choosing $\tilde{\omega}$ such that  $t \in \tilde{\omega} \in \p_{N(t,h)-\lfloor \mathcal{K} \log N(t,h) \rfloor}$ then $$\tilde{\omega}  \not\in  E_{\nivelh }.$$
Hence, by Lemma \ref{compare} there exists $k$ satisfying (here $N=N(t,h)$)
$$ N-\lfloor \mathcal{K} \log N \rfloor - \lfloor \mathcal{K} \log (N-\lfloor \mathcal{K} \log N \rfloor) \rfloor \leq k \leq N-\lfloor \mathcal{K} \log N\rfloor$$
such that if $t \in \tilde{\omega}\subset \hat{\omega} \in \p_{k}$ then 
$$|x_{k}(\hat{\omega})|\geq \delta_{\hat{\omega}} > \delta$$
since $t \in \Lambda_{N_0}^\delta $, so that $C_L|\hat{\omega}|< \delta < \delta_{\tilde{\omega}}$.  Therefore, $\Gamma_{ h_0}^\delta \supset  \Lambda_{N_0}^\delta \cap \Omega_{N_0}$.
\end{proof}

\begin{definition}  Given $h_0 > 0$ and  $\delta  > 0 $, for every $h$ such that $|h|\leq h_0$ let $\mathcal{A}_{h,h_0}^\delta$ be a covering of $\Gamma^\delta_{h_0}$ by intervals $\omega$ with the following properties
\begin{itemize}
\item[P$_1$.] There exists $t \in \Gamma^\delta_{h_0}$ such that $t \in \omega \in \p_{N_2(t,h)}$.
\item[P$_2$.] If $t' \in \Gamma^\delta_{h_0}$ and $t'\in \omega$ then $\omega' \subset \omega$, where  $t'\in \omega' \in \p_{N_2(t',h)}$.
\item[P$_3$.]  There does not exist  $t'' \in \Gamma^\delta_{h_0}$ such that $t''\in \omega'' \in \p_{N_2(t'',h)}$ and $\omega \subsetneqq \omega''$.
\end{itemize}
One can check that one such collection   $\mathcal{A}_{h,h_0}^\delta$ does exist.  Indeed, consider  the covering of $\Gamma^\delta_{h_0}$ given by 
$$\{ \omega \colon  \text{ there exists } t \in \Gamma^\delta_{h_0} \text{ such that } t \in \omega \in \p_{N_2(t,h)} \}.$$
Of course this covering satisfies property P$_1$. Remove from this covering all intervals  $\omega$ that do not satisfy property P$_3$. Then the remaining  collection  is  a covering of $\Gamma^\delta_{h_0}$ satisfying properties P$_1$, P$_2$ and P$_3$. Note also that the distinct  intervals in $\mathcal{A}_{h,h_0}^\delta$ are pairwise disjoint. Indeed, if $\omega, \omega' \in \mathcal{A}_{h,h_0}^\delta$, with $\omega\neq \omega'$ and $\omega\cap\omega' \neq \emptyset$ then either $\omega \subsetneqq \omega'$ or $\omega' \subsetneqq \omega$, which is in contradiction with property P$_3$. 

We note that $|\mathcal{A}_{h,h_0}^\delta|\geq m(\Gamma^\delta_{h_0})$, since $\mathcal{A}_{h,h_0}^\delta$ covers $\Gamma^\delta_{h_0}$. Here $|\mathcal{A}_{h,h_0}^\delta|$ denotes the Lebesgue measure of the union of the intervals in the family $\mathcal{A}_{h,h_0}^\delta$. 
\end{definition}

\begin{lemma} \label{est_N} 
If $h_0$ is small enough there are $C_5 > 0$ and $C_6 > 0$ such that the following holds. Given $t' \in \Gamma^\delta_{h_0}$, let $\omega$ be the unique interval in $\mathcal{A}_{h,h_0}^\delta$ such that $t'\in \omega$. Let  $t \in \Gamma^\delta_{h_0}$ be such that $t \in \omega \in \p_{N_2(t,h)}$. Then 
\begin{equation}\label{l1}  
\lfloor\frac{\epsilon}{2}  \log N(t',h) \rfloor  \leq N(t',h)-N_2(t,h)\leq C_5 \mathcal{K} \log N(t',h)
\end{equation} 
and 
\begin{equation}\label{l2}  |\omega|\geq C_6 \delta N(t',h)^{\mathcal{K} \frac{\log \lambda}{2}}|h|.\end{equation} 
\end{lemma}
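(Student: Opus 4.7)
The plan is to establish \eqref{l1} and \eqref{l2} in sequence, leveraging that $\omega\in \p_{N_2(t,h)}$ contains both $t$ and $t'$ in order to invoke bounded distortion. Since $|\omega|\le C_3\lambda^{-N_2(t,h)}$, for $h_0$ small enough the hypothesis of Remark \ref{const2} is satisfied, so the orbits of $c$ under $f_t$ and $f_{t'}$ share combinatorics up to iterate $N_2(t,h)$, and one obtains
\[ |Df_t^{N_2(t,h)}(f_t(c))|\;\asymp\; |Df_{t'}^{N_2(t,h)}(f_{t'}(c))|,\]
with ratio bounded by the constant $C_1$.

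For \eqref{l1}, I would factor $Df_s^{N(s,h)} = Df_s^{N_2(t,h)}\cdot Df_s^{N(s,h)-N_2(t,h)}$ for both $s=t$ and $s=t'$, compare the resulting expressions using the pinching $|Df_s^{N(s,h)}(f_s(c))|\in[1/(\Lambda|h|),1/|h|)$ together with the uniform bounds $\lambda^k\le |Df_s^k|\le \Lambda^k$, and conclude that $N(t,h)-N_2(t,h)$ and $N(t',h)-N_2(t,h)$ are comparable up to the multiplicative ratio $\log\Lambda/\log\lambda$ and an additive $O(1)$. Combined with $\lfloor\mathcal{K}\log N(t,h)\rfloor\le N(t,h)-N_2(t,h)\le 2\lfloor\mathcal{K}\log N(t,h)\rfloor$ coming from the definition of $N_2$ in Definition \ref{defg}, this forces $\log N(t,h)$ and $\log N(t',h)$ to agree up to an additive constant, and the two-sided bound \eqref{l1} then follows for $h_0$ small, with the factor $\tfrac{1}{2}$ on the left absorbing the additive corrections.

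For \eqref{l2}, the very definition of $N_2(t,h)$ together with property P$_1$ of $\mathcal{A}^\delta_{h,h_0}$ forces $|x_{N_2(t,h)}(\omega)|>\delta$, so the second assertion of Lemma \ref{compare} gives
\[ |\omega|\;\ge\; \frac{\delta}{C\,|Df_t^{N_2(t,h)}(f_t(c))|}.\]
Using the decomposition $|Df_t^{N_2(t,h)}(f_t(c))|\le |Df_t^{N(t,h)}(f_t(c))|/\lambda^{N(t,h)-N_2(t,h)} < 1/\bigl(|h|\,\lambda^{N(t,h)-N_2(t,h)}\bigr)$ together with $N(t,h)-N_2(t,h)\ge \lfloor\mathcal{K}\log N(t,h)\rfloor$ yields $|\omega|\ge (\delta/C)\,|h|\,\lambda^{\lfloor\mathcal{K}\log N(t,h)\rfloor}$. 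Finally, for $h_0$ sufficiently small one has $\lfloor\mathcal{K}\log N(t,h)\rfloor\ge \tfrac{\mathcal{K}}{2}\log N(t',h)$ (by the log-comparability established in the previous step), converting the estimate into $|\omega|\ge C_6\,\delta\,N(t',h)^{\mathcal{K}\log\lambda/2}\,|h|$, as required.

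The main technical point is the comparison argument linking $N(t,h)$ and $N(t',h)$: once the critical orbits $f_\theta^k(c)$ are allowed to leave the cylinder combinatorics at iterate $N_2(t,h)$, only uniform hyperbolicity rather than bounded distortion applies, which is precisely what produces the $\log\Lambda/\log\lambda$ factor and ultimately forces the conservative exponent $\mathcal{K}\log\lambda/2$ instead of the heuristic $\mathcal{K}\log\lambda$. Everything else reduces to routine manipulations of the Lasota--Yorke-type expansion bounds.
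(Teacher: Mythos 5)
Your strategy has a genuine gap in the proof of the \emph{lower} bound in Eq.~\eqref{l1}. You propose to derive it from the chain: $N(t',h)-N_2(t,h)$ is comparable to $N(t,h)-N_2(t,h)$ up to a multiplicative factor $\log\Lambda/\log\lambda$ and an additive $O(1)$, then invoke $N(t,h)-N_2(t,h)\ge\lfloor\mathcal{K}\log N(t,h)\rfloor$ and the log-comparability of $N(t,h)$ with $N(t',h)$. But this route only yields
$$
N(t',h)-N_2(t,h)\;\gtrsim\;\frac{\log\lambda}{\log\Lambda}\,\mathcal{K}\log N(t',h)-O(1),
$$
and since $\log\lambda/\log\Lambda$ can be an arbitrarily small positive number, this does \emph{not} give $\lfloor\tfrac{\mathcal{K}}{2}\log N(t',h)\rfloor$. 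The factor $\tfrac12$ absorbs additive corrections, not a small multiplicative constant. The paper sidesteps this entirely: it first proves, by comparing the nested cylinders $\omega'\subset\omega$ and using the $\delta$-lower bound on $|x_{N_2(t',h)}(\omega')|$, that $N_2(t',h)\ge N_2(t,h)-\bar{C}$ for a \emph{uniform} constant $\bar{C}$ depending only on $\delta$; then it invokes the definitional inequality $N(t',h)-N_2(t',h)\ge\lfloor\mathcal{K}\log N(t',h)\rfloor$ applied \emph{at $t'$}, giving $N(t',h)-N_2(t,h)\ge\lfloor\mathcal{K}\log N(t',h)\rfloor-\bar{C}$ with no lossy hyperbolicity ratio. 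You never exploit that $t'\in\Gamma^\delta_{h_0}$ as well, and hence $N_2(t',h)$ is available to you.

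Your treatment of the upper bound in \eqref{l1} matches the paper's (the multiplicative $\log\Lambda/\log\lambda$ is harmless there because $C_5$ is unspecified), and your derivation of \eqref{l2} is a small variant of the paper's: you route through $t$ and $N_2(t,h)$ and then convert $N(t,h)$ to $N(t',h)$ via comparability, whereas the paper works directly with $t'$, $N_2(t',h)$, $\omega'$, and the definitional bound $N(t',h)-N_2(t',h)\ge\lfloor\mathcal{K}\log N(t',h)\rfloor$, avoiding the conversion step. Once the lower bound of \eqref{l1} is correctly proved (so that the comparability you invoke is justified), your version of \eqref{l2} would go through. The fix is to follow the paper's step: bound $|N_2(t,h)-N_2(t',h)|$ by a uniform constant, and read off the lower bound from the definition of $N_2(t',h)$, not from $N_2(t,h)$.
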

\begin{proof}Consider $\omega'$ such that 
$$t' \in \omega' \in \p_{N_2(t',h)}.$$
Then by property P$_2$ we have $\omega' \subset \omega$. Since
\begin{align*}
\delta &\le |x_{N_2(t',h)}(\omega')| = |\partial_{\theta}f^{N_2(t',h)}(c)||\omega'|\le C_1C_3|Df_{t'}^{N_2(t',h)}(f_{t'}(c))|,
\end{align*}
it follows that
\begin{equation}\label{compr_omega}
\frac{\delta}{C_1C_3}  \frac{1}{|Df_{t'}^{N_2(t',h)}(f_{t'}(c))|}  \leq |\omega'| \leq |\omega|\leq \frac{C_1C_3}{|Df_t^{N_2(t,h)}(f_t(c))|}.
\end{equation}
Since $t, t'\in \omega$, there is $C_1> 1$ such that 
$$  \frac{1}{C_1} \frac{1}{|Df_{t'}^{i}(f_{t'}(c))|} \leq    \frac{1}{|Df_t^{i}(f_t(c))|} \leq  C_1 \frac{1}{|Df_{t'}^{i}(f_{t'}(c))|}.$$
for every $i\leq N_2(t,h)$.  Choose $\bar{C}$ such that

\begin{equation}\label{esc2} 
\frac{\delta}{C_3^2 C_1^3}>  \frac{1}{\lambda^{\bar{C}}}.
\end{equation} 
Then 
$$N_2(t',h)\geq N_2(t,h) - \bar{C},$$
otherwise 
\begin{align} 
\frac{\delta}{C_1C_3}  \frac{1}{|Df_{t'}^{N_2(t',h)}(f_{t'}(c))|}  &\le \frac{C_1C_3}{|Df_t^{N_2(t,h)}(f_t(c))|} \nonumber \\
&\le \frac{C_1C_3}{|Df_t^{N_2(t,h)-N_2(t',h)}(f_t^{N_2(t',h)+1}(c))|} \frac{1}{|Df_t^{N_2(t',h)}(f_t(c))|} \nonumber \\
&\le \frac{C_1C_3}{\lambda^{ \bar{C}}} \frac{C_1}{|Df_{t'}^{N_2(t',h)}(f_{t'}(c))|}, \nonumber 
\end{align} 
which contradicts  Eq. (\ref{esc2}). In particular 
\begin{align*} 
N(t',h) - N_2(t,h)&\ge  N(t',h) - N_2(t',h)- \bar{C} \\
&\ge \lfloor \epsilon  \log N(t',h) \rfloor -\bar{C} \\
&\ge \lfloor\frac{\epsilon}{2}  \log N(t',h) \rfloor .
 \end{align*}
Note that the lower bound holds  if $h_0$ is small enough. Thus, $$N(t',h)  > N_2(t,h).$$ 
Moreover, 
\begin{align*}
   |h|&\le\frac{1}{|Df_{t'}^{N(t',h)}(f_{t'}(c))|} \\
   &\le \frac{1}{|Df_{t'}^{N(t',h)-N_2(t,h)}(f_{t'}^{N_2(t,h)+1}(c))|} \frac{1}{|Df_{t'}^{N_2(t,h)}(f_{t'}(c))|} \\
   &\le\frac{1}{|Df_{t'}^{N(t',h)-N_2(t,h)}(f_{t'}^{N_2(t,h)+1}(c))|} \frac{C_1}{|Df_{t}^{N_2(t,h)}(f_{t}(c))|}. 
\end{align*}
On the other hand, 
\begin{align}     
|h|&\ge  \frac{1}{|Df_{t}^{N(t,h)+1}(f_t(c))|} \nonumber \\ 
   &\ge \frac{1}{|Df_{t}^{N(t,h)-N_2(t,h)}(f_{t}^{N_2(t,h)+1}(c))|} \frac{1}{|Df_{t}^{N_2(t,h)}(f_{t}(c))|}\frac{1}{\Lambda}.
\end{align}
Then
\begin{align*} 
&\log |Df_{t'}^{N(t',h)-N_2(t,h)}(f_{t'}^{N_2(t,h)+1}(c))| -\log C_1\\
& \leq \log |Df_{t}^{N(t,h)-N_2(t,h)}(f_{t}^{N_2(t,h)+1}(c))| + \log \Lambda 
\end{align*}
and consequently 
$$N(t',h)-N_2(t,h) \leq \hat{C}_3 (N(t,h)-N_2(t,h)) + \hat{C}_4.$$
In a similar way, we can obtain  
$$N(t,h)-N_2(t,h) \leq \hat{C}_3 (N(t',h)-N_2(t,h)) + \hat{C}_4, $$
where 
$$\hat{C}_3= \frac{\log \Lambda}{ \log \lambda}$$
and 
$$\hat{C}_4= \frac{\log C_1}{ \log \lambda}.$$
\begin{align} 
N(t,h)&=N(t,h)-N_2(t,h) + N_2(t,h) \nonumber \\
&\le 2\epsilon \log N(t,h) +N(t',h) \nonumber \\
&\le \frac{N(t,h)}{N(t,h)-2\epsilon \log N(t,h)} N(t',h) \nonumber \\
&\le  2  N(t',h),\nonumber 
\end{align} 
provided that $h_0$ is small. Consequently 
\begin{align} 
N(t',h)-N_2(t,h) &\le\hat{C}_3 (N(t,h)-N_2(t,h)) + \hat{C}_4\nonumber \\
 &\le \hat{C}_3 2\lfloor \mathcal{K} \log N(t,h) \rfloor + \hat{C}_4  \nonumber \\
 &\le \hat{C}_3 2\mathcal{K} \log [2 N(t',h)]+ \hat{C}_4  \nonumber \\
 &\le C_5 \mathcal{K} \log N(t',h).
 \end{align} 
Here the last inequality holds if  $h_0$ is small enough.   Moreover, by Eq. (\ref{compr_omega})

\begin{align}  
|\omega|&\ge \frac{1}{C_1C_3} \frac{\delta}{|Df_{t'}^{N_2(t',h)}(f_{t'}(c))|}  = \frac{\delta }{C_1C_3} \frac{|Df_{t'}^{N(t',h)-N_2(t',h)}(f_{t'}^{N_2(t',h)+1}(c))|}{|Df_{t'}^{N(t',h)}(f_{t'}(c))|} \nonumber \\
&\ge  \frac{\delta }{C_1C_3} \frac{\lambda^{N(t',h)-N_2(t',h)}}{|Df_{t'}^{N(t',h)}(f_{t'}(c))|}\geq  \frac{\delta }{C_1C_3} \lambda^{\frac{\mathcal{K} \log N(t',h)}{2}  -1 }|h| = \frac{\delta }{C_1C_3 \lambda} N(t',h)^{\mathcal{K} \frac{\log \lambda}{2}}|h|. 
\end{align}
Hence, we obtain  Eq. (\ref{l2}).\end{proof}

Choose $\epsilon > 0$  such that 
$$  \frac{1}{\sqrt{\lambda}}<    1-\epsilon.$$

\begin{lemma}\label{lema_M}Given  $M> 0$,  define 
$$
B^\delta_{h,h_0,M} = \big\{t : t \in \omega \in \mathcal{A}_{h,h_0}^\delta \mbox{ and }  dist(t, \partial\omega) > \frac{M+1}{1-\epsilon} |h| \big\}.
$$
Let $h_i =(1-\epsilon)^i h_0. $ Given $h$ satisfying $0< | h |\leq   h_0$, let 
$$i(h)= max  \{i\in \mathbb{N}\colon |h|< (1-\epsilon)^{i -1}h_0    \}.$$
For every $h > 0$  define
$$\hat{\Gamma}^\delta_{h,h_0} =  \Gamma^\delta_{h_0} \cap \big( \bigcap_{i\geq  i(h)}   B^\delta_{h_i,h_0,M}\big).  $$
Then
\begin{itemize}
\item[A.] If $0 < \hat{h} < h$ then $\hat{\Gamma}^\delta_{h,h_0}  \subset \hat{\Gamma}^\delta_{\hat{h},h_0}$,
\item[B.] We have 
$$\lim_{h\rightarrow 0} m(\hat{\Gamma}^\delta_{h,h_0})= m(\Gamma^\delta_{h_0}).$$
\end{itemize}
\end{lemma}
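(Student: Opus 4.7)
Part (A) should be a direct unpacking of the definitions. If $0<\hat h<h$ then $i(\hat h)\ge i(h)$, so
$$\bigcap_{i\ge i(\hat h)} B^{\delta}_{h_i,h_0,M}\supset \bigcap_{i\ge i(h)} B^{\delta}_{h_i,h_0,M},$$
and intersecting both sides with $\Gamma^{\delta}_{h_0}$ gives $\hat\Gamma^{\delta}_{h,h_0}\subset\hat\Gamma^{\delta}_{\hat h,h_0}$. I would just state this in one sentence.

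For (B) the plan is to estimate, for each fixed $i$, how much of $\Gamma^{\delta}_{h_0}$ we lose when we intersect with $B^{\delta}_{h_i,h_0,M}$, and then sum these bounds in $i\ge i(h)$. Since $\mathcal A^{\delta}_{h_i,h_0}$ is a pairwise disjoint cover of $\Gamma^{\delta}_{h_0}$, a point $t\in\Gamma^{\delta}_{h_0}$ fails to lie in $B^{\delta}_{h_i,h_0,M}$ only if it lies within distance $\frac{M+1}{1-\epsilon}|h_i|$ of the boundary of its cylinder $\omega\in\mathcal A^{\delta}_{h_i,h_0}$. Hence
$$m\bigl(\Gamma^{\delta}_{h_0}\setminus B^{\delta}_{h_i,h_0,M}\bigr)\le \sum_{\omega\in\mathcal A^{\delta}_{h_i,h_0}}\min\Bigl\{|\omega|,\,\tfrac{2(M+1)}{1-\epsilon}|h_i|\Bigr\}.$$
Now the key input is Lemma \ref{est_N}: for every $\omega\in\mathcal A^{\delta}_{h_i,h_0}$ and every $t'\in\omega\cap\Gamma^{\delta}_{h_0}$,
$$|\omega|\ge C_6\,\delta\, N(t',h_i)^{\mathcal K\frac{\log\lambda}{2}}\,|h_i|.$$
Combined with the trivial uniform lower bound $N(t',h_i)\ge \log(1/|h_i|)/\log\Lambda$ coming from $|Df_{t'}|\le\Lambda$ and the defining inequality $|Df_{t'}^{N(t',h_i)}(f_{t'}(c))|>1/|h_i|$, this gives a ratio
$$\frac{2(M+1)|h_i|/(1-\epsilon)}{|\omega|}\le \frac{C}{\bigl(\log(1/|h_i|)\bigr)^{\mathcal K\log\lambda/2}}$$
uniformly in $\omega$, so after summing over the disjoint family $\mathcal A^{\delta}_{h_i,h_0}$,
$$m\bigl(\Gamma^{\delta}_{h_0}\setminus B^{\delta}_{h_i,h_0,M}\bigr)\le \frac{C\,m(\Gamma^{\delta}_{h_0})}{\bigl(\log(1/|h_i|)\bigr)^{\mathcal K\log\lambda/2}}.$$

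To conclude I would sum this over $i\ge i(h)$. Because $h_i=(1-\epsilon)^i h_0$, we have $\log(1/|h_i|)=\log(1/h_0)+i\,|\log(1-\epsilon)|$, which grows linearly in $i$. Recall that $\mathcal K$ was already fixed large enough to make $\hat C_2-\hat C_1\mathcal K<-1$; enlarging $\mathcal K$ further if necessary so that $\mathcal K\log\lambda/2>1$, the series $\sum_{i\ge i(h)}i^{-\mathcal K\log\lambda/2}$ converges and has tail tending to $0$ as $i(h)\to\infty$. Since $i(h)\to\infty$ when $h\to0$, this yields
$$m\Bigl(\Gamma^{\delta}_{h_0}\setminus\bigcap_{i\ge i(h)}B^{\delta}_{h_i,h_0,M}\Bigr)\xrightarrow[h\to 0]{}0,$$
proving (B).

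The main obstacle is packaging the estimate cleanly: one has to make sure the constant $C$ in the per-cylinder bound is independent of the cylinder $\omega$ and the index $i$, which requires invoking the \emph{uniform} expansion lower bound $N(t',h_i)\ge c\log(1/|h_i|)$ together with the transversality-based lower bound on $|\omega|$ from Lemma \ref{est_N}. Everything else (monotonicity in (A) and convergence of the sum in (B)) is essentially bookkeeping once the right summable exponent is identified.
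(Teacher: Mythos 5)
Your argument is correct and follows essentially the same route as the paper: Part (A) is the same one-line monotonicity observation, and for Part (B) you use exactly the paper's two key inputs (the lower bound on $|\omega|$ from Lemma \ref{est_N} and the uniform linear-in-$i$ lower bound on $N(t,h_i)$ coming from expansion) to get a per-level bound of order $(K_1+iK_2)^{-\mathcal{K}\log\lambda/2}$, which is summable once $\mathcal{K}\log\lambda>2$. The only cosmetic slip is the factor $m(\Gamma^\delta_{h_0})$ in your per-level estimate — summing $|\omega|$ over the pairwise disjoint cover gives at most $m([0,1])=1$, not $m(\Gamma^\delta_{h_0})$ — but this has no effect on the conclusion.
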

\begin{proof}  Note that
$$\min_{t \in [0,1]} N(t,h_i) \ge  -\frac{\log h_0}{\log \Lambda}-1-\frac{ i\log (1-\epsilon)}{\log \Lambda},$$
where
$$
-\frac{\log h_0}{\log \Lambda}>0 \mbox{\;\; and \;\;} -\frac{ i\log (1-\epsilon)}{\log \Lambda}>0.
$$
Therefore, if $h_0$ is samall enough, there are $K_1,K_2>0$, such that
$$
\min_{t \in [0,1]} N(t,h_i) \ge K_1 + iK_2.
$$
Define
$$A_h= \bigcup_{\omega \in \mathcal{A}_{h,h_0}^\delta }\omega.$$
If $\omega \in \mathcal{A}_{h,h_0}^\delta$ then there is $t \in \Gamma^\delta_{h_0}$ such that $t \in  \omega \in \p_{N_2(t,h)}$. By Lemma \ref{est_N} 
\begin{align}
m(\omega\cap (B^\delta_{h,h_0,M})^c)&= m \{ t' \in \omega \colon  dist(t', \partial\omega) \leq  \frac{M+1}{1-\epsilon}|h|\} \nonumber \\ 
&\le  2\frac{M+1}{1-\epsilon}|h| \nonumber \\
&\le\frac{2(M+1)|h|}{(1-\epsilon)|\omega|}  |\omega| \nonumber \\
&\le   \frac{2C_6(M+1)}{\delta (1-\epsilon) N(t,h)^{\mathcal{K} \frac{\log \lambda}{2}} }  |\omega|.   
\end{align} 
Choose $\mathcal{K} $ large enough such that  $\mathcal{K} \log \lambda > 2$. Then
\begin{align}\label{sum1}
\sum_{i=0}^{\infty}  m(A_{h_i}\cap (B^\delta_{h_i,h_0,M})^c)  &\le   \sum_{i=0}^{\infty} \frac{2C_6(M+1)\sqrt{\lambda}}{\delta (K_1 +i K_2)^{\mathcal{K} \frac{\log \lambda}{2}}  }  <  \infty.
\end{align}
In particular 
\begin{align} \label{sum2} 
m\Big(\Gamma^\delta_{h_0} \cap \big( \bigcap_{i\geq  i(h)}   B^\delta_{h_i,h_0,M}\big)\Big) &= m(\Gamma^\delta_{h_0}) - m(\Gamma^\delta_{h_0}  \cap \big( \bigcap_{i\geq  i(h)}   B^\delta_{h_i,h_0,M}\big)^c)  \nonumber \\
&\ge m(\Gamma^\delta_{h_0})-  \sum_{i\geq  i(h)} m(\Gamma^\delta_{h_0}  \cap(B^\delta_{h_i,h_0,M})^c)  \nonumber \\
&\ge m(\Gamma^\delta_{h_0})-  \sum_{i\geq  i(h)} m(A_{h_i}  \cap(B^\delta_{h_i,h_0,M})^c).
\end{align} 
Eq. (\ref{sum1}) implies that 
$$\lim_{h\rightarrow 0} \sum_{i\geq  i(h)} m(A_{h_i}  \cap(B^\delta_{h_i,h_0,M})^c) = 0.$$
\end{proof}

\begin{proof}[Proof of Proposition \ref{main_prop}] 
By Lemma \ref{bc-3} for every $\gamma > 0$ there exists $\delta > 0$ such that for every small $h_0$ we have
$$m(\Gamma^\delta_{h_0}) > 1-\gamma.$$
Choose $M$ satisfying Eq. (\ref{def_M}). Define 
$$
\Gamma^\delta_{h,h_0}=\hat{\Gamma}^\delta_{h,h_0} \backslash Q,
$$
where $\hat{\Gamma}^\delta_{h,h_0} $ is the set defined in Lemma \ref{lema_M} and $Q$ is the countable set of parameters where $f_t$ has a periodic critical point.  By Lemma \ref{lema_M} Property A. holds. 
Let $t' \in \Gamma^\delta_{h,h_0}$, with $|h| < h_0$. There exists $i\geq i(h)$ such that 
$$ h_{i+1} \leq  | h| \leq h_{i},$$
where $h_i=(1-\epsilon)^i h_0$.  Thus, $N(t',h)=N(t',h_j)$, for some  $j \in \{i,i+1\}$, and consequently  $N_2(t',h)=N_2(t',h_j)$. Then there exists a unique  $\omega \in \mathcal{A}^\delta_{h_j,h_0}$ and $t \in \Gamma^\delta_{h_0}$ such that $t,t' \in \omega \in\p_{N_2(t,h)}$. Moreover, since $t' \in B^\delta_{h_j,h_0,M}$ we have 
$$dist(t',\partial \omega)\geq \frac{M+1}{1-\epsilon} h_j \geq (M+1)|h|.$$

Define $N_3(t',h)=N_2(t,h)$. By Lemma \ref{est_N} we have Eq. (\ref{est_N3}) holds.  By Lemma \ref{lemmaB}, Eq. (\ref{semc1}) holds. 

\end{proof}

\subsection{Proof of Lemma \ref{lema_E}}

Let $J$ be the interval as in the statement of Lemma \ref{lema_E}. The sets $E_{N, J}$ `live' in the parameter space. To estimate its measures we will compare them, following \cite{sch1}, with the measures of similarly defined sets in the phase space of the map $f_{t_R}$.

\begin{definition}[The sets $\hat{E}_{N,t_R}$] Let $J=[t_L,t_R]$. Denote by  $\hat{E}_{N,t_R}$ the  set of all $$\eta \in \p_N(t_R)$$ such that for all $k$ satisfying
$$0 \le k \le \left\lfloor\frac{\mathcal{K} \log{N}}{\tau} \right\rfloor$$
there is not  $$\tilde{\eta} \in \p_{\Nivel+j}(t_R), \;\; \eta \subset \tilde{\eta},$$
where 
$$j = \min \{   (k+1) \tau, \lfloor\mathcal{K} \log{N} \rfloor \},$$
such that   $$f_{t_R}^{\Nivel + k\tau}(\tilde{\eta}) \in \p_{j-k\tau}(t_R).$$
\end{definition}

Using a strategy similar to the one applied in  \cite{sch1},  we  estimate the measure  $|E_{N,J}|$ in terms of the measure  $|\hat{E}_{N,t_R}|$. To this end we need to define the map $\mathcal{U}_J$.
Recall that if $\mathcal{F}$ is a family of disjoint intervals then $|\mathcal{F}|$ denotes the sum of the measures of the intervals.
\begin{definition}[The map $\mathcal{U}_J$] 
Let $J= (t_L,t_R)$. Consider  the map $\mathcal{U}_J$ 
$$\mathcal{U}_J: \p_N|_J \to \p_N(t_R)
$$defined by Schnellmann  \cite[proof of Lemma 3.2]{sch1} in the following way.
Let $\omega \in \p_N|_J$ and choose $t \in \omega$. Since $\omega$ is a cylinder, it follows that $x_j(t) \neq c$ for all $0\le j < N$.
Therefore, there is a cylinder $\omega(x_0(t))$ in the partition $\p_N(t)$ such that $x_0(t)\in \omega(x_0(t))$.

Let $$\mathcal{U}_J(\omega) = \mathcal{U}_{t, t_R, N}(\omega(x_0(t))),$$ where $\mathcal{U}_{t, t_R, N}: \p_N(t) \to \p_N(t_R)$ is such that for all $\eta \in \p_N(t)$, the elements $\eta$ and $\mathcal{U}_J(\eta)$ have the same combinatorics.
$$
\mbox{symb}_{t}(f_{t}^i(\eta)) = \mbox{symb}_{t_R}(f_{t_R}^i(\mathcal{U}_{t, t_R, N}(\eta)),
$$
for $0\le i < N$. Schnellmann  \cite{sch1} proved  that $\mathcal{U}_{t, t_R, N}$ is well defined when $f_t$ is a family of piecewise expanding unimodal maps satisfying our assumptions. In particular, if $t < t'$ and a certain symbolic dynamic appears in the dynamics of $f_t$, then it also appears in the dynamics of $f_t'$.

Therefore, the cylinder $\omega'=\mathcal{U}_J(\omega) =\mathcal{U}_{t, t_R, N}(\omega(x_0(t))) $ has the same combinatorics as $\omega$, that is,
$$
\mbox{symb}(x_{j}(\omega)) = \mbox{symb}_{t_R}(f_{t_R}^j(\omega')),
$$
when $0\le j < N$. Since there are not two cylinders in $\p_N(t_R)$ with the same combinatorics, the element $\omega'$ does not depend on the choice of $t \in \omega$. Therefore, $\mathcal{U}_J$ is well defined.
\end{definition} 

\begin{lemma}\label{phase_par} 
If $\omega \in E_{N,J}$, then $\mathcal{U}_J(\omega) \in \hat{E}_{N,t_R}$. Moreover, there exists $C'\ge 1$ such that 
\begin{equation}
\label{sch_est} |\omega| \le C'|\mathcal{U}_{J}(\omega)|.
\end{equation} 
In particular
\begin{equation}\label{medidaENJ}
|E_{N,J}| \le C'|\hat E_{N,t_R}|.
\end{equation}
\end{lemma}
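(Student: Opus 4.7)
The lemma makes three assertions: (i) the combinatorial inclusion $\omega \in E_{N,J} \Rightarrow \mathcal{U}_J(\omega) \in \hat{E}_{N,t_R}$; (ii) the metric comparison $|\omega| \le C'|\mathcal{U}_J(\omega)|$; and (iii) the resulting measure bound. My plan is to handle (i) by contrapositive, and for (ii) to invoke Schnellmann's argument directly, since the spaces and maps are already set up as in \cite{sch1}.

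For (i), I would suppose that $\eta := \mathcal{U}_J(\omega)$ fails to lie in $\hat{E}_{N,t_R}$, so that for some $k \in \{0,\dots,\lfloor\mathcal{K}\log N/\tau\rfloor\}$ the unique $\tilde\eta \in \p_{\Nivel+j}(t_R)$ containing $\eta$ satisfies $f_{t_R}^{\Nivel+k\tau}(\tilde\eta)\in \p_{j-k\tau}(t_R)$. The maximality built into this last condition forces both boundary points of $\tilde\eta$ to be preimages of $c$ under $f_{t_R}^i$ for some index $i$ in the window $[\Nivel+k\tau,\Nivel+j)$. Because $\mathcal{U}_J$ preserves the symbolic dynamics over the first $N$ iterations, it also matches the unique parameter-space cylinder $\tilde\omega \in \p_{\Nivel+q}|J$ containing $\omega$ (with $q=j$) to $\tilde\eta$. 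This combinatorial matching transports critical-encounter times to the boundaries, forcing both endpoints $a,b \in \partial\tilde\omega$ to satisfy $x_i(\cdot)=c$ for some $i$ in the same window $[\Nivel+k\tau, \Nivel+q)$. This rules out both alternatives (A) and (B) at this $k$, contradicting $\omega \in E_{N,J}$.

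For (ii), the estimate $|\omega| \le C'|\mathcal{U}_J(\omega)|$ is essentially \cite[Lemma 3.2]{sch1}: by the mean value theorem, $|\omega|$ is comparable to $|x_N(\omega)|/|\partial_t x_N(t_*)|$ for some $t_* \in \omega$, while $|\mathcal{U}_J(\omega)|$ is comparable to $|f_{t_R}^N(\mathcal{U}_J(\omega))|/|Df_{t_R}^N(x_*)|$ for some $x_* \in \mathcal{U}_J(\omega)$. The transversality estimates \eqref{const_t} and \eqref{const_t2} yield $|\partial_t x_N(t_*)| \asymp |Df_{t_*}^{N-1}(f_{t_*}(c))|$, bounded distortion (Remark \ref{const2}) gives $|Df_{t_*}^{N-1}(f_{t_*}(c))| \asymp |Df_{t_R}^N(x_*)|$, and the matching combinatorics ensure $|x_N(\omega)| \asymp |f_{t_R}^N(\mathcal{U}_J(\omega))|$. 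Combining these yields the claim.

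For (iii), I would observe that $\mathcal{U}_J$ is injective on $\p_N|J$ since distinct cylinders have distinct combinatorial addresses, so the images $\{\mathcal{U}_J(\omega):\omega\in E_{N,J}\}$ are pairwise disjoint and, by (i), contained in $\hat{E}_{N,t_R}$; summing (ii) then delivers $|E_{N,J}| \le C'|\hat{E}_{N,t_R}|$. The most delicate step in the plan is the boundary-correspondence argument in (i): one must verify that the symbolic identification between $\tilde\omega$ and $\tilde\eta$ truly transports critical-encounter times from phase-space endpoints to parameter-space endpoints with the \emph{same} indices $i$, which rests on unwinding Schnellmann's construction of $\mathcal{U}_{t,t_R,N}$ together with transversality, so that parameter endpoints where $x_i=c$ correspond to phase-space preimages of $c$ at the same index.
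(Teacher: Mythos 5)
Your proposal is correct and follows essentially the same route as the paper: the combinatorial inclusion comes from the fact that $\mathcal{U}_J$ preserves the symbolic itinerary over the first $N$ iterates (you present this as a contrapositive where the paper states it directly), the metric bound is Schnellmann's \cite[Lemma 3.2]{sch1} (which you sketch rather than simply cite), and the measure bound follows by injectivity of $\mathcal{U}_J$ and summing. The boundary-to-critical-encounter transport you flag as delicate is exactly the content the paper compresses into the phrase ``have the same combinatorics,'' so your plan fills in the same argument at a slightly greater level of detail without changing its substance.
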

\begin{proof} Note  that $\mathcal{U}_J(\omega) \in \hat{E}_{N,t_R}$ follows from the fact  that $\omega$ and $\mathcal{U}_J(\omega)$ have the same combinatorics \cite{sch1}.  By \cite[Lemma 3.2]{sch1}, there exists a constant $C'\ge 1$ such that 
$$
|\omega| \le C'|\mathcal{U}_{J}(\omega)|.
$$
Thus,
\begin{equation}
|E_{N,J}| \le \sum_{\omega \in E_{N,J}}|\omega| \le \sum_{\omega \in E_{N,J}} C'|\mathcal{U}_{J}(\omega)| \le C'|\hat{E}_{N,t_R}|.
\end{equation}
\end{proof}

\begin{definition} For each $\eta' \in \p_{\Nivel}(t_R)$,  define the set
\begin{align*}
\hat{E}_{N,t_R, \eta'} = \Bigg\{\eta \in \p_N(t_R)&:  \eta \in  \hat{E}_{N,t_R} \ and \  \eta \subset \eta'  \Bigg\}.
\end{align*}
\end{definition}

\begin{lemma}\label{lemma_card_ENJ}
Let $\eta' \in \p_{\Nivel}(t_R)$.  Then
\begin{equation} \label{numero2}
\# \hat{E}_{N,t_{R},\eta'}  \leq 2^{ \left\lfloor\frac{\mathcal{K} \log{N}}{\tau} \right\rfloor +1}.
\end{equation}
\end{lemma}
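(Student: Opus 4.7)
The plan is to exploit the nested chain of cylinders canonically associated with each $\eta \in \hat{E}_{N,t_R,\eta'}$ and to argue that at every refinement step there are at most two admissible choices.

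More concretely, I would set $m := \lfloor \mathcal{K}\log N/\tau\rfloor$ and, for each $\eta$ and each $k = 0, 1, \ldots, m$, let $\tilde{\eta}_k$ be the unique element of $\p_{\Nivel + j_k}(t_R)$ containing $\eta$, where $j_k := \min\{(k+1)\tau, \lfloor \mathcal{K}\log N\rfloor\}$. These form a nested sequence $\tilde{\eta}_0 \supset \tilde{\eta}_1 \supset \cdots \supset \tilde{\eta}_m$, and since $j_m = \lfloor\mathcal{K}\log N\rfloor$ by definition of $m$, the final cylinder $\tilde{\eta}_m$ actually lives in $\p_N(t_R)$, so $\eta = \tilde{\eta}_m$. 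In particular, the chain determines $\eta$, and it suffices to count admissible chains starting from $\tilde{\eta}_{-1} := \eta'$ with $f_{t_R}^{\Nivel + k\tau}(\tilde{\eta}_k) \notin \p_{j_k - k\tau}(t_R)$ for each $k = 0, 1, \ldots, m$.

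The heart of the argument is the following claim: given $\tilde{\eta}_{k-1}$, there are at most two admissible choices of $\tilde{\eta}_k$. To prove this, I would observe that $f_{t_R}^{\Nivel + k\tau}$ restricts to a smooth homeomorphism on $\tilde{\eta}_{k-1}$, so its image $A_k := f_{t_R}^{\Nivel + k\tau}(\tilde{\eta}_{k-1})$ is an open interval. The sub-cylinders $\tilde{\eta}_k \in \p_{\Nivel + j_k}(t_R)$ of $\tilde{\eta}_{k-1}$ correspond bijectively to those $\omega \in \p_{j_k - k\tau}(t_R)$ that meet $A_k$, via $f_{t_R}^{\Nivel + k\tau}(\tilde{\eta}_k) = A_k \cap \omega$. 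The admissibility condition then translates into $A_k \cap \omega \subsetneq \omega$, i.e., $\omega$ is not contained in $A_k$. Since $A_k$ is an interval, at most two elements of $\p_{j_k - k\tau}(t_R)$ can meet $A_k$ without being contained in it, namely those straddling the two endpoints of $A_k$. Multiplying these bounds across the $m+1$ steps $k = 0, 1, \ldots, m$ yields $\# \hat{E}_{N,t_R,\eta'} \le 2^{m+1} = 2^{\lfloor \mathcal{K}\log N/\tau\rfloor + 1}$, which is the bound to prove.

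The main subtlety I expect to handle carefully is the bijective correspondence between sub-cylinders of $\tilde{\eta}_{k-1}$ at the finer partition level and elements of $\p_{j_k - k\tau}(t_R)$ meeting $A_k$: this uses both the definition of cylinders as maximal intervals of smoothness for $f_{t_R}^{\Nivel + j_k}$ and the fact that $f_{t_R}^{\Nivel + k\tau}$ restricts to a homeomorphism on $\tilde{\eta}_{k-1}$. One should also double-check the boundary bookkeeping at $k = m$, where $j_m - m\tau$ may be strictly smaller than $\tau$ (and could be zero in the divisibility case, in which case the step $k = m$ contributes no new refinement but the bound $2^{m+1}$ still holds trivially).
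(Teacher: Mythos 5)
Your proof is correct and takes essentially the same approach as the paper: both arguments track the nested chain of cylinders over the $\lfloor \mathcal{K}\log N/\tau\rfloor+1$ refinement steps of size $\tau$ and show that, at each step, at most two children of a given cylinder can remain admissible because any child whose closure lies in the interior of its parent has both of its endpoints hitting $c$ between iterates $\Nivel+k\tau$ and $\Nivel+j_k$, which forces it to project onto an element of $\p_{j_k-k\tau}(t_R)$ and hence be excluded by the defining condition of $\hat{E}_{N,t_R}$. The only cosmetic difference is that you push forward by $f_{t_R}^{\Nivel+k\tau}$ and count elements of $\p_{j_k-k\tau}(t_R)$ straddling an endpoint of the image interval $A_k$, whereas the paper argues directly with boundary points of the sub-cylinders before applying the dynamics; these are equivalent under the diffeomorphism $f_{t_R}^{\Nivel+k\tau}|_{\tilde{\eta}_{k-1}}$.
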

\begin{proof}  Define
 $$k_0= \left \lfloor\frac{\mathcal{K} \log{N}}{\tau} \right\rfloor.$$
Notice that
 $$N\geq  \Nivel +  k_0  \tau > N -\tau.$$
 If $N=  \Nivel +  k_0  \tau$ define $k_1=k_0$. Otherwise define $k_1=k_0+1$.
 For every $k$ satisfying 
$$0\leq k \leq k_1,$$
define  families of intervals $\mathcal{F}_k$ in the following way. If $k\leq k_0$ define
\begin{equation} \mathcal{F}_k = \{ \hat{\eta} \subset \eta'\colon \ \hat{\eta} \in  \p_{\Nivel + k \tau}(t_R) \text { and there is }    \eta \in \hat{E}_{N,t_{R},\eta'} \text {with } \eta \subset \hat{\eta} \}\end{equation} 
otherwise $k=k_1=k_0+1$ and 
\begin{equation} 
\mathcal{F}_{k_1} =  \hat{E}_{N,t_{R},\eta'}.
\end{equation} 
Note that if $k_1=k_0$ then we also have $\mathcal{F}_{k_1} =  \hat{E}_{N,t_{R},\eta'}$.
We claim that 
\begin{equation}\label{numero} 
 \# \mathcal{F}_k  \leq 2^{k}.
\end{equation} 
We observe that, taking $k=k_1$ in Eq. (\ref{numero}) we obtain Eq. (\ref{numero2}).  Note that either $\mathcal{F}_0$ is the empty set or $\mathcal{F}_0=\{ \eta' \}$.  Then $\#\mathcal{F}_0\leq 1$. Moreover, it is easy to see  that if $\hat{\eta}_{k+1} \in \mathcal{F}_k$, with $k < k_1$, then there exists a unique $\hat{\eta}_{k} \in \mathcal{F}_{k}$ such that $\hat{\eta}_{k+1} \subset \hat{\eta}_{k}$. Therefore, it is  enough to show that  for each $\hat{\eta}_{k} \in \mathcal{F}_{k}$, with $k < k_1$, there are at most two intervals $\hat{\eta}_{k+1} \in \mathcal{F}_{k+1}$ such that $\hat{\eta}_{k+1} \subset \hat{\eta}_{k}$. Indeed, given $k < k_1$, for every $\hat{\eta}_{k} \in \mathcal{F}_{k}$ we have  $\hat{\eta}_{k} \in \p_{\Nivel+k\tau}(t_R)$. Moreover, there is $j$ such that  for every  $\hat{\eta}_{k+1} \in \mathcal{F}_{k+1}$ we have  $\hat{\eta}_{k+1} \in \p_{\Nivel+j}(t_R)$, with $k\tau < j \leq \lfloor \mathcal{K} \log N \rfloor$, and $j \leq k\tau+\tau$.  Note that if the closure of $\hat{\eta}_{k+1}=(a,b)$ is contained in the interior of $\hat{\eta}_{k}$, then for every  $x \in \overline{\hat{\eta}_{k+1}}$ we have $f_{t_R}^p(x)\neq c$, for every $p < \Nivel+k\tau$. Furthermore, there are $n_a, n_b$ such that 
$$ f_{t_R}^{n_a}(a)=c=f_{t_R}^{n_b}(b),$$
where 
$$\Nivel+k\tau \leq  n_a, \ n_b < \Nivel+j.$$
We conclude that 
$$f_{t_R}^{\Nivel+k\tau}(\hat{\eta}_{k+1}) \in \p_{j-k\tau}(t_R).$$
where $j - k\tau  \leq \tau$. Therefore, if $\eta \subset \hat{\eta}_{k+1}$, with $\eta \in \p_{N}(t_R)$, then $\eta \not\in \hat{E}_{N,t_{R},\eta'}$ and consequently $\hat{\eta}_{k+1} \not\in \mathcal{F}_{k+1}$. Since there are at most two intervals $ \p_{\Nivel+j}(t_R)$ whose closure is not contained in the interior of $\hat{\eta}_{k}$, we conclude that there are at most two intervals in $\mathcal{F}_{k+1}$ that are contained in $\hat{\eta}_{k}$.

\end{proof}

\begin{lemma}\label{lemma_imagens}
Let $\eta', \eta'' \in \p_{\Nivel}(t_R)$ such that 
$$
f_{t_R}^{\Nivel}(\eta')=f_{t_R}^{\Nivel}(\eta'').
$$
Then
$$
f_{t_R}^{\Nivel}(\hat{E}_{N,{t_R},\eta'})=f_{t_R}^{\Nivel}(\hat{E}_{N,{t_R},\eta''}).
$$
\end{lemma}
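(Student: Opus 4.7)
The plan is to reduce the membership condition ``$\eta\in\hat E_{N,t_R,\eta'}$'' to a condition that depends only on the image $f_{t_R}^{\Nivel}(\eta')$ and on the image $\zeta:=f_{t_R}^{\Nivel}(\eta)$, and not on the particular lift $\eta'$. Once this is done, the conclusion is automatic: since $f_{t_R}^{\Nivel}(\eta')=f_{t_R}^{\Nivel}(\eta'')$, any $\zeta$ that arises as the image of an element of $\hat E_{N,t_R,\eta'}$ also arises as the image of an element of $\hat E_{N,t_R,\eta''}$, and conversely.

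First, I would record the elementary bijection. Because $\eta'\in\p_{\Nivel}(t_R)$, the map $f_{t_R}^{\Nivel}$ is a smooth homeomorphism of $\eta'$ onto $f_{t_R}^{\Nivel}(\eta')$, and the same is true for $\eta''$. Under $f_{t_R}^{\Nivel}$, the cylinders $\eta\in\p_N(t_R)$ contained in $\eta'$ correspond bijectively to cylinders $\zeta\in\p_{\lfloor\mathcal{K}\log N\rfloor}(t_R)$ contained in $f_{t_R}^{\Nivel}(\eta')$: indeed, $\eta\subset\eta'$ is a maximal open interval of smooth monotonicity for $f_{t_R}^N$ iff $f_{t_R}^{\Nivel}(\eta)$ is a maximal open interval of smooth monotonicity for $f_{t_R}^{\lfloor\mathcal{K}\log N\rfloor}$ inside $f_{t_R}^{\Nivel}(\eta')$. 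An analogous statement holds at each intermediate level $\Nivel+j$, with $0\le j\le\lfloor\mathcal{K}\log N\rfloor$: cylinders $\tilde\eta\in\p_{\Nivel+j}(t_R)$ with $\eta\subset\tilde\eta\subset\eta'$ correspond bijectively to cylinders $f_{t_R}^{\Nivel}(\tilde\eta)\in\p_j(t_R)$ with $\zeta\subset f_{t_R}^{\Nivel}(\tilde\eta)\subset f_{t_R}^{\Nivel}(\eta')$.

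Second, I would rewrite the defining condition of $\hat E_{N,t_R}$ on $\eta\subset\eta'$ using this correspondence. The condition involves, for each admissible $k$, the existence of some $\tilde\eta\in\p_{\Nivel+j}(t_R)$ (with $j=\min\{(k+1)\tau,\lfloor\mathcal{K}\log N\rfloor\}$) containing $\eta$ such that
$$f_{t_R}^{\Nivel+k\tau}(\tilde\eta)=f_{t_R}^{k\tau}\bigl(f_{t_R}^{\Nivel}(\tilde\eta)\bigr)\in \p_{j-k\tau}(t_R).$$
Since the factor $f_{t_R}^{k\tau}$ is intrinsic to the map $f_{t_R}$ and plays no role of ``lift'', the entire condition becomes a condition on the image $\zeta=f_{t_R}^{\Nivel}(\eta)$ and on its ancestor cylinders inside $f_{t_R}^{\Nivel}(\eta')$: $\zeta\in f_{t_R}^{\Nivel}(\hat E_{N,t_R,\eta'})$ iff for every admissible $k$ there is no cylinder $\tilde\zeta\in\p_j(t_R)$ with $\zeta\subset\tilde\zeta\subset f_{t_R}^{\Nivel}(\eta')$ such that $f_{t_R}^{k\tau}(\tilde\zeta)\in\p_{j-k\tau}(t_R)$.

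Third, since this reformulated condition refers only to $f_{t_R}^{\Nivel}(\eta')$, and since by hypothesis $f_{t_R}^{\Nivel}(\eta')=f_{t_R}^{\Nivel}(\eta'')$, the set of cylinders $\zeta$ satisfying it is identical whether one starts from $\eta'$ or from $\eta''$. This gives
$$f_{t_R}^{\Nivel}(\hat E_{N,t_R,\eta'})=f_{t_R}^{\Nivel}(\hat E_{N,t_R,\eta''}),$$
as desired. The only delicate point is the bookkeeping in the bijection between cylinders of $\p_{\Nivel+j}(t_R)$ inside $\eta'$ and cylinders of $\p_j(t_R)$ inside $f_{t_R}^{\Nivel}(\eta')$; it is essentially definitional once one uses that $f_{t_R}^{\Nivel}$ is a smooth monotone homeomorphism on $\eta'$, and I do not expect any further obstacle.
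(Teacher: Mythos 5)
Your overall strategy is sound and is, at bottom, the same idea as the paper's: both proofs exploit that the membership criterion for $\hat E_{N,t_R,\eta'}$ is determined by the common image $f_{t_R}^{\Nivel}(\eta')=f_{t_R}^{\Nivel}(\eta'')$. The paper realizes this by transporting each cylinder $\omega'\subset\eta'$ to the corresponding $\omega''\subset\eta''$ with $f_{t_R}^{\Nivel}(\omega'')=f_{t_R}^{\Nivel}(\omega')$ and verifying directly that $\omega''$ is a cylinder of $\p_N(t_R)$ belonging to $\hat E_{N,t_R,\eta''}$; you phrase it as an intrinsic characterization of the set $f_{t_R}^{\Nivel}(\hat E_{N,t_R,\eta'})$.

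However, the ``bookkeeping'' step that you dismiss as essentially definitional is precisely where the paper's Case~1/Case~2 analysis lives, and as stated your bijection claim is not literally correct. The problem is the cylinders $\tilde\eta\in\p_{\Nivel+j}(t_R)$ with $\eta\subset\tilde\eta\subset\eta'$ that share a boundary point with $\eta'$. Such a boundary point $y\in\partial\eta'$ is a critical preimage at some level $q<\Nivel$, and since $c$ \emph{is} periodic for $f_{t_R}$ (this is built into the setup: $t_R$ is the right endpoint of a parameter cylinder, so $f_{t_R}^{i+1}(c)=c$ for some $i<j(N)$), the image $f_{t_R}^{\Nivel}(y)$ need not be a critical preimage at any level $<j$, so $f_{t_R}^{\Nivel}(\tilde\eta)$ is in general only a proper subinterval of a $\p_j(t_R)$-cylinder, not an element of $\p_j(t_R)$. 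To make your reformulation correct one must observe that for such a boundary $\tilde\eta$ the defining condition $f_{t_R}^{\Nivel+k\tau}(\tilde\eta)\in\p_{j-k\tau}(t_R)$ itself forces $y$ to also be a critical preimage at some level in $[\Nivel+k\tau,\Nivel+j)$, which restores the correspondence exactly in the relevant cases. This is a genuine, if small, verification involving the periodicity of the critical orbit of $f_{t_R}$; it is what the paper handles explicitly when it distinguishes boundary points $z_k''\in\partial\eta''$ from interior ones, using that $f_{t_R}^{\Nivel}(z_k')=f_{t_R}^{\Nivel}(z_k'')$ propagates level information forward. So the approach works, but the ``I do not expect any further obstacle'' sentence papers over the one real technical point of the lemma.
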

\begin{proof}
Let $\omega'=(y_1',y_2') \in \p_{N}(t_R)$, with $\omega' \subset \eta'$,  be a cylinder in $\hat{E}_{N,{t_R},\eta'}$. Then
\begin{equation}\label{eq_imagem_omega}
f_{t_R}^{\Nivel}(\omega') \subset f_{t_R}^{\Nivel}(\eta') = f_{t_R}^{\Nivel}(\eta'').
\end{equation}
Remember that since $\omega' \in \p_N(t_R)$, it follows that for all $x \in \omega'$
\begin{equation}\label{cilindro_omega}
f_{t_R}^i(x) \neq c \mbox{\;\; for all \;\;} 0 \le i < N,
\end{equation}
 and if $y \in \partial \omega'$, then there exists $j$, $0\le j < N$ such that $f_{t_R}^j(y)=c$. Define
 $$a_i = f_{t_R}^{\Nivel}(y'_i).$$
Then $f_{t_R}^{\Nivel}(\omega')=(a_1,a_2)$ is an open interval and, by Eq. (\ref{eq_imagem_omega}), we have $(a_1,a_2) \subset f_{t_R}^{\Nivel}(\eta'')$. Therefore, there is an open interval $\omega''= (y_1'',y_2'') \subset \eta''$ such that $f_{t_R}^{\Nivel}(\omega'')=(a_1,a_2)$ with 
$$a_i = f_{t_R}^{\Nivel}(y''_i).$$
We claim that  $\omega''$ is also a cylinder. Indeed, let $x\in \omega''$. Then, since $\omega'' \subset \eta''$ and $\eta''$ is a cylinder of level $\Nivel$, it follows that 
$$
f_{t_R}^i(x) \neq c,
$$
for all $1 \le i < \Nivel$. On the other hand, 
$$
f_{t_R}^{\Nivel}(\omega'')=f_{t_R}^{\Nivel}(\omega'),
$$
and by Eq. (\ref{cilindro_omega}), we can conclude that $f_{t_R}^i(x) \neq c$ for all $i$ satisfying $\Nivel \le i < N$. Therefore, for all $x\in \omega''$, we have $f_{t_R}^i(x) \neq c$ for all $0 \le i < N$.
Now, let $y_i'' \in \partial \omega_2$. Since $\omega'' \subset \eta''$, we have two cases. \\

\noindent {\bf Case $1$:} $y_i'' \in \partial \eta''$. In this case, there is an integer $j$, $0\le j < \Nivel$, such that  $f_{t_R}^j(y_i'')=c$.\\

\noindent {\bf Case $2$:} $y_i''\notin \partial \eta''$. In this case, $f_{t_R}^j(y_i'')\neq c$ for all $0\le j < \Nivel$. Then 
$f_{t_R}^{\Nivel}(y_i'')=a_i=f_{t_R}^{\Nivel}(y_i')$ belongs to the interior of $  f_{t_R}^{\Nivel}(\eta'')=f_{t_R}^{\Nivel}(\eta')$. Thus, $y_i'$ belongs to the interior of $\eta'$, which implies that there exists $j$ such that  $\Nivel\leq j< N$ such that $f_{t_R}^{j}(y_i')=f_{t_R}^{j}(y_i'')=c$.\\

\noindent Therefore $\omega'' \in \p_{N}(t_R)$.

By assumption, $\omega' \in \hat{E}_{N,{t_R},\eta'}$. Then for all $0 \le k \le \lfloor \frac{\mathcal{K} \log N}{\tau} \rfloor $, if 
$$\tilde{\omega}_k \in \p_{\Nivel + j(k)}(t_R),$$ 
where $\omega' \subset \tilde{\omega}_k \subset \eta' $ and 
$$j(k) = \min \{   (k+1) \tau, \lfloor\mathcal{K} \log{N} \rfloor \},$$
then there is  $z_k' \in \partial \tilde{\omega}$ satisfying 
\begin{equation}\label{fronteira_omegatil23}
f_{t_R}^{q_k'}(z_k')= c, \text{ for some $q_k'$, } 0\le q_k' <\Nivel +k \tau.
\end{equation}
In the same manner as for $\omega'$, there exists a unique cylinder $\hat{\omega}_k \in \p_{\Nivel+j(k)}$, $\hat{\omega}_k \subset \eta''$, such  $f_{t_R}^{\Nivel}(\tilde{\omega}_k)=f_{t_R}^{\Nivel}(\hat{\omega}_k)$. Note that    $\omega''\subset \hat{\omega}_k$. Let $z_k''\in \partial \hat{\omega}_k$ such that $$f_{t_R}^{\Nivel}(z_k')=f_{t_R}^{\Nivel}(z_k'').$$ 
If $z_k''\in \partial \eta''$ then there exists $i < \Nivel$ such that $f_{t_R}^{i}(z_k'')= c$. Define $q_k''=i$. \\
If $z_k'' \not \in \partial \eta''$ then $z_k' \not\in \partial \eta'$. Thus, $f_{t_R}^{q}(z_k')\neq c$ for every $q < \Nivel$, which implies that 
$$\Nivel \leq q_k' < \Nivel+k \tau.$$ Then  $f_{t_R}^{q_k'}(z_k'')=f_{t_R}^{q_k'}(z_k'')=c$. Define $q_k''=q_k'$. \\
In both cases we have $0\le q_k'' <\Nivel +k \tau$, then $\omega''\in \hat{E}_{N,{t_R},\eta''}$.
$$
f_{t_R}^{\Nivel}(\hat{E}_{N,{t_R},\eta''})\subset f_{t_R}^{\Nivel}(\hat{E}_{N,{t_R},\eta'}).
$$
A similar argument  shows that
$$
f_{t_R}^{\Nivel}(\hat{E}_{N,{t_R},\eta'})\subset f_{t_R}^{\Nivel}(\hat{E}_{N,{t_R},\eta''}).
$$
\end{proof}

\begin{proof}[Proof of Lemma \ref{lema_E}] Due to Lemma \ref{phase_par} it is enough to show that for every $\mathcal{K}' < \mathcal{K}$  there exists $C > 0$ and  $K=K(\mathcal{K}') >0$ such that
if  $J \in \p_j$, $j =j(N)$ then 
\begin{equation} |\hat{E}_{N,t_R}| \leq KN^{-C\mathcal{K}' }.\end{equation}
By Lemma \ref{lemma_card_ENJ} we have
$$
\# \hat{E}_{N,t_R, \eta'}  \leq 2^{\lfloor \frac{\mathcal{K}\log{N}}{\tau} \rfloor+1}.
$$
Let us define the set
$$
\Omega=\bigcup_{\eta' \in \p_{\Nivel}(t_R)} f_{t_R}^{\Nivel}(\hat{E}_{N,t_R,\eta'}).
$$
Note that
$$\hat{E}_{N,t_R}\subset f_{t_R}^{-(\Nivel)}(\Omega).$$
Therefore, if $\mu_{t_{R}}$ is the acip for $f_{t_R}$ we have 
\begin{equation}\label{upper}  \mu_{t_{R}}(\hat{E}_{N,t_R}) \leq \mu_{t_{R}}(\Omega).\end{equation}
In \cite[Section 6.2]{sch1} it is shown that there is $C'_1\ge 1$ such that for every density $\rho_t$ of the unique acip of $f_t$ 
$$
\frac{1}{C'_1}\le \rho_{t}(x)\le C'_1,
$$
for $\mu_{t}$-almost every $x\in [0,1]$, then
$$|\hat{E}_{N,t_R}|\leq  {C'_1}^2  |\Omega|.$$
Since $J \in \mathcal{P}_j$, $j=j(N)$, there exists an integer $p$, $0\le p< j$ such that $x_p(t_R)=f_{t_R}^p(f_{t_R}(c))=c$. In particular
$$\# \{ f^i_{t_R}(c)    \}_{i \geq 0} = p+1.$$
Thus,
$$\# \{ f_{t_R}^{\Nivel}(\eta'), \ \eta' \in    \p_{\Nivel}(t_R)   \}\leq (p+1)^2.$$
Therefore, by Lemma \ref{lemma_imagens},
\begin{align*}
|\hat{E}_{N,t_R}|&\le{C'_1}^2 |\Omega|  ={C'_1}^2| \cup_{\eta' \in \p_{\Nivel}(t_R)} f_{t_R}^{\Nivel}(\hat{E}_{\eta'}))| \\
&\le {C'_1}^2 (p+1)^2 \max_{\eta' \in    \p_{\Nivel}(t_R) }| {f_{t_R}^{\Nivel}(\hat{E}_{\eta'})}|\\
&\le {C'_1}^2(p+1)^2\left(\frac{1}{\lambda}\right)^{\lfloor \mathcal{K}\log{N} \rfloor } \#\left\{\eta \in \p_N(t_R)|_{\hat{E}_{\eta'}}\right\}\\
&\le {C'_1}^2 (p+1)^2\left(\frac{1}{\lambda}\right)^{\lfloor \mathcal{K}\log{N} \rfloor} 2^{\lfloor \frac{\mathcal{K}\log{N}}{\tau} \rfloor+1}\le {C'_1}^2 (p+1)^2 \left(\frac{1}{\lambda}\right)^{\frac{\lfloor\mathcal{K}\log{N}\rfloor}{2}}\\
&\le {C'_1}^2  j^2 \left(\frac{1}{\lambda}\right)^{\frac{\lfloor\mathcal{K}\log{N}\rfloor}{2}}\le {C'_1}^2 \left(\left\lfloor\frac{\log(C_3N)}{\log \lambda}\right\rfloor\right)^2 \left(\frac{1}{\lambda}\right)^{\frac{\lfloor\mathcal{K}\log{N}\rfloor}{2}}\\
&\leq K  N^{-\frac{\log \lambda}{2}\epsilon'}
\end{align*}
where $K=K(\epsilon')$.
\end{proof}

%----------------------------------------------%
%                     Estimates for the wild part                       % 
%----------------------------------------------%

\section{Estimates for the Wild part}\label{sec_wild}
We start this section with a technical lemma.
\begin{lemma}\label{norma_L1} 
Given a good transversal family  $f_t$ there are constants $L_1$ and $L_2$ such that the following holds.  
Let $\varphi:[0,1]\to \R$, $|\varphi|_{L^1(m)} >  0$,  be a function of bounded variation such that
$$
\int \varphi dm =0.
$$
Then
$$
\norm{(I-\PF_t)^{-1}(\varphi)}_{L^1} \le \left(L_1\log\frac{\norm{\varphi}_{BV}}{\norm{\varphi}_{L^1}}+L_2 \right)\norm{\varphi}_{L^1}.
$$
\end{lemma}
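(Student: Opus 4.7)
The plan is to write $(I-\PF_t)^{-1}\varphi$ as the Neumann series $\sum_{n=0}^{\infty} \PF_t^n \varphi$, bound the series in two different ways depending on whether $n$ is small or large, and optimize the splitting point. For small $n$ I use the trivial contraction $\|\PF_t^n \varphi\|_{L^1}\le \|\varphi\|_{L^1}$, which is valid because $\PF_t$ is a Markov operator on $L^1(m)$. For large $n$ I use a uniform exponential decay of $\|\PF_t^n \varphi\|_{BV}$ that holds on the zero-mean subspace of BV.

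The crucial input is a uniform spectral gap: there exist $C_*>0$ and $\gamma\in(0,1)$, independent of $t\in[a,b]$, such that for every $\varphi\in BV$ with $\int\varphi\,dm=0$,
\begin{equation}\label{plan_gap}
\|\PF_t^n \varphi\|_{BV}\le C_*\gamma^n\|\varphi\|_{BV}.
\end{equation}
This is obtained by iterating the uniform Lasota--Yorke inequality in condition (V)$A_1$ (so that $|\PF_t^{kn_0}\varphi|_{BV}\le 2^{-k}|\varphi|_{BV}+C\|\varphi\|_{L^1}$ for some $n_0$), combining with mixing of $f_t$ (which forces $\PF_t$ to have a simple leading eigenvalue $1$ with eigenspace $\mathbb{R}\rho_t$), and invoking Keller--Liverani perturbation theory (or the direct argument based on continuity of $t\mapsto\PF_t$ together with the uniform Lasota--Yorke bound) to make the gap uniform in $t$. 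Equation \eqref{plan_gap} immediately gives $\|\PF_t^n\varphi\|_{L^1}\le C_*\gamma^n\|\varphi\|_{BV}$.

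With both bounds in hand, for any integer $N\ge 1$,
\begin{equation*}
\|(I-\PF_t)^{-1}\varphi\|_{L^1}\le \sum_{n=0}^{N-1}\|\PF_t^n\varphi\|_{L^1}+\sum_{n=N}^{\infty}\|\PF_t^n\varphi\|_{L^1}\le N\|\varphi\|_{L^1}+\frac{C_*}{1-\gamma}\,\gamma^N\|\varphi\|_{BV}.
\end{equation*}
Now I choose $N$ so that $\gamma^N\|\varphi\|_{BV}\approx \|\varphi\|_{L^1}$, namely
\begin{equation*}
N=\left\lceil \frac{1}{\log(1/\gamma)}\log\frac{\|\varphi\|_{BV}}{\|\varphi\|_{L^1}}\right\rceil\vee 1,
\end{equation*}
which is a non-negative integer because $\|\varphi\|_{L^1}\le \|\varphi\|_{BV}$. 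Substituting, the first term contributes $L_1\log(\|\varphi\|_{BV}/\|\varphi\|_{L^1})\|\varphi\|_{L^1}$ with $L_1=1/\log(1/\gamma)$ (up to the $+1$ from the ceiling, absorbed in $L_2$), and the second term is bounded by $(C_*/(1-\gamma))\|\varphi\|_{L^1}$. This yields the claimed inequality with suitable constants $L_1,L_2$ that depend only on the family.

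The main obstacle is establishing the uniform spectral gap \eqref{plan_gap}; after that the rest of the argument is the standard BV-versus-$L^1$ interpolation trick. Uniformity across the family is not automatic from condition (V)$A_1$ alone, but follows from the Keller--Liverani stability of the spectral gap combined with continuity of $t\mapsto\PF_t$ as an operator from $BV$ to $L^1$, both of which are available under the good-transversal-family hypotheses. Once \eqref{plan_gap} is in force the optimization over $N$ is routine and produces constants $L_1,L_2$ that are uniform in $t\in[a,b]$.
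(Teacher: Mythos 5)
Your proof is correct and follows essentially the same strategy as the paper: split the Neumann series $\sum_n \mathcal{L}_t^n\varphi$ at a level $N\approx\log(\|\varphi\|_{BV}/\|\varphi\|_{L^1})/\log(1/\gamma)$, bound the head by the trivial $L^1$-contraction and the tail by the uniform spectral gap on the zero-mean subspace, then balance. The only cosmetic difference is that the paper routes the tail estimate through $|\mathcal{L}_t^{j_0}\varphi|_{BV}$ and then invokes the Lasota--Yorke inequality of (V)$A_1$, whereas you apply the gap directly to $\mathcal{L}_t^n\varphi$; the paper also simply asserts the $t$-uniform spectral gap, which your Keller--Liverani sketch correctly justifies.
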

\proof
\noindent Let $\tilde{\jmath}>0$ such that
$$
L\tilde{\beta}^{\tilde{\jmath}}\norm{\varphi}_{BV}  = \norm{\varphi}_{L^1}.
$$
And let $j_0$ the smallest integer such that $j_0-1\le \tilde{\jmath} \le j_0$. Hence, we have
$$
(I-\PF_t)^{-1}(\varphi)= \sum_{i=0}^{j_0} \PF_{t}^i(\varphi) + \sum_{l=1}^{\infty} \PF_{t}^l(\PF_t^{j_0}(\varphi)).
$$
Observing Assumption $(V)A_1$, the fact that $|\PF_t^l\varphi|_{BV} \le \tilde{L}\theta^l|\varphi|_{BV}$, when $\int \varphi \, dm = 0$ with constants $\tilde{L}$ and $\theta$ uniform in $t$, as well as the elementary facts that $|\PF_t|_{L^1} = 1$ and $|\cdot|_{L^1}\le |\cdot|_{BV}$, we see that
\begin{align*}
|(I-\PF_t)^{-1}(\varphi)|_{L^1} &\le (j_0+1)|\varphi|_{L^1}+ \frac{\tilde{L}}{1-\theta}|\PF_t^{j_0}\varphi|_{BV}\\
&\le (j_0+1)|\varphi|_{L^1}+ \frac{\tilde{L}}{1-\theta}\left(\tilde{C}_6\beta^{j_0}|\varphi|_{BV}+\tilde{C}_5|\varphi|_{L^1}\right)\\
&\le \left(c_1\beta^{j_0}\frac{|\varphi|_{BV}}{|\varphi|_{L^1}}+(j_0+c_2)\right)|\varphi|_{L^1}.
\end{align*}
By the choice of $j_0$, we have the desired estimate.

\endproof

The following proposition  will be quite important to study the Wild part of the decomposition. Denote
$$\mbox{supp}(\psi)= \overline{\{ x \in [0,1]\colon \ \psi(x)\neq 0\}}.$$

\begin{proposition}\label{var_LN3b}
 There exist $K, K_1', K_2' >0$ such that the following holds. For all $i,k\geq 0$, $t\in [0,1]$ and  $h\neq 0$, let 
$$\varphi_{k,i,h}=\frac{1}{h}  \PF^{i}_{t+h}\left(H_{f_{t+h}(f_t^k(c))} - H_{f_{t}(f_t^k(c))}\right).$$
Then
\begin{equation} \label{wild34}
\big| \varphi_{k,i,h} \big|_{L^1}  \le K,
\end{equation} 
and
\begin{equation} \label{wild345}
\big| \varphi_{k,i,h} \big|_{BV}  \le \frac{K}{|h|}.
\end{equation} 
Furthermore,
\begin{equation}\label{logbv}
\norm{(I-\PF_{t+h})^{-1}\Pi_{t+h} (\varphi_{k,i,h}) }_{L^1} \leq K_1' \max \{0, \log |\varphi_{k,i,h}|_{BV}\}+ K_2'.\\
\end{equation}
\end{proposition}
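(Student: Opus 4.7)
The plan is to establish the three estimates in the order they are stated, since (\ref{logbv}) directly invokes the previous two via Lemma \ref{norma_L1}. The whole proof reduces to controlling the step-function difference $H_{f_{t+h}(f_t^k(c))} - H_{f_t(f_t^k(c))}$ and then propagating its $L^1$ and $BV$ norms through $\PF^i_{t+h}$.

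First I would observe that, since $s \mapsto f_s$ is $C^2$ into $B^4$, the mean value theorem gives
$$|f_{t+h}(f_t^k(c)) - f_t(f_t^k(c))| \le C_4 |h|$$
uniformly in $t, k, h$. Because $H_a - H_b$ equals $\pm 1$ on the interval between $a$ and $b$ and vanishes elsewhere, we have $|H_a - H_b|_{L^1} = |a-b|$ and $TV(H_a - H_b) \le 2$. For (\ref{wild34}), the fact that $\PF_{t+h}$ is an $L^1$-contraction yields
$$|\varphi_{k,i,h}|_{L^1} \le \frac{|H_a - H_b|_{L^1}}{|h|} \le C_4,$$
uniformly. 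For (\ref{wild345}), I apply the uniform Lasota-Yorke inequality (V)$A_1$ to $\PF^i_{t+h}(H_a - H_b)$ and divide by $|h|$; the BV-norm contribution produces a term of order $\tilde{C}_6 \beta^i (2 + C_4|h|)/|h|$, and the $L^1$-norm contribution is $O(1)$, so for $|h| \le 1$ this is bounded by $K/|h|$.

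For (\ref{logbv}) I set $\psi = \Pi_{t+h}(\varphi_{k,i,h})$. Since $\int \psi \, dm = 0$, Lemma \ref{norma_L1} applies and gives
$$\|(I-\PF_{t+h})^{-1}\psi\|_{L^1} \le \left(L_1 \log\frac{\|\psi\|_{BV}}{\|\psi\|_{L^1}} + L_2\right)\|\psi\|_{L^1}.$$
Uniform boundedness of $\Pi_{t+h}$ on $L^1$ and on $BV$ is immediate from $\Pi_{t+h}(g) = g - \rho_{t+h}\int g\, dm$ together with $|\rho_{t+h}|_{BV} < \tilde{C}_1$ (property (V)$A_2$), so $\|\psi\|_{L^1} \le 2K$ and $\|\psi\|_{BV} \le (1+\tilde{C}_1)|\varphi_{k,i,h}|_{BV}$.

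The delicate point — and the only real obstacle — is that the lemma gives $\log(\|\psi\|_{BV}/\|\psi\|_{L^1})$, whereas the claim involves $\log|\varphi_{k,i,h}|_{BV}$; a priori $\|\psi\|_{L^1}$ can be arbitrarily small. I handle this by splitting
$$\|\psi\|_{L^1}\log\frac{\|\psi\|_{BV}}{\|\psi\|_{L^1}} = \|\psi\|_{L^1}\log \|\psi\|_{BV} - \|\psi\|_{L^1}\log \|\psi\|_{L^1},$$
and using the elementary inequality $-x\log x \le e^{-1}$ valid for all $x > 0$ to bound the second term by a constant (regardless of how small $\|\psi\|_{L^1}$ is), while the first term is at most $2K \log[(1+\tilde{C}_1)|\varphi_{k,i,h}|_{BV}]$. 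Considering the two cases $|\varphi_{k,i,h}|_{BV} \ge 1$ (where $\log|\varphi_{k,i,h}|_{BV} \ge 0$ absorbs the log) and $|\varphi_{k,i,h}|_{BV} < 1$ (where $\|\psi\|_{BV}$ and $\|\psi\|_{L^1}$ are both absolutely bounded, and the whole right-hand side is a constant) yields the desired bound $K_1' \max\{0, \log|\varphi_{k,i,h}|_{BV}\} + K_2'$.
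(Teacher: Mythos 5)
Your proof is correct and follows exactly the route of the paper: bound $|H_a-H_b|_{L^1}$ by $O(|h|)$ and its variation by $2$, use that $\PF_{t+h}$ is an $L^1$-contraction and the uniform Lasota-Yorke inequality (V)$A_1$ for the two preliminary estimates, control $\Pi_{t+h}$ via $|\rho_{t+h}|_{BV}\le\tilde C_1$, and then feed everything into Lemma \ref{norma_L1}. The only place where you differ is that you make explicit the step the paper dismisses as "easily obtain[ed]"; namely, handling the possibly small $\|\psi\|_{L^1}$ in the denominator of the log via the elementary inequality $-x\log x\le e^{-1}$ together with the case split on $|\varphi_{k,i,h}|_{BV}\gtrless 1$, which is indeed the right way to close that gap.
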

\begin{proof}
Note that 
 \begin{align}\label{est1}
&\big| \PF_{t+h}^{i}\left(H_{f_{t+h}(f_t^k(c))} - H_{f_{t}(f_t^k(c))}\right)\big|_{L^1} \nonumber \\ 
&\leq \big| H_{f_{t+h}(f_t^k(c))} - H_{f_{t}(f_t^k(c))} \big|_{L^1} \nonumber \\
&\leq  (\sup_t  |v_t|) |h|,
\end{align}
and, by Assumption (V) in Definition \ref{goodf}
 \begin{align}\label{est11}
&\big| \PF_{t+h}^{i}\left(H_{f_{t+h}(f_t^k(c))} - H_{f_{t}(f_t^k(c))}\right)\big|_{BV} \nonumber \\ 
&\leq 2\tilde{C}_6 \beta^i + \tilde{C}_5 (\sup_t  |v_t|) |h| \leq  \hat{C}.
\end{align}
Thus, we have Eqs. (\ref{wild34}) and (\ref{wild345}). In particular 
$$|\Pi_{t+h}(\varphi_{k,i,h})|_{L^1(m)}\leq 2|\varphi_{k,i,h}|_{L^1(m)}\leq 2K,$$
and if $h$ is small 
$$|\Pi_{t+h}(\varphi_{k,i,h})|_{BV}\leq  |\varphi_{k,i,h}|_{BV}   +|\varphi_{k,i,h}|_{BV} \sup_{t \in [0,1]} |\rho_t|_{BV} \leq C |\varphi_{k,i,h}|_{BV} ,$$
where $C \geq 1$. 

\noindent Now we can easily obtain Eq. (\ref{logbv}) applying Lemma \ref{norma_L1}.

\end{proof}

\begin{proposition}\label{var_LN3} Let $\phi$ be a Lipchitz function. 
There exists $K>0$ such that the following holds. Let $t\in \Gamma_{h',h_0}^\delta$ and $0< |h|\leq h'$. Then 
\begin{equation} \label{wild33}
\emph{var}\big(  \frac{1}{h} \PF^{i}_{t+h}\left(H_{f_{t+h}(f_t^k(c))} - H_{f_{t}(f_t^k(c))}\right) \big)  \le \frac{K}{|h| |Df_{t}^{i}(f_{t}^{k+1}(c))|},
\end{equation} 
and 
 \begin{align} \label{dirac2}
 &\int \phi(x) \PF_{t+h}^i \left(\frac{H_{f_{t+h}(f_t^k(c))} - H_{f_{t}(f_t^k(c))}}{h} \right) (x) \ dx \nonumber  \\
 &=\phi(f_{t}^{i+k+1}(c))v_t(f^k_t(c)) + O(|Df_{t}^i(f_{t}^{k+1}(c))| |h|),
 \end{align}
where $0\le k \le N_3(t,h)$ and $i < N_3(t,h)- k$.
\end{proposition}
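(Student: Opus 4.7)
The plan is to reduce everything to a careful analysis of the transfer operator applied to an indicator of a tiny interval, on which the orbit behaves nicely thanks to condition (B) of Proposition \ref{main_prop}.

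Write $a=f_{t+h}(f_t^k(c))$ and $b=f_t^{k+1}(c)$. From the definition of $H$, one checks that $H_a-H_b=-\chi_{[a_*,b_*)}$ where $a_*=\min(a,b)$, $b_*=\max(a,b)$, so $|b_*-a_*|\le C_4|h|$. A Taylor expansion in the parameter yields
$$a-b=f_{t+h}(f_t^k(c))-f_t(f_t^k(c))=h\,v_t(f_t^k(c))+O(h^2).$$
The crucial geometric input is that $f_{t+h}^j$ is monotone on $[a_*,b_*)$ for every $0\le j<N_3(t,h)-k$. Indeed, by induction on $j$ the image $f_{t+h}^j([a_*,b_*))$ is an interval, and its two endpoints are precisely $f_{t+h}^{j+1}(f_t^k(c))$ and $f_{t+h}^j(f_t^{k+1}(c))$, two of the four points entering the definition of $I_{j,k}(t,h)$. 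Condition (B) of Proposition \ref{main_prop} ensures $c\notin I_{j,k}(t,h)$, hence $c\notin f_{t+h}^j([a_*,b_*))$, which keeps monotonicity alive at the next step. Standard bounded distortion then gives $|Df_{t+h}^i(y)|\asymp |Df_{t+h}^i(f_t^{k+1}(c))|$ on $[a_*,b_*)$, and, since $t$ and $t+h$ belong to the same cylinder of level $N_3(t,h)$ and $|h|<1/N_3(t,h)$, Remark \ref{const2} upgrades this to $|Df_{t+h}^i(y)|\asymp |Df_t^i(f_t^{k+1}(c))|$.

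For \eqref{wild33}, since $f_{t+h}^i$ is a $C^4$ diffeomorphism on $[a_*,b_*)$, the function $\PF_{t+h}^i\chi_{[a_*,b_*)}$ is supported on the image interval $f_{t+h}^i([a_*,b_*))$, takes the smooth value $1/|Df_{t+h}^i|$ pulled back under $(f_{t+h}^i)^{-1}$, and vanishes outside. Its total variation therefore consists of (i) the two boundary jumps, each of size $\asymp 1/|Df_t^i(f_t^{k+1}(c))|$, and (ii) the interior variation. The latter is controlled by the distortion estimate $|\frac{d}{dx}\tfrac{1}{|Df_{t+h}^i|}|\lesssim 1/|Df_t^i|$, integrated over the image of length $\lesssim |Df_t^i|\cdot|h|$, yielding an interior variation $\lesssim |h|\le 1/|Df_t^N|\le 1/|Df_t^i|$. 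Summing and dividing by $|h|$ gives \eqref{wild33}.

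For \eqref{dirac2}, apply duality: $\int \phi\,\PF_{t+h}^i(H_a-H_b)\,dm=-\int_a^b \phi(f_{t+h}^i(y))\,dy$. The map $y\mapsto \phi(f_{t+h}^i(y))$ is Lipschitz with constant $\lesssim \mathrm{Lip}(\phi)\cdot|Df_t^i(f_t^{k+1}(c))|$ on $[a_*,b_*)$, so
$$\int_a^b\phi(f_{t+h}^i(y))\,dy=\phi\bigl(f_{t+h}^i(b)\bigr)(b-a)+O\bigl(|Df_t^i(f_t^{k+1}(c))|\,h^2\bigr).$$
Next, the standard formula $\partial_s f_s^i(x)=\sum_{\ell=0}^{i-1}Df_s^{i-1-\ell}(f_s^{\ell+1}(x))\,v_s(f_s^\ell(x))$ together with geometric telescoping yields $|f_{t+h}^i(b)-f_t^{i+k+1}(c)|\lesssim |Df_t^i(f_t^{k+1}(c))|\,|h|$, and by the Lipschitz property of $\phi$, $\phi(f_{t+h}^i(b))=\phi(f_t^{i+k+1}(c))+O(|Df_t^i|\,|h|)$. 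Combining this with $b-a=-h\,v_t(f_t^k(c))+O(h^2)$ and dividing by $h$ gives
$$\int \phi\,\PF_{t+h}^i\!\left(\tfrac{H_a-H_b}{h}\right)dm=\phi(f_t^{i+k+1}(c))\,v_t(f_t^k(c))+O\bigl(|Df_t^i(f_t^{k+1}(c))|\,|h|\bigr),$$
which is \eqref{dirac2}. The main obstacle is the bookkeeping: propagating condition (B) through the inductive monotonicity argument, making sure the distortion and parameter-derivative estimates both produce the \emph{same} factor $|Df_t^i(f_t^{k+1}(c))|$ in the error term, and tracking the signs so that the leading term in \eqref{dirac2} appears with the correct orientation.
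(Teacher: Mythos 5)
Your proof is correct and follows essentially the same route as the paper: both proofs hinge on condition (B) of Proposition~\ref{main_prop} to ensure that $f_{t+h}^i$ is a diffeomorphism on the short interval between $f_{t+h}(f_t^k(c))$ and $f_t^{k+1}(c)$, express $\PF_{t+h}^i$ of the Heaviside difference as a function supported on the tiny image interval (with a single smooth piece bounded by $1/|Df^i|$), and then estimate its variation by boundary jumps plus interior oscillation, and its pairing with $\phi$ via the Lipschitz bound. The only cosmetic difference is that you organize the pairing estimate via the duality identity $\int \phi\,\PF_{t+h}^i\psi\,dm=\int(\phi\circ f_{t+h}^i)\psi\,dm$ and Taylor-expand on the domain side, whereas the paper splits off $\phi(f_t^{i+k+1}(c))$ and bounds the remainder directly on the support of the pushforward; the two are equivalent and produce the same error term $O\big(|Df_t^i(f_t^{k+1}(c))|\,|h|\big)$.
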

\begin{proof} 
By Eq. (\ref{semc1}), the points $f_{t+h}^{k+1}(c), f_{t+h}(f_t^{k}(c)), f_t(f_{t}^{k}(c))$ belong to the same interval  of monotonicity  of $f_{t+h}^{i}$.  Let 
$$\zeta\colon Dom(\zeta) \rightarrow Im(\zeta)$$
be an inverse branch associated to such interval  of monotonicity, that is, $\zeta$ is a diffeomorphism such that $f^{i}_{t+h}(\zeta(y))=y$ for every $ y\in Dom(\zeta)$ and 
$$\{ f_{t+h}^{k+1}(c), f_{t+h}(f_t^{k}(c)), f_t(f_{t}^{k}(c))\}\subset Im(\zeta).$$
Hence,
\begin{align}\label{1ramo}
&\PF_{t+h}^{i}\left(H_{f_{t+h}(f_t^k(c))} - H_{f_{t}(f_t^k(c))}\right) (x) \nonumber\\
&=\frac{1}{Df^{i}_{t+h}(\zeta(x))}\fc_{Dom(\zeta)}(x)\left(H_{f_{t+h}(f_t^k(c))}(\zeta(x))-H_{f_{t}(f_t^k(c))}(\zeta(x))\right).
\end{align}
There is a constant $K\geq 1$ such that for all $t \in [0,1]$, $h$, and $i$, and every interval of monotonicity $Q$ of $f_{t+h}^i$ we have
$$
\frac{1}{K}\leq \left|\frac{Df_{t+h}^i(y_1)}{Df_{t+h}^i(y_2)}\right|\leq K 
$$ for all $y_1,y_2 \in Q$.  Now we can estimate  the variation of the function in Eq. (\ref{wild33}) using familiar properties of the variation of  functions (see Chapter $3$ from Viana \cite{viana}, for instance).
\begin{align*}
&\mbox{var}_{[0,1]}\left(\PF_{t+h}^{i}\left(H_{f_{t+h}(f_t^k(c))} - H_{f_{t}(f_t^k(c))}\right)\right) \\
&=\mbox{var}_{[0,1]}\left(\frac{1}{Df^{i}_{t+h}(\zeta(x))}\fc_{Dom(\zeta)}(x)\left(H_{f_{t+h}(f_t^k(c))}(\zeta(x))-H_{f_{t}(f_t^k(c))}(\zeta(x))\right)\right)\\
&=\mbox{var}_{Dom(\zeta)}\left(\frac{1}{Df^{i}_{t+h}(\zeta(x))}\right)\sup_{[0,1]}\left(H_{f_{t+h}(f_t^k(c))} - H_{f_{t}(f_t^k(c))}\right)\\
&+2\sup_{[0,1]}\left(\frac{1}{Df^{i}_{t+h}(\zeta(x))}\fc_{Dom(\zeta)}(x)\right)\sup_{[0,1]}\left(H_{f_{t+h}(f_t^k(c))} - H_{f_{t}(f_t^k(c))}\right)\\
&+\sup_{[0,1]}\left(\frac{1}{Df^{i}_{t+h}(\zeta(x))}\fc_{Dom(\zeta)}(x)\right)\mbox{var}_{[0,1]}\left(H_{f_{t+h}(f_t^k(c))} - H_{f_{t}(f_t^k(c))}\right)\\
&\leq 2 \mbox{var}_{Dom(\zeta)}\left(\frac{1}{Df^{i}_{t+h}(\zeta(x))}\right)+ \frac{6K}{|Df^{i}_{t+h}(f_{t+h}^{k+1}(c))|}.
\end{align*}
Now, note that since $\zeta$  is a diffeomorphism, it follows that 
\begin{align*}
&\mbox{var}_{Dom(\zeta)}\left(\frac{1}{Df^{i}_{t+h}(\zeta(x))}\right)=\mbox{var}_{Im(\zeta)}\left(\frac{1}{Df^{i}_{t+h}(y)}\right)\\
&=\int_{Im(\zeta)}\left|D\left(\frac{1}{Df^{i}_{t+h}(y)}\right)\right|dy \\
&= \int_{Im(\zeta)}\left|\sum_{j=1}^{i}-\frac{D^2f_{t+h}(f_{t+h}^{j-1}(y))}{Df_{t+h}^{i-j}(f_{t+h}^j(y))\left(Df_{t+h}(f_{t+h}^{j-1}(y))\right)^2}\right|dy\\
&\le K_1|Im(\zeta)|\le K_1\frac{|Dom(\zeta)|}{|Df_{t+h}^{i}(f_{t+h}^{k+1}(c))|}\\
&\le \frac{C K_2}{|Df_{t+h}^{i}(f_{t+h}^{k+1}(c))|}.
\end{align*}
Here we used that
\begin{eqnarray} & \ &\left| \sum_{j=1}^{i}-\frac{D^2f_{t+h}(f_{t+h}^{j-1}(y))}{Df_{t+h}^{i-j}(f_{t+h}^j(y))\left(Df_{t+h}(f_{t+h}^{j-1}(y))\right)^2}\right| \nonumber \\
&\leq& \sum_{j=1}^{i}   \frac{C}{\lambda^{i-j}}\nonumber \\
&\leq& K_1,\end{eqnarray}
and that 
$$|Im(\zeta)|\leq K \frac{|Dom(\zeta)|}{|Df_{t+h}^{i}(f_{t+h}^{k+1}(c))|}\leq K \frac{1}{|Df_{t+h}^{i}(f_{t+h}^{k+1}(c))|}.$$

\noindent Therefore,
\begin{align}\label{estbv}
&\mbox{var}_{[0,1]}\left(\PF_{t+h}^{i}\left(H_{f_{t+h}(f_t^k(c))} - H_{f_{t}(f_t^k(c))}\right)\right)\le\frac{K_3}{|Df_{t+h}^{i}(f_{t+h}^{k+1}(c))|}.
\end{align}
Finally, by Eq. (\ref{semc1}) note that the combinatorics up to $i$ iterations of $f_{t+h}^{k+1}(c)$ by the map $f_{t+h}$ is the same as the combinatorics up to $i$ iterations of $f_t^{k+1}(c)$ by the map $f_{t}$. By Remark \ref{const2} we obtain
\begin{equation}\label{distrr}
\frac{1}{|Df_{t+h}^{i}(f_{t+h}^{k+1}(c))|}\leq C_1 \frac{1}{|Df_{t}^{i}(f_{t}^{k+1}(c))|}.
\end{equation} 
Eqs. (\ref{distrr}) and (\ref{estbv}) give us Eq. (\ref{wild33}). Since
 $$\mbox{supp} \frac{1}{h}\left(H_{f_{t+h}(f_t^k(c))} - H_{f_{t}(f_t^k(c))}\right) = [f_{t+h}(f_t^k(c)), f_{t}(f_t^k(c))],$$
 by Eq. (\ref{1ramo}) we conclude that 
 $$ Z_{i,k}=\mbox{supp} \frac{1}{h}\PF_{t+h}^i \left(H_{f_{t+h}(f_t^k(c))} - H_{f_{t}(f_t^k(c))}\right)=[f^{i+1}_{t+h}(f_t^k(c)), f^{i}_{t+h}(f_t^{k+1}(c))].$$
 By Eq. (\ref{semc1}), the points $f_{t+h}^{k+1}(c), f_{t+h}(f_t^{k}(c)), f_t(f_{t}^{k}(c))$ belong to the same interval  of monotonicity  of $f_{t+h}^{i}$. Hence,
 \begin{align}\label{supp}
 &diam \  \mbox{supp} \frac{1}{h}\PF_{t+h}^i \left(H_{f_{t+h}(f_t^k(c))} - H_{f_{t}(f_t^k(c))}\right)  \nonumber \\
&= diam  \ [f^{i+1}_{t+h}(f_t^k(c)), f^{i}_{t+h}(f_t^{k+1}(c))] \nonumber \\ 
 &= |f^{i+1}_{t+h}(f_t^k(c))- f^{i}_{t+h}(f_{t}^{k+1}(c))| \nonumber \\
 &\le  K|Df_{t+h}^i(f_{t}^{k+1}(c))||f_{t+h}(f_t^k(c))- f_{t}(f_t^k(c))| \nonumber \\
  &\le K|Df_{t+h}^i(f_{t}^{k+1}(c))||\sup_t v_t| |h| \nonumber \\
   &\le  C_1 K|Df_{t}^i(f_{t}^{k+1}(c))||\sup_t v_t| |h|.
 \end{align}
 Therefore,
 \begin{align}
 &\int \phi(x) \PF_{t+h}^i \left(\frac{H_{f_{t+h}(f_t^k(c))} - H_{f_{t}(f_t^k(c))}}{h} \right) (x) \ dx \nonumber  \\
 &= \phi(f_{t}^{i+k+1}(c)) \int \PF_{t+h}^i \left(\frac{H_{f_{t+h}(f_t^k(c))} - H_{f_{t}(f_t^k(c))}}{h} \right) (x) \ dx \nonumber \\
 &+    \int \left(\phi(x)-\phi(f_{t}^{i+k+1}(c))\right) \PF_{t+h}^i \left(\frac{H_{f_{t+h}(f_t^k(c))} - H_{f_{t}(f_t^k(c))}}{h} \right) (x) \ dx.
 \end{align}
 Note that 
\begin{align}
 &\int \PF_{t+h}^i \left(\frac{H_{f_{t+h}(f_t^k(c))} - H_{f_{t}(f_t^k(c))}}{h} \right) (x) \ dx \nonumber \\
&= \int \frac{H_{f_{t+h}(f_t^k(c))} - H_{f_{t}(f_t^k(c))}}{h} (x)dx = v_t(f_t^k(c))+ O(|h|).
\end{align}
Due to Eq (\ref{supp}) and the fact that $\phi$ is a lipschitzian function with Lipschitz constant $L$, and that $f_{t}^{i+k+1}(c) \in Z_{i,k}$
 \begin{align}
 &\big| \int \left(\phi(x)-\phi(f_{t}^{i+k+1}(c))\right) \PF_{t+h}^i \left(\frac{H_{f_{t+h}(f_t^k(c))} - H_{f_{t}(f_t^k(c))}}{h} \right) (x) \ dx \big| \nonumber \\
 &\le\int_{Z_{i,k}} |\phi(x)-\phi(f_{t}^{i+k+1}(c))| \big| \PF_{t+h}^i \left(\frac{H_{f_{t+h}(f_t^k(c))} - H_{f_{t}(f_t^k(c))}}{h} \right) (x)\big| \ dx \nonumber  \\
 &\le L  C_1 K|Df_{t}^i(f_{t}^{k+1}(c))||\sup_t v_t| |h| \big| \PF_{t+h}^i \left(\frac{H_{f_{t+h}(f_t^k(c))} - H_{f_{t}(f_t^k(c))}}{h} \right)\big|_{L^1} \nonumber \\
 &\le L  C_1 K|Df_{t}^i(f_{t}^{k+1}(c))||\sup_t v_t|^2 |h|.  
 \end{align} 
 
\end{proof}

\begin{proof}[Proof of Proposition \ref{wild_part}] 
Let $\Phi_h$ be as in  Proposition \ref{parteB}, that is 
$$
\Phi_h = \frac{1}{h}\sum_{k=0}^{\infty}s_{k+1}(t)\Pi_{t+h}\left(H_{f_{t+h}(f_t^k(c))}- H_{f_{t}(f_t^k(c))}\right).
$$
Given $t \in \Gamma^\delta_{h,h_0}$.  Let  $\Ncerto(t,h)$ be as in  Proposition \ref{main_prop}. Since $t$ and $h$ are fixed throughout this proof, we will write $\Ncerto$ instead of $\Ncerto(t,h)$ and $N$ instead of $N(t,h)$. Let us divide $\Phi_h$   as follows
$$
\Phi_h = S_{1} + S_{2}.
$$
Where
\begin{align*}
S_{1}=\frac{1}{h}\sum_{k=0}^{\Ncerto}s_{k+1}(t)\Pi_{t+h}\left(H_{f_{t+h}(f_t^k(c))}- H_{f_{t}(f_t^k(c))}\right)
\end{align*}
and
\begin{align*}
S_{2}=\frac{1}{h}\sum_{k=\Ncerto+1}^{\infty}s_{k+1}(t)\Pi_{t+h}\left(H_{f_{t+h}(f_t^k(c))}- H_{f_{t}(f_t^k(c))}\right).
\end{align*}
Let us first estimate $S_{2}$.
\begin{align*}
(I-\PF_{t+h})^{-1}S_{2}=\frac{1}{h}&\sum_{k=\Ncerto+1}^{\infty}s_{k+1}(t)(I-\PF_{t+h})^{-1}\Pi_{t+h}\left(H_{f_{t+h}(f_t^k(c))} - H_{f_{t}(f_t^k(c))}\right).
\end{align*}
Thus,
\begin{align*}
&\norm{(I-\PF_{t+h})^{-1}S_{2}}_{L^1}\\
&\le\sum_{k=\Ncerto+1}^{\infty}|s_{k+1}(t)|
\norm{\frac{1}{h}(I-\PF_{t+h})^{-1}\Pi_{t+h}\left(H_{f_{t+h}(f_t^k(c))}- H_{f_{t}(f_t^k(c))}\right)}_{L^1}.
\end{align*}
\noindent By Proposition \ref{var_LN3b} and Lemma \ref{norma_L1}, taking 
$$
\varphi = \frac{1}{h} \Pi_{t+h}\left(H_{f_{t+h}(f_t^k(c))} - H_{f_{t}(f_t^k(c))}\right),
$$
we have,
\begin{align*}
&\norm{(I-\PF_{t+h})^{-1}\frac{1}{h}\Pi_{t+h}\left(H_{f_{t+h}(f_t^k(c))}- H_{f_{t}(f_t^k(c))}\right)}_{L^1}\\
&\le K_1 \log\frac{1}{|h|} +K_2\le K_1 \log\Lambda^{N+1} +K_2\\
&\le K_1(N+1)\log \Lambda +K_2 \le K_3N + K_4.
\end{align*}
Therefore, 
\begin{align*}
\norm{(I-\PF_{t+h})^{-1}S_{2}}_{L^1}&\le\sum_{k=\Ncerto+1}^{\infty}\frac{1}{\lambda^{k}}(K_3N +K_4)\le \frac{K_5N}{\lambda^{\Ncerto}} +K_6 \\
&\le  \frac{K_5N}{\lambda^{N-C_5\mathcal{K}\log N}} +K_6 \le K_7h^{K_8\log \lambda}\left(\log\frac{1}{|h|}\right)^{1+C_5\mathcal{K}\log\lambda} + K_6.
\end{align*}
It is left to analyze $S_{1}$. Applying the operator $(I-\PF_{t+h})^{-1}$,
\begin{align*}
(I-\PF_{t+h})^{-1}\left(S_{1}\right)=\frac{1}{h}\sum_{i=0}^{\infty}\PF_{t+h}^i \sum_{k=0}^{\Ncerto}s_{k+1}(t)
\Pi_{t+h}\left(H_{f_{t+h}(f_t^k(c))}- H_{f_{t}(f_t^k(c))}\right).
\end{align*}
Then
\begin{align*}
(I-\PF_{t+h})^{-1}\left(S_{1}\right)&= \frac{1}{h}\sum_{k=0}^{\Ncerto}s_{k+1}(t)\sum_{i=0}^{\infty}\PF_{t+h}^i\Pi_{t+h}\left(H_{f_{t+h}(f_t^k(c))}- H_{f_{t}(f_t^k(c))}\right)\\
=&S_{11}+S_{12}.
\end{align*}
\noindent Where
\begin{align*}
S_{11}=  \sum_{k=0}^{\Ncerto}s_{k+1}(t)\sum_{i=0}^{\Ncerto-k}\frac{1}{h}\PF_{t+h}^i\Pi_{t+h}\left(H_{f_{t+h}(f_t^k(c))}- H_{f_{t}(f_t^k(c))}\right)
\end{align*}
and
\begin{align*}
S_{12}&= \sum_{k=0}^{\Ncerto}s_{k+1}(t)\sum_{i=\Ncerto-k+1}^{\infty}\frac{1}{h}\PF_{t+h}^i\Pi_{t+h}\left(H_{f_{t+h}(f_t^k(c))}- H_{f_{t}(f_t^k(c))}\right)\\
&= \sum_{k=0}^{\Ncerto}s_{k+1}(t)
\frac{1}{h}  \PF_{t+h} \circ (I-\PF_{t+h})^{-1}\circ  \Pi_{t+h} \circ\PF_{t+h}^{\Ncerto-k}  \left(H_{f_{t+h}(f_t^k(c))}- H_{f_{t}(f_t^k(c))}\right).
\end{align*}
\noindent We observe that
\begin{align*}
\norm{S_{12}}_{L^1}\le C \sum_{k=0}^{\Ncerto}|s_{k+1}(t)
| |(I-\PF_{t+h})^{-1}\circ  \Pi_{t+h} \circ\PF_{t+h}^{\Ncerto-k} \frac{1}{h} \left(H_{f_{t+h}(f_t^k(c))}- H_{f_{t}(f_t^k(c))}\right) |_{L^1}.
\end{align*}
Let 
$$
\varphi_k = \frac{1}{h} \PF_{t+h}^{\Ncerto-k} \left(H_{f_{t+h}(f_t^k(c))}- H_{f_{t}(f_t^k(c))}\right),
$$
By Proposition \ref{var_LN3} it follows that 
\begin{align} 
|\varphi_k|_{BV} &= var(\varphi_k) + \norm{\varphi_k}_{L^1}\\
&\le \frac{C}{|h| |Df_{t}^{N_3(t,h)-k}(f^{k+1}_{t}(c))|} + K_1\\
& \leq C \frac{|Df_{t}^{N(t,h)+1}(f_{t}(c))|}{|Df_{t}^{N_3(t,h)-k}(f^{k+1}_{t}(c))|} + K_1\nonumber\\
&\leq C |Df_t^{N(t,h)+1 - N_3(t,h)}(f_t^{N_3(t,h)+1}(c))| | Df_t^{k}(f_t(c)))|+K_1\nonumber \\
&\leq C \Lambda^{N(t,h)+1 - N_3(t,h)+k}+K_1.
\end{align} 
By Lemma \ref{norma_L1} we have 
\begin{align*}
 & |(I-\PF_{t+h})^{-1}\circ  \Pi_{t+h}(\varphi_k) |_{L^1}\\
 &= |(I-\PF_{t+h})^{-1}\circ  \Pi_{t+h} \circ\PF_{t+h}^{\Ncerto-k} \frac{1}{h} \left(H_{f_{t+h}(f_t^k(c))}- H_{f_{t}(f_t^k(c))}\right) |_{L^1}  \\
&\le K'_1 \log (C \Lambda^{N(t,h)+1 - N_3(t,h)+k}+ K_1)  +K'_2\\
&\le K'_1 \log (K_2\Lambda^{N(t,h)+1 - N_3(t,h)+k})  +K'_2\\
&\le K_3(N-N_3+k+1).
\end{align*}

Therefore,
\begin{align*}
\norm{S_{12}}_{L^1}&\le K_3\sum_{k=0}^{\Ncerto}|s_{k+1}(t)|(N -\Ncerto+k+1)\\
&\le K_3(N-\Ncerto)\sum_{k=0}^{\Ncerto}\frac{1}{\lambda^k} + K_3\left(\sum_{k=0}^{\Ncerto}\frac{k}{\lambda^k} +\sum_{k=0}^{\Ncerto}\frac{1}{\lambda^k} \right)\\
&\le K_4\mathcal{K}\log N  +K_{5} \le K\left(\log\log\frac{1}{|h|} + 1\right).
\end{align*}
We proceed to examine $S_{11}$.
\begin{align*}
S_{11}&= \underbrace{\sum_{k=0}^{\Ncerto}s_{k+1}(t)\sum_{i=0}^{\Ncerto-k}\PF_{t+h}^i \left(\frac{H_{f_{t+h}(f_t^k(c))} - H_{f_{t}(f_t^k(c))}}{h} \right)}_{S_{111}}\\
&\underbrace{-\sum_{k=0}^{\Ncerto}s_{k+1}(t)\sum_{i=0}^{N_3-k}\PF_{t+h}^i \left(\rho_{t+h}\int\frac{ H_{f_{t+h}(f_t^k(c))} - H_{f_{t}(f_t^k(c))}}{h} \ dm \right)}_{S_{112}}.
\end{align*}
Note that
\begin{align*}
S_{112}&=-\sum_{k=0}^{\Ncerto}s_{k+1}(t)\sum_{i=0}^{\Ncerto-k}\rho_{t+h}\int\left(\frac{ H_{f_{t+h}(f_t^k(c))} - H_{f_{t}(f_t^k(c))}}{h}\right) dm \\
&=-\sum_{k=0}^{\Ncerto}s_{k+1}(t)\sum_{i=0}^{\Ncerto-k}\left(v_t(f_t^k(c)) + O(h)\right)\rho_{t+h}.
\end{align*}
Adding and subtracting the sum
$$
\sum_{k=0}^{\Ncerto}s_{k+1}(t)\sum_{i=0}^{\Ncerto-k}v_t(f_t^k(c))\rho_{t},
$$
we obtain
$$
S_{112}=S_{1121}+S_{1122},
$$
where
$$
S_{1121}=-\sum_{k=0}^{\Ncerto}s_{k+1}(t)\sum_{i=0}^{\Ncerto-k}v_t(f_t^k(c))\rho_{t}
$$
and
\begin{align*}
S_{1122}&=-(\rho_{t+h}-\rho_{t}) \sum_{k=0}^{\Ncerto}s_{k+1}(t)(\Ncerto-k)v_t(f_t^k(c))- O(h)\sum_{k=0}^{\Ncerto}s_{k+1}(t)(\Ncerto-k)\rho_{t+h}.
\end{align*}
By Eq. (\ref{densidade_L1})
\begin{align*}
\norm{S_{1122}}_{L^1}&\le K_1\sup_t |v_t||h| \log \big( \frac{1}{|h|}\big) \sum_{k=0}^{\Ncerto}|s_{k+1}(t)|(\Ncerto-k)\\
& + \norm{\rho_{t+h}}_{L^1}|O(h)|\sum_{k=0}^{\Ncerto}|s_{k+1}(t)|(\Ncerto-k)\\
&\le \left(K_2|h|\log \frac{1}{|h|}+K_3|O(h)|\right) \Ncerto\sum_{k=0}^{\Ncerto} \frac{1}{\lambda^k}\\
&\le K_4 N\left(|h|\log \frac{1}{|h|} +|O(h)|\right)\le K\log\frac{1}{|h|} \left(|h|\log \big( \frac{1}{|h|}\big)+|O(h)|\right).
\end{align*}
Therefore, taking $\phi:[0,1] \to \R$ a lipschitzian observable,
\begin{align*}
&\int \phi(x) \mathcal{W}(x) \ dx = \int \phi(x) (I-\PF_{t+h})^{-1}\Phi_h(x)\ dx\\
&=\int \phi(x) (S_{111}+S_{1121}) (x)\ dx + O\left(\log \log \frac{1}{|h|}\right)\\
&=\sum_{k=0}^{\Ncerto}s_{k+1}(t)\sum_{i=0}^{\Ncerto-k}\int \phi(x) \PF_{t+h}^i \left(\frac{H_{f_{t+h}(f_t^k(c))} - H_{f_{t}(f_t^k(c))}}{h} \right) (x)\ dx\\
&-\sum_{k=0}^{\Ncerto}s_{k+1}(t)\sum_{i=0}^{\Ncerto-k}v_t(f_t^k(c))\int \phi(x)\rho_{t}(x) \ dx+ O\left(\log \log \frac{1}{|h|}\right).
\end{align*}
By Eq. (\ref{dirac2}) we have 
\begin{align*}
&\int \phi(x) \PF_{t+h}^i \left(\frac{H_{f_{t+h}(f_t^k(c))} - H_{f_{t}(f_t^k(c))}}{h} \right) (x)\ dx \nonumber \\
&= \phi(f_{t}^{i+k+1}(c))v_{t}(f_t^k(c)) +  O( |Df_{t}^i(f_{t}^{k+1}(c))| |h|) \nonumber \\
&= \phi(f_{t}^{i+k+1}(c))v_{t}(f_t^k(c)) +  O( \frac{|Df_{t}^i(f_{t}^{k+1}(c))|}{|Df_{t}^N(f_{t}(c))|}).
\end{align*}
Since
\begin{align}
 & \big| \sum_{k=0}^{\Ncerto}s_{k+1}(t)\sum_{i=0}^{\Ncerto-k} O( \frac{|Df_{t}^i(f_{t}^{k+1}(c))|}{|Df_{t}^N(f_{t}(c))|}) \big| \nonumber \\
&\le K_1\sum_{k=0}^{\Ncerto}\left(\frac{1}{\lambda}\right)^{k}\sum_{i=0}^{N}\left(\frac{1}{\lambda}\right)^{N-i} < K,
\end{align}
it follows that
\begin{align*}
&\int \phi(x) \mathcal{W}(x) dx\\
&=\sum_{k=0}^{\Ncerto}s_{k+1}(t)v_{t}(f_t^k(c))\sum_{i=0}^{\Ncerto-k}\left(\phi(f_{t}^{i+k+1}(c)) -\int \phi \ d\mu_{t}\right) +  O\left(\log\log \frac{1}{|h|}\right) \\
&= \sum_{k=0}^{\Ncerto}s_{k+1}(t)v_t(f_t^k(c))\sum_{j=k+1}^{\Ncerto+1}\left(\phi(f_t^{j}(c)) - \int \phi \ d\mu_t\right) + O\left(\log\log \frac{1}{|h|}\right)\\
&=\sum_{j=1}^{\Ncerto+1}\left(\phi(f_t^{j}(c))-\int \phi \ d\mu_t\right) \sum_{k=0}^{j-1}s_{k+1}(t)v_t(f_t^k(c))+  O\left(\log\log \frac{1}{|h|}\right).
\end{align*}
Adding and subtracting the series
\begin{align*}
\sum_{j=1}^{\Ncerto+1}\left(\phi(f_t^{j}(c))-\int \phi \ d\mu_t\right) \sum_{k=j}^{\infty}s_{k+1}(t)v_t(f_t^k(c)),
\end{align*}
we obtain
\begin{align*}
\int \phi(x) \mathcal{W}(x) dx&
=\sum_{j=1}^{\Ncerto+1}\left(\phi(f_t^{j}(c))-\int \phi \ d\mu_t\right) \sum_{k=0}^{\infty}\frac{s_1(t)}{Df_t^k(f_t(c))}v_t(f_t^k(c))\\
&\underbrace{-\sum_{j=1}^{\Ncerto+1}\left(\phi(f_t^{j}(c))-\int \phi \ d\mu_t\right) \sum_{k=j}^{\infty}\frac{s_1(t)}{Df_t^k(f_t(c))}v_t(f_t^k(c))}_{I_1}\\ 
&+  O\left(\log \log \frac{1}{|h|}\right).
\end{align*}
Note that $|I_1|< \infty$. Indeed,
\begin{align*}
|I_1|&\le K_1\sum_{j=1}^{\Ncerto+1}\left|\phi(f_t^{j}(c))-\int \phi \ d\mu_t\right| \sum_{k=j}^{\infty}\left(\frac{1}{\lambda}\right)^k \\
&\le K_2\sum_{j=1}^{\Ncerto+1}\left(\frac{1}{\lambda}\right)^{j}\le K.
\end{align*}
Therefore,
\begin{align}
\int \phi(x) \mathcal{W}(x) dx &=  s_1(t)J(f_t,v_t)\sum_{j=1}^{\Ncerto+1}\left(\phi(f_t^{j}(c))-\int \phi \ d\mu_t\right) +O\left(\log\log \frac{1}{|h|}\right)\nonumber \\
&= s_1(t)J(f_t,v_t)\sum_{j=0}^{\Ncerto}\left(\phi(f_t^{j}(c))-\int \phi \ d\mu_t\right) +O\left(\log\log \frac{1}{|h|}\right).
\end{align}
\end{proof}

%----------------------------------------------%
%                     Estimates for the tame part                     % 
%----------------------------------------------%

\section{Estimates  for the Tame part}\label{sec_tame}

Let $\nu$ be a signed, finite and borelian   measure  on $[0,1]$. Denote by  $|\nu|$ the  variation measure of $\nu$ and by  $||\nu||$ the total variation of $\nu$. Define the push-forward of  $\nu$ by $f_t$ as  the borelian measure 
$$(f_t^\star \nu)(A)= \nu(f^{-1}_t(A)).$$
Note that for every bounded  borelian function $g\colon[0,1]\rightarrow \mathbb{R}$
$$\int g   \ d(f_t^\star \nu)  = \int  g\circ f_t \ d\nu.$$
It is also easy to see that 
$$|f^\star_{t}\nu|\le f^\star_{t}|\nu|.$$
Suppose that   $\nu$ has the form
\begin{equation}\label{typem}  \nu =   \pi m  + \sum_{x \in \hat{\Delta}} q_x \delta_x,\end{equation} 
where $\pi \in L^\infty(m)$ with support on $[0,1]$, $m$ is the Lebesgue measure, $\hat{\Delta} \subset [0,1]$ is a countable subset, $q_x \in \mathbb{R}$, with 
$$\sum_{x \in \hat{\Delta}} |q_x| < \infty,$$
and $\delta_x$ is the Dirac measure supported on $\{x\}$. 
Then 
$$|\nu| =  |\pi| m  + \sum_{x \in \hat{\Delta}} |q_x| \delta_x,$$
$$||\nu||= |\pi|_{L^1(m)} + \sum_{x \in \hat{\Delta}} |q_x|.$$
Furthermore, $f_t^\star \nu$ has the form
$$f_t^\star \nu = \mathcal{L}_t(\pi) m + \sum_{x \in \hat{\Delta}} q_x \delta_{f_t(x)}. $$

\begin{proposition}\label{lemma_delta} Let $f_t$ be a $C^1$ family of $C^1$ piecewise expanding unimodal maps. Let $\nu$ be a signed, finite and borelian   measure.  Let $\psi_t: [0,1] \mapsto \mathbb{R}$, $t\in [0,1]$ be such that 
$\psi_t \in L^{\infty}(\nu)$ and $t \rightarrow \psi_t$ is a lipschitzian function with respect to  the $L^{\infty}(|\nu|)$ norm, that is, there exists $L$ such that for all $t, h$ we have 
$$|\psi_{t+h}-\psi_t|_{L^\infty(\nu)}\leq L  |h|.$$
Define
$$
\Delta_{t,h}(x) =\int_0^x  \ df_{t+h}^\star (\psi_{t+h} \nu)  -  \int_0^x \ df_{t}^\star (\psi_{t} \nu).
$$
Then there exist positive constants $K_1, K_2$ such that 
$$
|\Delta_{t,h}|_{L^1(m)} \leq (L+K_1K_2) ||\nu|| |h|
$$
for all $t \in [0,1]$, $h$, where 
$$K_1=\sup_{t} |\psi_t|_{L^\infty(\nu)} \text {and } K_2= \sup_{t,x} |\partial_t f_t(x)|.$$
\end{proposition}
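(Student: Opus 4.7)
The plan is to rewrite $\Delta_{t,h}(x)$ as a single integral against $d\nu(z)$ by unwinding the definition of the push-forward. Using $\int g\,d(f_s^\star \nu)=\int g\circ f_s\,d\nu$ with $g=\fc_{[0,x]}$, we obtain
$$
\Delta_{t,h}(x)=\int\bigl[\fc_{[0,x]}(f_{t+h}(z))\psi_{t+h}(z)-\fc_{[0,x]}(f_t(z))\psi_t(z)\bigr]\,d\nu(z).
$$
Adding and subtracting $\fc_{[0,x]}(f_{t+h}(z))\psi_t(z)$ inside the bracket splits $\Delta_{t,h}(x)=A(x)+B(x)$, where $A$ isolates the variation of the weight $\psi_t$ in $t$ and $B$ isolates the displacement of the argument of the indicator.

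For $A$, I would bring the absolute value inside, pass from $\nu$ to $|\nu|$, and apply Fubini to swap the order of integration:
$$
|A|_{L^1(m)}\le \int\Bigl(\int_0^1 \fc_{[0,x]}(f_{t+h}(z))\,dx\Bigr)\,|\psi_{t+h}(z)-\psi_t(z)|\,d|\nu|(z).
$$
The inner integral equals $1-f_{t+h}(z)\le 1$, and the Lipschitz hypothesis $|\psi_{t+h}-\psi_t|_{L^\infty(|\nu|)}\le L|h|$ dominates the remaining integrand, so $|A|_{L^1(m)}\le L\,|h|\,\|\nu\|$.

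For $B$, the key observation is that $|\fc_{[0,x]}(f_{t+h}(z))-\fc_{[0,x]}(f_t(z))|$ equals $1$ exactly when $x$ lies strictly between $f_t(z)$ and $f_{t+h}(z)$, and is zero otherwise. Hence by Fubini,
$$
\int_0^1 |\fc_{[0,x]}(f_{t+h}(z))-\fc_{[0,x]}(f_t(z))|\,dx = |f_{t+h}(z)-f_t(z)|\le K_2\,|h|,
$$
by the mean value theorem applied to the $C^1$ family $(t,x)\mapsto f_t(x)$. Bounding $|\psi_t(z)|$ by $K_1$ uniformly and integrating $d|\nu|$ gives $|B|_{L^1(m)}\le K_1K_2\,|h|\,\|\nu\|$. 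Adding the estimates for $A$ and $B$ yields the desired bound $|\Delta_{t,h}|_{L^1(m)}\le (L+K_1K_2)\,\|\nu\|\,|h|$. The argument is mostly bookkeeping; the only real point of care is justifying the Fubini exchange, which works because $|\nu|$ is finite and all integrands are bounded.
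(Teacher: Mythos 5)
Your proof is correct and follows essentially the same route as the paper: the same add-and-subtract decomposition into a term capturing the variation of $\psi_t$ (the paper's $\Delta_1$, your $A$) and a term capturing the displacement of the argument (the paper's $\Delta_2$, your $B$), with the displacement term handled by Fubini and the mean value bound $|f_{t+h}(z)-f_t(z)|\le K_2|h|$. The only cosmetic difference is that for the first term the paper bounds it pointwise in $x$ and then integrates, whereas you apply Fubini there as well; both give the same estimate.
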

\proof
Observe that 
\begin{align*}
\Delta_{t,h}(x)&=\int_0^x  \ df_{t+h}^\star (\psi_{t+h} \nu)  -  \int_0^x \ df_{t}^\star (\psi_{t} \nu)  \\
&= \underbrace{\int_0^x  \ df_{t+h}^\star (\psi_{t+h} \nu)  -  \int_0^x \ df_{t+h}^\star (\psi_{t} \nu) }_{\Delta_1}\\
&+\underbrace{\int_0^x  \ df_{t+h}^\star (\psi_{t} \nu)  -  \int_0^x \ df_{t}^\star (\psi_{t} \nu) }_{\Delta_2}.
\end{align*}
Therefore, 
$$
|\Delta_{t,h}(x)| \le |\Delta_1(x)| +|\Delta_2(x)|.
$$
We first estimate $\Delta_1$.
\begin{align*}
|\Delta_1(x)|& \le \int  \fc_{[0,x]}  \ d|f_{t+h}^\star (\psi_{t+h} \nu - \psi_{t} \nu )|\leq   \int  \fc_{[0,x]}  \ d(f_{t+h}^\star (|\psi_{t+h}  - \psi_{t}|| \nu| ))  \\
&\le  \int  \fc_{[0,x]}\circ f_{t+h}   |\psi_{t+h} - \psi_{t} | \ d|\nu|  \le | \psi_{t+h}- \psi_{t} |_{L^\infty(\nu)} ||\nu|| \leq L ||\nu||  |h|.
\end{align*}
In particular 
$$|\Delta_1|_{L^1(m)}\leq L ||\nu||  |h|.$$
We now estimate $\Delta_2$.
\begin{align*}
\Delta_2(x)& =  \int  \fc_{[0,x]} \ df_{t+h}^\star (\psi_{t} \nu)   - \int \fc_{[0,x]} \ df_{t}^\star (\psi_{t} \nu) \\
&= \int  \fc_{[0,x]}\circ f_{t+h}   \ d(\psi_{t} \nu) - \int  \fc_{[0,x]}\circ f_{t} d(\psi_{t} \nu) \\
&= \int  ( \fc_{f_{t+h}^{-1}([0,x])} -\fc_{f_{t}^{-1}([0,x])} ) \ d(\psi_{t} \nu).
\end{align*}
Therefore,
$$|\Delta_2(x)|\leq \int  | \fc_{f_{t+h}^{-1}([0,x])} -\fc_{f_{t}^{-1}([0,x])} | |\psi_{t} |\ d|\nu| \leq K_1   \int  | \fc_{f_{t+h}^{-1}([0,x])} -\fc_{f_{t}^{-1}([0,x])} | \ d|\nu| $$
where
$$K_1=\sup_{t} |\psi_t|_{L^\infty(\nu)}.$$
By the Fubini's Theorem
\begin{eqnarray}
 |\Delta_2|_{L^1(m)} &\leq& K_1 \int  \int  | \fc_{f_{t+h}^{-1}([0,x])}(y) -\fc_{f_{t}^{-1}([0,x])}(y) | \ d|\nu|(y) \ dm(x) \nonumber \\
                                 &\leq& K_1 \int  \int  | \fc_{f_{t+h}^{-1}([0,x])}(y) -\fc_{f_{t}^{-1}([0,x])}(y) |  \ dm(x) \ d|\nu|(y)
\end{eqnarray} 
Note that
$$ | \fc_{f_{t+h}^{-1}([0,x])}(y) -\fc_{f_{t}^{-1}([0,x])}(y) |= \fc_{U_y}(x),$$
where 
$$U_y = \{ x \in [0,1] \colon     f_{t+h}(y)<    x \leq  f_t(y)   \text { or } f_{t}(y)<   x \leq  f_{t+h}(y) \}.$$
Observe that
$$m(U_y)=| f_{t+h}(y)- f_t(y)|\leq K_2  |h|.$$
Thus,
\begin{eqnarray}  |\Delta_2|_{L^1(m)}&\leq& K_1 \int  \int  \fc_{U_y}(x)  \ dm(x) \ d|\nu|(y) \nonumber \\
&\leq&K_1K_2  ||\nu|| |h|.
 \end{eqnarray} 
\endproof

\begin{remark} To avoid a cumbersome notation, in the Proof of Proposition \ref{parteB} we will use the following notation. Whenever we take the supremum over all $t \in [0,1]$ we actually take the supremum over all $t \in [0,1]$ such that $f_t$ do not have a periodic critical point. And whenever  we take the supremum over all $h \neq 0$ we indeed mean taking the supremum over all $h\neq 0$ such that $0< |h| < \delta$, where $\delta > 0$ is given by Definition \ref{goodf}. 

\end{remark}

\begin{proof}[Proof of Proposition \ref{parteB}] 
We first examine
$$
\frac{1}{h}(\PF_{t+h} \rho_t -\PF_{t} \rho_t).
$$
As we have seen, the density $\rho_t$ can be decomposed as 
$$\rho_t = (\rho_t)_{abs} + (\rho_t)_{sal}.$$
We also have $\PF_{t+h}\rho_t \in BV$ and
$$
\PF_{t+h}\rho_t = (\PF_{t+h}\rho_t)_{abs}+(\PF_{t+h}\rho_t)_{sal}.
$$
Therefore,
$$
(\PF_{t+h} \rho_t -\PF_{t} \rho_t) = \left((\PF_{t+h} \rho_t)_{abs} -(\PF_{t} \rho_t)_{abs}\right)+ \left((\PF_{t+h} \rho_t)_{sal} -(\PF_{t} \rho_t)_{sal}\right).
$$
Let us examine the absolutely continuous term
$$
\frac{1}{h}((\PF_{t+h} \rho_t)_{abs} -(\PF_{t} \rho_t)_{abs}).
$$	
Observe that for every $t$
$$(\PF_{t} \rho_t)(x) = (\PF_{t} \rho_t)_{abs}(x) + (\PF_{t} \rho_t)_{sal}(x).$$
Differentiating with respect to $x$,
\begin{align*}
((\PF_{t} \rho_t)_{abs})'(x)&= (\PF_{t} \rho_t)'(x)  \\
&= ((\PF_{t} \rho_t)')_{abs}(x) + ((\PF_{t} \rho_t)')_{sal}(x).
\end{align*}
Then
\begin{align*}
&(\PF_{t} \rho_t)_{abs}(x) = \int_{0}^{x} (\PF_{t} \rho_t)' \ dm.
\end{align*}
Similarly
$$
(\PF_{t} \rho_{t+h})_{abs}(x) = \int_{0}^{x} (\PF_{t+h} \rho_t)'(y) \ dm.
$$
Therefore, 
\begin{align*}
(\PF_{t+h} \rho_t)_{abs}(x) -(\PF_{t} \rho_t)_{abs}(x) &= \int_{0}^{x} (\PF_{t+h} \rho_t)'  -(\PF_{t} \rho_t)' \ dm\\
&= \int_{0}^{x} ((\PF_{t+h} \rho_t)')_{abs}- ((\PF_{t} \rho_t)')_{abs}  \ dm\\
&+ \int_{0}^{x} ((\PF_{t+h} \rho_t)')_{sal} - ((\PF_{t} \rho_t)')_{sal} \ dm.
\end{align*}
We define
\begin{align}\label{eqA}
A_{t,h}(x) = \int_{0}^{x} ((\PF_{t+h} \rho_{t})')_{abs}- ((\PF_{t} \rho_t)')_{abs} \ dm,
\end{align}
and
\begin{align}\label{eqB}
B_{t,h}(x) = \int_{0}^{x} ((\PF_{t+h} \rho_t)')_{sal}  - ((\PF_{t} \rho_t)')_{sal}  \ dm.
\end{align}
Our goal is to prove that
$$
 \sup_{t \in [0,1]} \sup_{h\neq 0} \norm{\frac{A_{t,h}}{h}}_{BV}<\infty  \mbox{\;\;\; and \;\;\;}  \sup_{t \in [0,1]} \sup_{h\neq 0} \norm{\frac{B_{t,h}}{h}}_{BV}<\infty.
$$
Since  $A_{t,h}$ is absolutely continuous, it follows that
$$
\mbox{var}(A_{t,h})  = \int | A'_{t,h} | dm.
$$
Hence, to prove that 
$$
\sup_{t \in [0,1]} \sup_{h\neq 0} \norm{\frac{A_{t,h}}{h}}_{BV}<\infty,
$$  
it is enough to prove that 
\begin{equation} 
\label{esta} \sup_{t \in [0,1]} \sup_{h\neq 0}  \left|\frac{A'_{t,h}}{h}\right|_{L^1(m)} dm<\infty \mbox{\;\;\; and \;\;\;}\sup_{t \in [0,1]} \sup_{h\neq 0}   \norm{\frac{A_{t,h}}{h}}_{L^1(m)}<\infty.
\end{equation} 

\noindent According to Eq. (\ref{eqA}),
$$
A'_{t,h}(x) =(\PF_{t+h} \rho_t)'_{abs}(x) - (\PF_{t} \rho_t)'_{abs}(x).
$$
Differentiating $(\PF_{t+h}\rho_t)'$, we have, for every $h$, 
$$
(((\PF_{t+h} \rho_t)')_{abs})'(x) = (\PF_{t+h} \rho_t)''(x).
$$
for $m$-almost every $x$. In particular 
$$A''_{t,h}(y)= (\PF_{t+h} \rho_t)''(y) - (\PF_{t} \rho_t)''(y),$$
for $m$-almost every $y$. and 
\begin{align}\label{eqA/h}
A'_{t,h}(x) = \int_{0}^{x} (\PF_{t+h} \rho_t)''  - (\PF_{t} \rho_t)'' \ dm.
\end{align}
As we have seen the Ruelle-Perron-Frobenius operator for $f_{t+h}$ is given by
\begin{align}\label{eqPF1}
(\PF_{t+h} \rho_t)(x) = \sum_{f_{t+h}(y)=x}\frac{\rho_t(y)}{|Df_{t+h}(y)|}.
\end{align}
Differentiating the equation (\ref{eqPF1}) with respect to $x$  we obtain
\begin{align}\label{eqPF2}
(\PF_{t+h} \rho_t)'(x) = \sum_{f_{t+h}(y)=x}\frac{\rho_t'(y)}{Df_{t+h}(y)|Df_{t+h}(y)|} - \frac{\rho_t(y)D^2f_{t+h}(y)}{|Df_{t+h}(y)|^3}.
\end{align}
Now, differentiating the equation (\ref{eqPF2}) with respect to $x$  we obtain
\begin{align*}
(\PF_{t+h} \rho_t)''(x) = &\sum_{f_{t+h}(y)=x}\left(\frac{\rho_t''(y)}{|Df_{t+h}(y)||Df_{t+h}(y)|^2} - 3\frac{\rho_t'(y)D^2f_{t+h}(y)}{|Df_{t+h}(y)|Df_{t+h}(y)^3}\right) \nonumber \\
&+ \sum_{f_{t+h}(y)=x}\left(-\frac{\rho_t(y)D^3f_{t+h}(y)}{|Df_{t+h}(y)|Df_{t+h}(y)^3} + 3\frac{\rho_t(y)(D^2f_{t+h}(y))^2}{|Df_{t+h}(y)||Df_{t+h}(y)|^4}\right).
\end{align*}
Observe that we can rewrite $(\PF_{t+h} \rho_t)''$ as follows
\begin{align}\label{PFrewrite}
(\PF_{t+h} \rho_t)''&= \PF_{t+h}\left( \frac{\rho_t''}{|Df_{t+h}|^2}\right) -3 \PF_{t+h}\left(\frac{\rho_t' D^2f_{t+h}}{(Df_{t+h})^3} \right)  \nonumber\\
&-\PF_{t+h}\left( \frac{\rho_t D^3f_{t+h}}{(Df_{t+h})^3}\right) +3\PF_{t+h}\left( \frac{\rho_t (D^2f_{t+h})^2}{|Df_{t+h}|^4}\right).
\end{align}
We obtain a similar expression for $(\PF_{t} \rho_t)''$.

\noindent Substituting Eq. (\ref{PFrewrite}) into Eq. (\ref{eqA/h}) we obtain
\begin{align*}
A'_{t,h}(x)&= \underbrace{    \int_{0}^{x}  \ d f_{t+h}^\star \left( \frac{\rho_t''}{|Df_{t+h}|^2} m \right) -  \int_{0}^{x}   d f_{t}^\star \left( \frac{\rho_t''}{|Df_{t}|^2} m \right)    }_{A_1}\\
&+    \underbrace{\int_{0}^{x}d f_{t+h}^\star\left(\frac{-3\rho_t' D^2f_{t+h}}{(Df_{t+h})^3} m \right)-  \int_{0}^{x} d f_{t}^\star\left(\frac{-3\rho_t' D^2f_{t}}{(Df_{t})^3}m \right)}_{A_2} \\
&+\underbrace{  \int_{0}^{x}  d f_{t+h}^\star \left( \frac{-\rho_t D^3f_{t+h}}{(Df_{t+h})^3} m\right) -  \int_{0}^{x} d f_{t}^\star\left( \frac{-\rho_t D^3f_{t}}{(Df_{t})^3}m \right) }_{A_3}\\
&+\underbrace{ \int_{0}^{x} d f_{t+h}^\star \left( \frac{3\rho_t (D^2f_{t+h})^2}{|Df_{t+h}|^4} m \right) - \int_{0}^{x} d f_{t}^\star \left( \frac{3\rho_t (D^2f_{t})^2}{|Df_{t}|^4} m \right)}_{A_4}  . 
\end{align*}
Observe that $A_i$, $1\le i\le 4$, satisfy the assumptions of Proposition \ref{lemma_delta} and the total variation of each one of the measures that appears above has a upper bound that depends on the constants in Assumption (V) of Definition \ref{goodf}. Therefore,
$$
\sup_{t \in [0,1]}\sup_{h\neq 0}  \left|  \frac{A'_{t,h}}{h}  \right|_{L^1(m)}< \infty
$$
and, consequently
\begin{equation} \label{estd}
\sup_{t \in [0,1]}\sup_{h\neq 0} \mbox{var}\left(\frac{A_{t,h}}{h}\right)= \sup_{t \in [0,1]}\sup_{h\neq 0} \left|\frac{A'_{t,h}}{h}\right|_{L^1(m)} dm < \infty.
\end{equation} 
It remains to  verify that the second part of Eq. (\ref{esta}). Note that 
$$
\norm{\frac{A_{t,h}}{h}}_{L^1}= \int \left| \frac{A_{t,h}}{h}\right| dm = \int \left| \int_0^x \frac{A'_{t,h}(y)}{h} dy \right| dm \le \left| \frac{A'_{t,h}}{h} \right|_{L^1(m)}.
$$
Hence, by Eq. (\ref{estd}), Eq. (\ref{esta}) holds. Hence, we need to show that  
$$\sup_{t \in [0,1]} \sup_{h\neq 0}   \left| \frac{B_{t,h}}{h} \right|_{BV}<\infty.$$
By Eq. (\ref{eqPF2}) and Property (V) in Definition \ref{goodf}  we have
\begin{align*}
&(\PF_{t+h}\rho_t)'_{sal}(x)\\
&= \sum_{k=1}^{\infty}\left( \frac{s_k'(t)H_{{f_{t+h}}(f_t^k(c))}(x)}{Df_{t+h}(f_t^k(c))|Df_{t+h}(f_t^k(c))|} - \frac{s_k(t)H_{{f_{t+h}}(f_t^k(c))}(x)}{|Df_{t+h}(f_t^k(c))|^3}D^2f_{t+h}(f_t^k(c))\right)\\
&+ \left(\frac{\rho_t'(c)}{Df_{t+h}(c-) |Df_{t+h}(c-)|} + \frac{\rho_t'(c)}{Df_{t+h}(c+) |Df_{t+h}(c+)|}\right)H_{{f_{t+h}}(c)}(x)\\
& - \left(\frac{\rho_t(c)D^2f_{t+h}(c-)}{|Df_{t+h}(c-)|^3} + \frac{\rho_t(c)D^2f_{t+h}(c+)}{|Df_{t+h}(c+)|^3}\right)H_{{f_{t+h}}(c)}(x).
\end{align*}
Since for every $ a \in [0,1]$ we have 
$$H_a(x)= \int_{0}^x \ d(-\delta_a),$$
we can write 
\begin{align*}
B_{t,h}(x)= \int_{0}^x \sum_{i=1}^{4}B_i(y) \ dm(y),
\end{align*}
with functions $B_i$ given by 
\begin{align*}
B_1(x)&= \int_{0}^{x}   \ df_{t+h}^\star\left(     \frac{1}{Df_{t+h} |Df_{t+h}|}   \nu_1  \right) - \int_{0}^{x}  df_{t}^\star\left(     \frac{1}{Df_{t} |Df_{t}|}   \nu_1  \right) 
\end{align*}
where 
$$\nu_1= \sum_{k=1}^{\infty} s_k'(t) (-\delta_{f_t^k(c)}),$$
\begin{align*}
B_2(x)=-\int_{0}^{x}   \ df_{t+h}^\star\left(     \frac{D^2f_{t+h} }{|Df_{t+h}|^3}   \nu_2  \right) + \int_{0}^{x}  df_{t}^\star\left(      \frac{D^2f_{t} }{|Df_{t}|^3}   \nu_2  \right) ,
\end{align*}
where
$$\nu_2= \sum_{k=1}^{\infty} s_k(t) (-\delta_{f_t^k(c)}).$$
Let  $\hat{\psi}$ be the constant borelian function $\hat{\psi} \colon [0,1] \rightarrow \mathbb{R}$ given by 
$$\hat{\psi}_t(y)= \frac{1}{Df_{t}(c-) |Df_{t}(c-)|} + \frac{1}{Df_{t}(c+) |Df_{t}(c+)|}.$$
Then 
\begin{align*}
B_3(x)&=\int_{0}^{x} \ df_{t+h}^\star   ( \hat{\psi}_{t+h} \nu_3) - \int_{0}^{x} \  df_{t}^\star (\hat{\psi}_t \nu_3).
\end{align*}
where
$$\nu_3 = -\rho_t'(c)\delta_c.$$

Let $\tilde{\psi}$ be the constant borelian function  $\tilde{\psi}\colon [0,1] \rightarrow \mathbb{R}$ given by 
$$\tilde{\psi}_t(y)=\frac{D^2f_{t}(c-)}{|Df_{t}(c-)|^3}  +  \frac{D^2f_{t}(c+)}{|Df_{t}(c+)|^3}.$$
then 
\begin{align*}
B_4(x)&=-\int_{0}^{x} \ df_{t+h}^\star   ( \tilde{\psi}_{t+h} \nu_4) + \int_{0}^{x} \  df_{t}^\star (\tilde{\psi}_t \nu_4).
\end{align*}
Here
$$\nu_4 = -\rho_t(c)\delta_c.$$
We can apply Proposition \ref{lemma_delta} on  each one of the pairs $(B_i,\nu_i)$. Moreover, by  property  (V)  of Definition \ref{goodf} there is a upper bound for  the total variation   of the measures $\nu_i$, $i=1,2,3,4,$ that holds for every $t \in [0,1]$. Hence, 
$$\sup_{t \in [0,1]} \sup_{h\neq 0} \left|\frac{B_i}{h} \right|_{L^1(m)} <\infty, $$ 
and consequently 
$$\sup_{t \in [0,1]} \sup_{h\neq 0} \mbox{var} \left(\frac{B_{t,h}}{h} \right) <\infty. $$  
Since
$$
\norm{\frac{B_{t,h}}{h}}_{L^1}= \int \left| \frac{B_{t,h}}{h}\right| dm = \int \left| \int_0^x \sum_{i=1}^{4}\frac{B_i(y)}{h} \ dy \right| dm \le \sum_{i=1}^4 \big| \frac{B_i}{h}\big|_{L^1(m)} ,
$$
we obtain
$$\sup_{t \in [0,1]} \sup_{h\neq 0}  \left|\frac{B_{t,h}}{h}\right|_{BV} <\infty.$$  
Therefore,
\begin{align*}
\sup_{t\in[0,1]} \sup_{h\neq 0} \norm{\frac{(\PF_{t+h} \rho_t)_{abs} -(\PF_{t} \rho_t)_{abs}}{h} }_{BV}<   \infty.
\end{align*}

\noindent It remains to examine the saltus.
\begin{align*}
&\frac{(\PF_{t+h}\rho_t)_{sal} - (\PF_{t}\rho_t)_{sal}}{h}\\ 
&=\underbrace{\frac{1}{h}\sum_{k=1}^{\infty}\left( \frac{s_k(t)}{Df_{t+h}(f_t^k(c))}H_{f_{t+h}(f_t^k(c))}- \frac{s_k(t)}{Df_{t}(f_t^k(c))}H_{f_{t}(f_t^k(c))}\right)}_{\tilde{S}_1}\\
&+\frac{1}{h}\Bigg(\left( \frac{\rho_t(c)}{|Df_{t+h}(c-)|} + \frac{\rho_t(c)}{|Df_{t+h}(c+)|}\right)H_{f_{t+h}(c)} - \left( \frac{\rho_t(c)}{|Df_{t}(c-)|} + \frac{\rho_t(c)}{|Df_{t}(c+)|}\right)H_{f_{t}(c)}\Bigg).
\end{align*}
Let us analyze $\tilde{S}_1$. Notice that 
\begin{align*}
\tilde{S}_1&=\frac{1}{h}\sum_{k=1}^{\infty}\frac{s_k(t)}{Df_{t}(f_t^k(c))}\left(H_{f_{t+h}(f_t^k(c))}- H_{f_{t}(f_t^k(c))}\right)\\
&+\underbrace{\frac{1}{h}\sum_{k=1}^{\infty}\left(\frac{s_k(t)}{Df_{t+h}(f_t^k(c))} - \frac{s_k(t)}{Df_{t}(f_t^k(c))}\right)H_{f_{t+h}(f_t^k(c))}}_{\tilde{S}_{11}}.
\end{align*}
Note that 
\begin{align*}
\norm{\tilde{S}_{11}}_{BV}&\le \frac{1}{|h|}\sum_{k=1}^{\infty}|s_k(t)|\left|\frac{1}{Df_{t+h}(f_t^k(c))} - \frac{1}{Df_{t}(f_t^k(c))}\right|\norm{H_{f_{t+h}(f_t^k(c))}}_{BV}\\
&\le \frac{2}{|h|}\sum_{k=1}^{\infty}|s_k(t)|\frac{|Df_{t+h}(f_t^k(c)) - Df_{t}(f_t^k(c))|}{|Df_{t+h}(f_t^k(c))Df_{t}(f_t^k(c))|} \\
&\le \frac{K_1}{|h|}\sum_{k=1}^{\infty}\frac{1}{|Df_{t}^{k-1}(f_t(c))|}|(\partial_s Df_s(f^k_t(c)))|_{s=\theta_{t,h,k}}||h| \le K.
\end{align*}
Hence, $\sup_h\norm{\tilde{S}_{11}}_{BV}< \infty$. Therefore,
\begin{align*}
&\frac{(\PF_{t+h}\rho_t)_{sal} - (\PF_{t}\rho_t)_{sal}}{h} \\ 
&=\frac{1}{h}\sum_{k=1}^{\infty}\frac{s_k(t)}{Df_{t}(f_t^k(c))}\left(H_{f_{t+h}(f_t^k(c))}- H_{f_{t}(f_t^k(c))}\right)+\tilde{S}_{11}\\
&+\frac{1}{h}\left(\left( \frac{\rho_t(c)}{|Df_{t+h}(c-)|} + \frac{\rho_t(c)}{|Df_{t+h}(c+)|}\right)H_{f_{t+h}(c)} - \left( \frac{\rho_t(c)}{|Df_{t}(c-)|} + \frac{\rho_t(c)}{|Df_{t}(c+)|}\right)H_{f_{t}(c)}\right)\\
&=\underbrace{\frac{1}{h}\sum_{k=0}^{\infty}s_{k+1}(t)\left(H_{f_{t+h}(f_t^k(c))}- H_{f_{t}(f_t^k(c))}\right)}_{\tilde{S}}+\tilde{S}_{11}\\
&+\underbrace{\frac{1}{h}\left(\frac{\rho_t(c)}{|Df_{t+h}(c-)|} - \frac{\rho_t(c)}{|Df_{t}(c-)|}\right)H_{f_{t+h}(c)}}_{\tilde{S}_2}\\
&+ \underbrace{\frac{1}{h}\left(\frac{\rho_t(c)}{|Df_{t+h}(c+)|} - \frac{\rho_t(c)}{|Df_{t}(c+)|}\right)H_{f_{t+h}(c)}}_{\tilde{S}_3}.
\end{align*}
We will analize only $\tilde{S}_2$, the term $\tilde{S}_3$ is analogous.
\begin{align*}
\norm{\tilde{S}_2}_{BV}\le K_1\frac{1}{|h|}\left| \frac{1}{|Df_{t+h}(c-)|} - \frac{1}{|Df_{t}(c-)|}\right|\le \frac{K_2}{|h|}|h| \le K.
\end{align*}
Hence, 
$$\sup_{h\neq 0} \norm{\tilde{S}_2}_{BV}< \infty \text { and } \sup_{h\neq 0} \norm{\tilde{S}_3}_{BV}< \infty.$$
We can write 
\begin{align*}
\frac{\PF_{t+h}(\rho_t)-\PF_{t}(\rho_t)}{h} &= \Pi_{t+h} \left( \frac{\PF_{t+h}(\rho_t)-\PF_{t}(\rho_t)}{h} \right) \\
&= \underbrace{\Pi_{t+h}(\tilde{S})}_{\Phi_h} + \underbrace{\Pi_{t+h} \left( \frac{A}{h} + \frac{B}{h} + \tilde{S}_{11}+\tilde{S}_2+\tilde{S}_3\right)}_{r_h}.
\end{align*}
Therefore 
$$
\int r_h dm =0 \mbox{\;\;and\;\;} \sup_{t \in [0,1]} \sup_{h\neq 0} \norm{r_h}_{BV}<\infty.
$$
This finishes the proof.
\end{proof}

\section{The function $\mathcal{R_{\phi}}$ is not Lipschitz on any subset of positive measure}

We give two interesting and simple  consequences of our main result. They tell us that, under the assumptions of our main result, the function  $\mathcal{R_{\phi}}$ is {\it not} very regular in {\it any} subset of the parameter space with positive Lebesgue measure. This show that there is not way to make $\mathcal{R_{\phi}}$ more regular using  some "parameter exclusion" strategy. 

\begin{cor}\label{cor_main2}
 Under the same assumptions of our main result, for every set $\Omega~\subset~ [a,b]$, with $m(\Omega) > 0$, we have for almost every $t \in \Omega$ 
\begin{equation}\label{ls} 
 \limsup_{h\rightarrow 0+} \frac{\mathcal{R_{\phi}}(t+h)-\mathcal{R_{\phi}}(t)}{h \sqrt{-\log |h|} }\fc_\Omega(t+h)=+\infty
\end{equation}
and 
\begin{equation}\label{li} 
\liminf_{h\rightarrow 0+}    \frac{\mathcal{R_{\phi}}(t+h)-\mathcal{R_{\phi}}(t)}{h \sqrt{-\log |h|} }\fc_\Omega(t+h)=-\infty,
\end{equation}
where $\fc_\Omega$ denotes the indicator function of $\Omega$.
\end{cor}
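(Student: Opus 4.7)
The proof will combine the almost sure invariance principle from condition (IV) of Definition \ref{goodf} with the classical law of iterated logarithm for Brownian motion, and then feed the resulting pointwise asymptotics into the Newton-quotient decomposition of Proposition \ref{prop_eqs}. By Proposition \ref{cut_good} it is enough to work inside a single good transversal sub-family $[c,d]$ and to prove both statements for $\Omega \cap [c,d]$ (whenever this has positive measure), so let me henceforth assume $\Omega \subset [c,d]$. The LIL applied to the Wiener process, transported via the ASIP coupling (the LIL is a tail event depending only on the joint finite-dimensional distributions of the $\xi_i$), yields for Lebesgue-a.e. $t \in [c,d]$
\begin{equation*}
\limsup_{n \to \infty} \frac{1}{\sigma_t\sqrt{2 n \log\log n}} \sum_{j=0}^{n-1}\bigl(\phi(f_t^j(c)) - {\textstyle\int} \phi \, d\mu_t\bigr) = 1, \qquad \liminf_{n\to\infty} = -1.
\end{equation*}

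Let $\Omega^{\star} \subset \Omega$ be the subset of parameters that are simultaneously (i) Lebesgue density points of $\Omega$, (ii) satisfy the LIL above, and (iii) belong to $\Gamma^\delta_{h,h_0}$ for every $0 < h < h^{\star}(t)$ for some $\delta, h_0$ and $h^{\star}(t) > 0$ (using Proposition \ref{main_prop} property D and taking a countable union over shrinking tolerance parameters); then $m(\Omega^{\star}) = m(\Omega)$. Fix $t \in \Omega^{\star}$ and assume $\Psi(t) > 0$ --- its sign is constant on $[c,d]$ by Lemma \ref{inf}, and $\Psi(t) < 0$ is entirely symmetric. From the LIL limsup, pick integers $n_k \to \infty$ with
\begin{equation*}
\sum_{j=0}^{n_k-1}\bigl(\phi(f_t^j(c)) - {\textstyle\int}\phi\, d\mu_t\bigr) \geq \tfrac{1}{2}\sigma_t\sqrt{2 n_k \log\log n_k}.
\end{equation*}
The level set $I_{n_k} := \{h>0 : N(t,h) = n_k\} = [\alpha_k, \beta_k)$ satisfies $\beta_k/\alpha_k \geq \lambda$ by the uniform expansion, so $|I_{n_k}| \geq (1-1/\lambda)\beta_k > 0$. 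Since $t$ is a density point of $\Omega$ and $\beta_k \to 0$, for all large $k$ the set $I_{n_k} \cap (\Omega - t)$ has positive Lebesgue measure; choose $h_k$ in it, so that $t + h_k \in \Omega$, $h_k \to 0$, and $t \in \Gamma^\delta_{h_k, h_0}$.

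For such $h_k$, Proposition \ref{prop_eqs} together with $\sup_h |(I-\mathcal{L}_{t+h})^{-1} r_h|_{L^1} < \infty$ gives
\begin{equation*}
\frac{\mathcal{R_{\phi}}(t+h_k) - \mathcal{R_{\phi}}(t)}{S_t\, J_t\, h_k} = \sum_{j=0}^{N_3(t,h_k)}\bigl(\phi(f_t^j(c)) - {\textstyle\int}\phi\, d\mu_t\bigr) + O\bigl(\log\log \tfrac{1}{|h_k|}\bigr) + O(1).
\end{equation*}
Since $N_3(t,h_k) = n_k - O(\log n_k)$ and $\phi$ is bounded, the Birkhoff sum up to $N_3(t,h_k)$ differs from the one up to $n_k$ by $O(\log n_k)$, which is negligible compared with $\sqrt{n_k \log\log n_k}$. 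Using $-\log|h_k| = L_t n_k + O(\log n_k)$, dividing by $h_k \sqrt{-\log|h_k|}$ yields
\begin{equation*}
\frac{\mathcal{R_{\phi}}(t+h_k) - \mathcal{R_{\phi}}(t)}{h_k\sqrt{-\log|h_k|}} \geq \tfrac{1}{2}\,\Psi(t)\,\sqrt{2\log\log n_k} + o(1) \longrightarrow +\infty.
\end{equation*}
Because $t+h_k \in \Omega$ this verifies \eqref{ls}; the companion relation \eqref{li} is obtained by running the same argument with the liminf branch of the LIL (which, since $\Psi(t) > 0$, produces a sequence along which the Newton quotient tends to $-\infty$). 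The main obstacle is the careful bookkeeping in the last step: one must verify that the $O(\log n_k)$ discrepancy between $N_3(t,h_k)$ and $n_k$, the $O(\log\log\tfrac{1}{|h_k|})$ Wild-part error, and the bounded Tame-part contribution are all absorbed into $o(\sqrt{\log\log n_k})$ after normalization by $\sqrt{-\log|h_k|}$, and that the chosen $h_k \in I_{n_k} \cap (\Omega - t)$ lies in the regime where Proposition \ref{prop_eqs} applies --- which is exactly guaranteed by clause (iii) in the definition of $\Omega^{\star}$.
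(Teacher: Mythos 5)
Your proof is correct, but it takes a genuinely different route from the paper. The paper proves the corollary by contradiction, \emph{using the CLT of Theorem \ref{main} as a black box}: if the $\limsup$ in \eqref{ls} were bounded by some $K_1$ on a positive-measure subset $S \subset \Omega$, then (after normalizing by the bounded, sign-definite $\Psi$) the empirical distribution function of the normalized Newton quotient over a small interval $I_\epsilon$ around a density point of $S$ would have $\limsup \geq \mathcal{D}_{\mathcal{N}}(K_2+1)+\delta$ at level $K_2+1$, while the CLT forces the limit to be exactly $\mathcal{D}_{\mathcal{N}}(K_2+1) < 1$. The crucial technical device there, which plays the role your density-point choice of $h_k$ plays here, is the elementary translation fact $m(S_\epsilon \cap (S_\epsilon - h)) \to m(S_\epsilon)$, which guarantees $t+h \in \Omega$ for a positive-measure set of $t$, thereby killing the indicator $\fc_\Omega(t+h)$. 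Your argument, by contrast, is a \emph{direct} pointwise one: you transport the LIL for Brownian motion via the ASIP of condition (IV) (this is precisely Schnellmann's LIL in \cite{sch2}), pick witness levels $n_k$ from the $\limsup$/$\liminf$ branches, exploit the uniform relative width $\geq 1 - 1/\lambda$ of the level sets $\{h : N(t,h)=n_k\}$ together with Lebesgue one-sided density to land $t+h_k$ back in $\Omega$, and then feed $h_k$ into the decomposition of Proposition \ref{prop_eqs} to show the quotient actually grows like $\Psi(t)\sqrt{2\log\log n_k}$. This yields a quantitatively sharper pointwise statement than is stated, at the cost of re-opening the machinery (the $\Gamma^\delta$ sets of Proposition \ref{main_prop}, the $N_3$ versus $N$ bookkeeping, the Wild/Tame error bounds) that the paper's proof avoids by quoting Theorem \ref{main}. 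One small imprecision worth noting: the relation should be $-\log|h_k| = L_t n_k(1 + o(1))$ (from Birkhoff's theorem, via the identity in the proof of Proposition \ref{main_step}), not $L_t n_k + O(\log n_k)$; the ergodic theorem only gives $o(n_k)$ for the error in the Lyapunov sum, but since all you need is $\sqrt{-\log|h_k|} \sim \ell_t^{-1}\sqrt{n_k}$ this does not affect the conclusion.
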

\begin{proof}
Due Propostition \ref{cut_good}, it is enough to prove  Corollary \ref{cor_main2} for good transversal families. We are going to prove that Eq. (\ref{ls}) holds for almost every $t \in \Omega$. The proof  that  Eq. (\ref{li}) holds for almost every $t \in \Omega$ is similar.  

If Eq. (\ref{ls}) fails for $t$ in a subset of $\Omega$ with positive Lebesgue measure, then there exist $\hat{\Omega}\subset \Omega$, with $m(\hat{\Omega}) > 0$ and $K_1>0$ such that for every $t \in  \hat{\Omega}$ we have 

$$ \limsup_{h\rightarrow 0+} \frac{\mathcal{R_{\phi}}(t+h)-\mathcal{R_{\phi}}(t)}{h \sqrt{-\log |h|} }\fc_\Omega(t+h) \leq  K_1.$$
Since $f_t$ is a good transversal family, by Lemma \ref{inf} we have that  $\inf_t |\Psi(t)|>  0$  and that $\Psi(t)$ does not changes signs on $t \in [a,b]$, so without loss of generality we can assume $\Psi(t) > 0$ for every $t \in [a,b]$ and $\inf_t \Psi(t) >  0$ (otherwise replace the family $f_t$ by $f_{-t}$). So there exists $K_2>0$ such that 
$$
 \limsup_{h\rightarrow 0+}    \frac{\mathcal{R_{\phi}}(t+h)-\mathcal{R_{\phi}}(t)}{\Psi(t) h \sqrt{-\log |h|} }\fc_\Omega(t+h) \leq K_2
$$
for every $t \in \hat{\Omega}$.  Then there exists $h_0 > 0$ and a set  $S\subset \hat{\Omega}$ with $m(S) > 0$  such that for every $t \in S$ we have
$$ \frac{\mathcal{R_{\phi}}(t+h)-\mathcal{R_{\phi}}(t)}{\Psi(t)  h \sqrt{-\log |h|} }\fc_\Omega(t+h) \leq K_2+1$$
for every $h$ satisfying $0< h\leq h_0$.
Let $t_0\in (a,b)$ be a Lebesgue density point of $S$. Choose $\delta > 0$ such that 
$$ \mathcal{D}_{\mathcal{N}}(K_2+1)+\delta < 1.$$
Then for every $\epsilon> 0$ small enough,
$$\frac{m( S \cap I_\epsilon)}{m(I_\epsilon)}  > \mathcal{D}_{\mathcal{N}}(K_2+1)+\delta,$$
where $I_\epsilon=[t_0-\epsilon,t_0+\epsilon]$.  Let $S_\epsilon=S \cap I_\epsilon.$ It is a well-known fact that if $$S_\epsilon-h=\{ t-h\colon t \in S_\epsilon\}$$ then
$$\lim_{h\rightarrow 0} m(S_\epsilon\cap(S_\epsilon-h))=m(S_\epsilon) > 0.$$ 
Note that  for every $ t\in S_\epsilon\cap(S_\epsilon-h)$, we have $t, t+h \in S_\epsilon\subset S\subset \Omega$, then 
$$ \frac{\mathcal{R_{\phi}}(t+h)-\mathcal{R_{\phi}}(t)}{\Psi(t)  h \sqrt{-\log |h|} } \leq K_2+1$$
for every $0< h \leq h_0$. In particular 

\begin{align}  \label{cont}
\limsup_{h\to 0+}  \ &\displaystyle{\frac{1}{m(I_\epsilon)}} \ m\left( t \in I_\epsilon \colon \ \frac{1}{\Psi(t) h \sqrt{-\log |h|}}\frac{\mathcal{R_{\phi}}(t+h)-\mathcal{R_{\phi}}(t)}{h}  \leq  K_2+1 \right)\nonumber \\
&\ge \frac{m(S_\epsilon)}{m(I_\epsilon)} \ge  \mathcal{D}_{\mathcal{N}}(K_2+1)+\delta.
\end{align}
On the other hand the restriction of $f_t$ to the interval  $I_\epsilon$ is a transversal family, then by Theorem \ref{main} we obtain
\begin{align}  \label{cont2}
\lim_{h\to 0+}  \ &\displaystyle{\frac{1}{m(I_\epsilon)}}\ m\left( t \in I_\epsilon \colon \ \frac{1}{\Psi(t) h \sqrt{-\log |h|}}\frac{\mathcal{R_{\phi}}(t+h)-\mathcal{R_{\phi}}(t)}{h}  \leq  K_2+1 \right)\nonumber \\
&=  \mathcal{D}_{\mathcal{N}}(K_2+1),\nonumber
\end{align}
which contradicts Eq.(\ref{cont}).
\end{proof}

\begin{proof}[Proof of Corollary \ref{cor_main}]  It follows from Corollary \ref{cor_main2}.\end{proof}

\begin{remark} In Baladi and Smania \cite{bs1}\cite{bs1b} it is  proven that for almost every $t \in [a,b]$ there exists a sequence $h_n \rightarrow 0$ such that 
$$ \frac{\mathcal{R_{\phi}}(t+h_n)-\mathcal{R_{\phi}}(t)}{h_n}$$
is not bounded.  In particular $\mathcal{R_{\phi}}$ is not a lipschitzian  function on the whole interval $[a,b]$. Naturally Corollaries \ref{cor_main2} and \ref{cor_main} do not  follow from this when $\Omega$ is not an interval. 
\end{remark}

\begin{remark} Two weeks before  this work be completed, Fabi\'an Contreras sent us his Ph. D. Thesis \cite{contreras} where he proves  a result sharper than Corollary \ref{cor_main2} when $\Omega=[a,b]$ and $\phi$ is a $C^1$ generic observable.  He proves that for almost every $t \in [a,b]$  the limit 
\begin{equation}  \lim_{h\rightarrow 0^+}    \frac{\mathcal{R_{\phi}}(t+h)-\mathcal{R_{\phi}}(t)}{h \sqrt{|\log h \log \log |\log h||} }\end{equation}
exists and it is non zero. Note again that Corollaries \ref{cor_main2} and \ref{cor_main} do not seem to follow from his result when $\Omega$ is not an interval.  As in our case, the main difficult is  to reduce the problem to Schnellmann's main  result in \cite{sch2}. We are not completely familiar with his methods, but they seems to be quite different from our approach.
\end{remark}

\section*{Acknowledgments}

We would like to thank the anonymous referees for  their valuable comments and suggestions.

%-----------------------------------------------------------------------------------
%                                                              Bibliografia
%-----------------------------------------------------------------------------------%


\begin{thebibliography}{10}

\bibitem{baladi}
V.~Baladi.
\newblock On the susceptibility function of piecewise expanding interval maps.
\newblock {\em Comm. Math. Phys.}, 275(3):839--859, 2007.

\bibitem{bs1}
V.~Baladi and D.~Smania.
\newblock Linear response formula for piecewise expanding unimodal maps.
\newblock {\em Nonlinearity}, 21(4):677--711, 2008.

\bibitem{bs2}
V.~Baladi and D.~Smania.
\newblock Smooth deformations of piecewise expanding unimodal maps.
\newblock {\em Discrete Contin. Dyn. Syst.}, 23(3):685--703, 2009.

\bibitem{bs3}
V.~Baladi and D.~Smania.
\newblock Alternative proofs of linear response for piecewise expanding
  unimodal maps.
\newblock {\em Ergodic Theory Dynam. Systems}, 30(1):1--20, 2010.

\bibitem{bs1b}
V.~Baladi and D.~Smania.
\newblock Corrigendum: {L}inear response formula for piecewise expanding
  unimodal maps.
\newblock {\em Nonlinearity}, 25(7):2203--2205, 2012.

\bibitem{bil}
P.~Billingsley.
\newblock {\em Convergence of probability measures}.
\newblock Wiley Series in Probability and Statistics: Probability and
  Statistics. John Wiley \& Sons, Inc., New York, second edition, 1999.
\newblock A Wiley-Interscience Publication.

\bibitem{contreras}
F.~Contreras.
\newblock {\em Regularity of absolutely continuous invariant measures for
  piecewise expanding unimodal maps}.
\newblock Ph.D's Thesis. University of Maryland, 2015.

\bibitem{w}
I.~Karatzas and S.~E. Shreve.
\newblock {\em Brownian motion and stochastic calculus}, volume 113 of {\em
  Graduate Texts in Mathematics}.
\newblock Springer-Verlag, New York, second edition, 1991.

\bibitem{KL3}
G.~Keller.
\newblock Stochastic stability in some chaotic dynamical systems.
\newblock {\em Monatsh. Math.}, 94(4):313--333, 1982.

\bibitem{kellerg}
G.~Keller.
\newblock Generalized bounded variation and applications to piecewise monotonic
  transformations.
\newblock {\em Z. Wahrsch. Verw. Gebiete}, 69(3):461--478, 1985.

\bibitem{kpk}
G.~Keller, P.~J. Howard, and R.~Klages.
\newblock Continuity properties of transport coefficients in simple maps.
\newblock {\em Nonlinearity}, 21(8):1719--1743, 2008.

\bibitem{lasotayorke}
A.~Lasota and J.~A. Yorke.
\newblock On the existence of invariant measures for piecewise monotonic
  transformations.
\newblock {\em Trans. Amer. Math. Soc.}, 186:481--488 (1974), 1973.

\bibitem{ls}
R.~Leplaideur and B.~Saussol.
\newblock Central limit theorem for dimension of {G}ibbs measures in hyperbolic
  dynamics.
\newblock {\em Stoch. Dyn.}, 12(2):1150019, 24, 2012.

\bibitem{mazzo}
M.~Mazzolena.
\newblock {\em Dinamiche espansive unidimensionali: dipendenza della misura
  invariante da un parametro}.
\newblock Master's Thesis Roma 2, Tor Vergata, 2007.

\bibitem{philipp}
W.~Philipp and W.~Stout.
\newblock Almost sure invariance principles for partial sums of weakly
  dependent random variables.
\newblock {\em Mem. Amer. Math. Soc. 2}, (issue 2, 161):iv+140, 1975.

\bibitem{ruelle1}
D.~Ruelle.
\newblock Differentiation of {SRB} states.
\newblock {\em Comm. Math. Phys.}, 187(1):227--241, 1997.

\bibitem{ruelle2}
D.~Ruelle.
\newblock Nonequilibrium statistical mechanics near equilibrium: computing
  higher-order terms.
\newblock {\em Nonlinearity}, 11(1):5--18, 1998.

\bibitem{sch1}
D.~Schnellmann.
\newblock Typical points for one-parameter families of piecewise expanding maps
  of the interval.
\newblock {\em Discrete Contin. Dyn. Syst.}, 31(3):877--911, 2011.

\bibitem{sch2}
D.~Schnellmann.
\newblock Law of iterated logarithm and invariance principle for one-parameter
  families of interval maps.
\newblock {\em Probability Theory and Related Fields}, pages 1--45, 2014.

\bibitem{Tsujii}
M.~Tsujii.
\newblock A simple proof for monotonicity of entropy in the quadratic family.
\newblock {\em Ergodic Theory Dynam. Systems}, 20(3):925--933, 2000.

\bibitem{viana}
M.~Viana.
\newblock {\em Sthochastic dynamics of deterministic systems}.
\newblock Lecture notes XXI Col{\'o}quio Brasileiro de Matem{\'a}tica, IMPA,
  Rio de Janeiro, 1997.

\end{thebibliography}
\end{document}